\pdfoutput=1
\RequirePackage{silence}
\documentclass[a4paper,11pt]{amsart}
\usepackage[hmarginratio={1:1},vmarginratio={1:1},lmargin=60.0pt,tmargin=60.0pt]{geometry}

\allowdisplaybreaks

\usepackage[numbers]{natbib}
\usepackage[utf8]{inputenc}

\usepackage{comment}

\usepackage{latexsym,exscale,mathtools,textcomp}
\usepackage{amssymb,amsmath,amsthm,amsfonts,mathrsfs,bbm,enumitem,stmaryrd}
\usepackage{blkarray}
\usepackage[table]{xcolor}
\usepackage{graphicx}
\usepackage{mathtools}
\usepackage{ytableau}
\usepackage{booktabs}

\usepackage{xparse}

\usepackage{dynkin-diagrams}

\setlist[enumerate]{itemsep=0.05cm,label=\emph{\upshape(\alph*)}}
\setlist[enumerate,2]{itemsep=0.05cm,label=\emph{\upshape(\roman*)}}

\usepackage{array}
\newcolumntype{C}{>{$}c<{$}}

\definecolor{mygray}{gray}{0.6}
\definecolor{mygraydark}{gray}{0.4}
\definecolor{mygraylight}{gray}{0.85}
\definecolor{spinach}{RGB}{46,139,87}
\definecolor{tomato}{RGB}{255,99,71}
\definecolor{orchid}{RGB}{143,40,194}
\definecolor{neon}{RGB}{77,77,255}
\definecolor{pumpkin}{RGB}{224,180,80}
\definecolor{citron}{RGB}{190,180,90}

\definecolor{lava}{RGB}{207,16,32}
\definecolor{cream}{RGB}{255,253,208}
\definecolor{verdigris}{RGB}{67,179,174}
\definecolor{Black}{RGB}{0,0,0}
\definecolor{mydarkblue}{RGB}{10,10,170}
\definecolor{darkspinach}{RGB}{20,70,20}
\definecolor{darktomato}{RGB}{155,40,30}
\definecolor{darkorchid}{RGB}{50,10,100}
\definecolor{darklava}{RGB}{150,8,16}

\usepackage{todonotes}

\let\emph\relax
\DeclareTextFontCommand{\emph}{\bfseries\em}

\newcommand{\placeholder}{{}_{-}}

\let\<=\langle
\let\>=\rangle

\renewcommand{\dots}{\text{...}}

\DeclarePairedDelimiterX{\set}[1]{\{}{\}}{\setargs{#1}}
\NewDocumentCommand{\setargs}{>{\SplitArgument{1}{|}}m}{\setargsaux#1}
\NewDocumentCommand{\setargsaux}{mm}
{\IfNoValueTF{#2}{#1} {#1\,\delimsize|\,\mathopen{}#2}}

\newcommand{\C}{\mathbb{C}}
\newcommand{\R}{\mathbb{R}}
\newcommand{\N}{\mathbb{Z}_{\geq 0}}

\newcommand{\Z}{\mathbb{Z}}

\renewcommand{\dim}[1][\C]{\mathrm{dim}_{#1}}

\newcommand{\ra}{\rightarrow}
\newcommand{\Atil}{\widetilde A_{2}}
\newcommand{\Aone}{\widetilde A_{1}}
\newcommand{\Hecke}{\mc H}

\newcommand{\wt}{\widetilde}

\newcommand{\nuop}{\nu}
\newcommand{\Rep}{\mathbf{Rep}\,}
\newcommand{\mc}{\mathcal}

\usepackage[all]{xy}
\usepackage{tikz}
\usetikzlibrary{cd}
\usetikzlibrary{decorations}
\usetikzlibrary{decorations.markings}
\usetikzlibrary{decorations.pathreplacing}
\usetikzlibrary{decorations.pathmorphing}
\usetikzlibrary{arrows.meta,shapes,positioning,matrix,calc}
\usetikzlibrary{shapes.callouts}
\usetikzlibrary{shapes.arrows}
\usetikzlibrary{tqft}
\usetikzlibrary{scopes,intersections}

\tikzset{
anchorbase/.style={baseline={([yshift=#1]current bounding box.center)}},
}

\let\oldlightning\lightning
\renewcommand{\lightning}{\textcolor{tomato}{\pmb{\oldlightning}}}

\usepackage{aliascnt,etoolbox}
\def\NewTheorem#1{%
\newaliascnt{#1}{equation}%
\newtheorem{#1}[#1]{#1}%
\aliascntresetthe{#1}%
\expandafter\def\csname #1autorefname\endcsname{#1}%
}
\def\equationautorefname~#1\null{(#1)\null}

\numberwithin{equation}{subsection}

\NewTheorem{Proposition}
\NewTheorem{Theorem}
\NewTheorem{Corollary}
\AtEndEnvironment{Corollary}{\null\hfill$\square$}%
\NewTheorem{Lemma}
\theoremstyle{definition}
\NewTheorem{Definition}
\AtEndEnvironment{Definition}{\null\hfill$\Diamond$}%
\NewTheorem{Notation}
\AtEndEnvironment{Notation}{\null\hfill$\Diamond$}%
\NewTheorem{Example}
\AtEndEnvironment{Example}{\null\hfill$\Diamond$}%
\NewTheorem{Question}
\AtEndEnvironment{Question}{\null\hfill$\Diamond$}%
\theoremstyle{remark}
\NewTheorem{Remark}
\AtEndEnvironment{Remark}{\null\hfill$\Diamond$}%

\usepackage[hypertexnames=false]{hyperref}
\usepackage{bookmark}
\hypersetup{
pdftoolbar=true,
pdfmenubar=true,
pdffitwindow=false,
pdfstartview={FitH},
pdftitle={Growth in affine Hecke categories},
pdfauthor={Kevin Coulembier, Jensen O'Sullivan, and Daniel Tubbenhauer},
pdfsubject={},
pdfcreator={Kevin Coulembier, Jensen O'Sullivan, and Daniel Tubbenhauer},
pdfproducer={Kevin Coulembier, Jensen O'Sullivan, and Daniel Tubbenhauer},
pdfkeywords={},
pdfnewwindow=true,
colorlinks=true,
linkcolor=mydarkblue,
citecolor=teal,
filecolor=magenta,
urlcolor=orchid,
linkbordercolor=lava,
citebordercolor=teal,
urlbordercolor=orchid,
linktocpage=true
}

\def\makeautorefname#1#2{\csdef{#1autorefname}{#2}}

\makeautorefname{section}{Section}%
\makeautorefname{subsection}{Section}%
\makeautorefname{subsubsection}{Section}%

\begin{document}
\title[Growth in affine Hecke categories]{Growth in affine Hecke categories}
\author[K. Coulembier, J. O'Sullivan, and D. Tubbenhauer]{Kevin Coulembier, Jensen O'Sullivan, and Daniel Tubbenhauer}

\address{K.C.: The University of Sydney, School of Mathematics and Statistics F07, Office Carslaw 717, NSW 2006, Australia, \href{https://www.maths.usyd.edu.au/u/kevinc/}{www.maths.usyd.edu.au/u/kevinc}, \href{https://orcid.org/0000-0003-0996-3965}{ORCID 0000-0003-0996-3965}}
\email{kevin.coulembier@sydney.edu.au}

\address{J.O.: The University of Sydney, School of Mathematics and Statistics F07, NSW 2006, Australia}
\email{J.OSullivan@maths.usyd.edu.au}

\address{D.T.: The University of Sydney, School of Mathematics and Statistics F07, Office Carslaw 827, NSW 2006, Australia, \href{http://www.dtubbenhauer.com}{www.dtubbenhauer.com}, \href{https://orcid.org/0000-0001-7265-5047}{ORCID 0000-0001-7265-5047}}
\email{daniel.tubbenhauer@sydney.edu.au}

\begin{abstract}
This paper studies the asymptotic growth of tensor powers in affine Hecke categories, or equivalently of powers of Kazhdan--Lusztig basis elements in affine Hecke algebras. We prove general bounds in arbitrary affine type, determine precise asymptotics in affine type A1, and establish corresponding results for several natural families in affine type A2.
\end{abstract}

\subjclass[2020]{Primary: 18M05, 41A60; Secondary: 05A16, 20C08}
\keywords{Affine Hecke categories, Kazhdan--Lusztig basis, Soergel bimodules, affine Weyl groups, asymptotic growth, Satake equivalence.}

\addtocontents{toc}{\protect\setcounter{tocdepth}{1}}

\maketitle

\tableofcontents

\section{Introduction}\label{sec:intro}

This paper studies a classical problem in a new asymptotic
setting: how tensor product decompositions grow in affine Hecke categories.

\subsection{Monoidal growth problems}\label{sec:gengrowth}

Tensor product decompositions are one of the oldest bookkeeping problems in
representation theory. Already for \(SL_2(\C)\), the
Clebsch--Gordan rule describes how a tensor product of two simple
representations splits into simples. In another classical direction,
the Littlewood--Richardson rule does the same job for polynomial
representations of general linear groups. These rules are not just
computational devices: they are among the basic ways in which representation
theory turns algebra into combinatorics.
The same problem also appears in physics. Coupling two angular momenta means
decomposing a tensor product of two \(SL_2(\C)\)-representations.
Thus the Clebsch--Gordan coefficients are, simultaneously, structure
constants in representation theory and the coefficients governing the
addition of quantum angular momenta.

From this point of view, tensor products are a classical story. The
asymptotic question is newer in spirit. Instead of decomposing one tensor
product, one fixes an object and asks what happens to its large tensor
powers, as pioneered, for example, in \cite{Bi-asymptotic-lie}. Thus the basic question becomes:
\[
\setlength{\fboxsep}{2pt}
\colorbox{blue!6}{
\begin{minipage}{0.85\textwidth}
\centering
\emph{Given an object \(X\) in a nice enough monoidal category, how many indecomposable
summands occur in \(X^{\otimes n}\), with multiplicity, as
\(n\to\infty\)?}
\end{minipage}}
\]
More concretely, if
\[
X^{\otimes n}\cong \bigoplus_i X_i^{\oplus m_i(n)},
\]
then we want to understand the growth of (the function/sequence)
\[
b_n:=\sum_i m_i(n).
\]
Let $\sim$ denote asymptotically equal. Following \cite{CoEtOsTu-growth-fractal}, in many natural examples:
\[
\setlength{\fboxsep}{2pt}
\colorbox{blue!6}{%
\begin{minipage}{0.85\textwidth}
\[
\text{``scalar-poly-exp''}
\colon
\quad
b_n\sim
\left(\begin{array}{c@{\quad\cdot\quad}c@{\quad\cdot\quad}c}
C & n^\tau & \beta^n \\
\text{\emph{scalar}} & \text{\emph{poly}} & \text{\emph{exp}}
\end{array}\right).
\]
\end{minipage}}
\]
The exponential term \(\beta^n\) is usually easy to determine 
(see e.g. \cite{CoOsTu-growth}): it is often a
dimension, or a Frobenius--Perron dimension. The polynomial correction
\(n^\tau\) is more structural. In many representation-theoretic examples,
\(\tau\) is dictated not by the particular object \(X\), but by the ambient
root system or category. The scalar \(C\) is usually the most delicate part. In some cases, the scalar-poly-exp form needs to be generalized to a slightly weaker expression
\begin{gather}\label{Eq:Theta}
C_1\cdot n^\tau\cdot \beta^n\leq b_n\leq C_2\cdot n^\tau\cdot \beta^n,
\end{gather}
for constants $C_1,C_2>0$, or simply $b_n\in\Theta(n^\tau\cdot\beta^n)$ in Bachmann--Landau notation.

\begin{Example}\label{E:Groups}
Let us work over the field of complex numbers.
Let \(G\) be a connected reductive complex algebraic group, and let \(V\)
be a finite dimensional faithful \(G\)-representation. Write
\[
V^{\otimes n}\cong \bigoplus_{\lambda\in X^+}
L(\lambda)^{\oplus m_\lambda(n)},\quad
b_n=\sum_{\lambda\in X^+}m_\lambda(n).
\]
By \cite{Bi-asymptotic-lie,CEO19}, tensor power asymptotics have the form
\[
b_n\sim C_V\cdot n^{-|\Phi^+|/2}\cdot (\dim V)^n .
\]
Thus the exponential term is the most naive possible one, namely
\((\dim V)^n\). The polynomial correction is root-system data ($|\Phi^+|$ is the number of positive roots):
\[
\tau=-|\Phi^+|/2.
\]
In particular, \(\tau\) is independent of the particular representation
\(V\), as long as \(V\) is large enough to see the whole group (i.e. faithful).

The scalar \(C_V\) is more subtle. It depends on \(V\), but it is still
governed by Lie theory. Indeed, one can express \(C_V\)
as a Gaussian integral over the real Cartan, and, when \(G\) is simple, use
the Macdonald--Mehta--Opdam identity to evaluate this integral in terms of
the root system and one representation dependent variance parameter.
In the rank-one case \(G=SL_2(\C)\), for the simple module
\(V=L(j)\) of dimension $j+1$, this specializes to
\[
b_n\sim
\frac{1}{\sqrt{\pi}}
\left(\frac{j(j+2)}{6}\right)^{-1/2}
\cdot
n^{-1/2}\cdot
(j+1)^n.
\]
(This can also easily proven by hand.)

The case of general groups is similar, which is the key motivation for the slogan ``scalar-poly-exp'' throughout.
\end{Example}

\begin{Remark}
Let us comment briefly on the context surrounding the ``scalar-poly-exp'' Ansatz.
\begin{enumerate}

\item In characteristic zero or finite categories or their analogs, the recent volume of literature is extensive. We refer the reader to \cite{GrTu-growth-diagram, He-growth-finite-groups, HeTu-growth-finite-monoids, HeTu-growth-diagram-monoids, LaPoReSo-growth-qgroup, LuZh-quiver-growth, OST-growth-quantum-groups} for a necessarily selective list of references, with additional citations appearing both above and throughout the main body of this text. Crucially, all of these works, with the sole exception of \cite{GrTu-growth-diagram}, confirm the ``scalar-poly-exp'' Ansatz. Because \cite{GrTu-growth-diagram} establishes the analog of this property for categories with superexponential growth, it is not a true counterexample to the underlying philosophy.

\item Over fields of positive characteristic, the situation is considerably more delicate. Nevertheless, the ``scalar-poly-exp'' property in the sense of \autoref{Eq:Theta} remains valid; see, for example, \cite{CoEtOsTu-growth-fractal, La-char2-story, La-linearly-reductive, She-trivial-summands, She-fractal-tilting-sl2}. We do not discuss positive characteristic effects in the present work.

\end{enumerate}
In line with this body of work, a main goal of this paper is to explore whether the Ansatz also holds in the Hecke categorical setting.
\end{Remark}

\subsection{Hecke categories}\label{sec:affinehecke-defn}

The present paper studies the general question from \autoref{sec:gengrowth} for Hecke categories. Since the relevant combinatorics of this category can be captured entirely by Kazhdan--Lusztig theory for the associated Hecke algebra, we will mostly just work within the latter. We focus mainly on the case of affine Weyl groups. Given the close link of these to representation theory, namely category \(\mathcal O\), tilting modules,
and group representations through the Satake isomorphism, they are a natural setting in which to push
asymptotic questions further.

Let \(W\) be a Coxeter group, and let
\[
\{b_w\mid w\in W\}
\]
be the Kazhdan--Lusztig basis of the Hecke algebra, specialized at \(v=1\).
For fixed \(w\in W\), write
\[
b_w^n=\sum_x a_x(n)b_x,
\]
where, crucially, $a_x(n)\in\N$,
and set
\[
\nu_n=\nu_n(b_w):=\nu(b_w^n):=\sum_x a_x(n).
\]
Categorically, this is the number of indecomposable summands, counted with
multiplicity, in the \(n\)th tensor power of the corresponding
indecomposable Soergel bimodule.

There is also a clear notion of ``dimension'' $\beta$ in the background. Namely, expand
\(b_w\) in the standard basis, sum the coefficients at \(v=1\), and call the
result \(\nu^\delta(b_w)\). Set
\[
\beta=\beta(w):=\nu^\delta(b_w).
\]
It is not difficult to see that $\limsup_{n\to\infty}\sqrt[n]{\nu_n(b_w)}\le\beta(w)$, see \autoref{lem:upper-bound}.
Thus \(\beta^n\) is the natural exponential scale of the problem:

\begin{Question}\label{q:first}
Is it always true that $\limsup_{n\to\infty}\sqrt[n]{\nu_n(b_w)}\le\beta(w)$ is an equality?
\end{Question}

If this is the case, the subsequent main
question concerns the polynomial loss between this easy dimension count and the
actual number of Kazhdan--Lusztig summands.

For $W$ a finite Coxeter group, the answer to \autoref{q:first} is affirmative, and we have a much stronger result:

\begin{Example}\label{ex:finite}
For finite Coxeter groups, the asymptotic $\nu_n\sim \tfrac{1}{|W|}\cdot n^0 \cdot\beta^n$ easily follows from Perron--Frobenius theory, see for instance \cite{LaTuVa-growth-pfdim}.
\end{Example}

The growth problem for arbitrary Coxeter groups seems intractable for the moment. A natural and interesting class of Coxeter groups beyond the finite case is formed by affine Weyl groups. For this case there is at least a natural guess for the subexponential factor.
Based on the results in the current paper we are led to the following concrete question. (Note that any proper parabolic subgroup of an affine Weyl group is a finite Coxeter group, so that the excluded case in \autoref{q:general} simply reduces to \autoref{ex:finite}.)

\begin{Question}\label{q:general}
Let $W$ be an affine Weyl group, with set $\Phi^+$ of positive roots of the associated finite Lie algebra. Is it true that for any $w\in W$ (that is not contained in a proper parabolic subgroup)
\[\nu_n\;\sim\; C_w\cdot n^{-|\Phi^+|/2}\cdot \beta^n,\]
for a constant $C_w$, or failing that
\[ C_w\cdot n^{-|\Phi^+|/2}\cdot \beta^n\;\le\; \nu_n\;\le\; D_w\cdot n^{-|\Phi^+|/2}\cdot \beta^n,\]
for constants $C_w,D_w>0$?
\end{Question}

Let us start with an example (with details in \autoref{sec:cox}):

\begin{Example}\label{E:First}
Our guiding example is the Coxeter element
\[
w=s_1s_2s_3
\]
in affine type \(\widetilde A_2\) (with Coxeter generators $s_1,s_2,s_3$). Here $\beta=8$ and $|\Phi^+|=3$. Computations give
\[
1,\,1,\,3,\,17,\,83,\,472,\,2813,\,17665,\ldots
\]
for the sequence \(\nu_n\). Normalizing this sequence to $a_n=\nu_n/(n^{-3/2}\cdot 8^n)$ gives 
\[
|a_{n+2}-a_{n+1}|<|a_{n+1}-a_n|<0.0001\text{ for }n\in\{20,\dots,30\},
\]
which suggests the scalar-poly-exp form from \autoref{q:general}
\[
\nu_n\sim C\cdot n^{-3/2}\cdot 8^n.
\]
The scalar \(C\) is harder to pin down.
\end{Example}

More generally, our first result (\autoref{sec:generalasymptotics}) says
that, in affine type, the answer is always close to the expected exponential
scale. If \(W\) is irreducible affine, with finite positive roots \(\Phi^+\),
then for every fixed \(w\in W\) in the projective cell (the lowest in Shi's notation \cite{Shi-two-sided-cell-I,Shi-two-sided-cell-II}) there is a constant \(C=C_w>0\) such that
\[
C\cdot n^{-|\Phi^+|}\cdot \beta^n
\leq
\nu_n
\leq
\beta^n
\]
for $n \geq 1$ (\autoref{thm:general-lower-upper}). Thus the exponential part is \(\beta^n\), giving an affirmative answer to~\autoref{q:general} for this case,
and the possible loss is at worst polynomial, with exponent \(-|\Phi^+|\).
The proof of the lower bound uses the large-scale geometry of affine Weyl
groups together with Weyl's dimension formula.

These general bounds are not always sharp. In the best cases the
Hecke categorical problem reduces to an ordinary tensor power problem for
representations of groups, and the method in \autoref{E:Groups} applies. Then the sharper
scalar-poly-exp form is precisely the one in (the stronger version of) \autoref{q:general}.
We also study a slightly larger class of elements which are not quite in
this representation theoretic situation, but become so after applying the
spherical projection. In affine type \(\widetilde A_2\), one of the main
points of the paper is that the same sharper polynomial exponent still
governs these elements, although we can only prove the weaker form in \autoref{q:general}. 

We first treat affine type \(\widetilde A_1\), the \(\mathfrak{sl}_2\) case in \autoref{sec:sl2}. This is
the rank one test case. Here everything can be computed explicitly, and for
every nonfinite element one gets the expression from \autoref{q:general}.

The main concrete case is affine type \(\widetilde A_2\). Here the geometry
can still be drawn. The alcoves fall into three visible regions: the center,
the wall, and the large region beyond the wall (which we call the projective cell). Using the explicit
combinatorics of Libedinsky--Patimo \cite{LiPa}, we show that for elements
starting on the wall, the wall itself eventually contributes only a
negligible part of the answer. The asymptotic growth is controlled by the
projective cell.

\begin{Remark}
That growth is governed by the projective cell mirrors a remarkable result in \cite{BrKo-tensor-group}, whereby a slight extension of the argument implies that eventually almost all summands of a tensor power of a faithful representation of a finite group are projective. Consequently, the growth problem in the finite group case is also controlled by projective modules.
\end{Remark}

For the most representation theoretic elements in affine type
\(\widetilde A_2\), the two-sided longest elements, in \autoref{sec:sl2} we get
\[
\nu(b_w^n)\sim C_w\cdot n^{-3/2}\cdot \beta^n.
\]
For only one-sided longest elements, a uniform beyond-the-wall estimate gives the
matching lower bound, and hence (in Bachmann--Landau notation)
\[
\nu(b_w^n)\in \Theta(n^{-3/2}\cdot \beta^n).
\]
Thus affine types \(\widetilde A_1\) and \(\widetilde A_2\) are the first nontrivial affine cases in
which the expected scalar-poly-exp form can be proved beyond the directly
representation theoretic setting.

The paper is organized as follows. In \autoref{sec:prelim} we recall
the affine Hecke algebra, and the spherical Satake formalism. In \autoref{sec:general} we prove the
general bounds in arbitrary affine type, and record the consequences of
Satake for one-sided and two-sided longest elements. In \autoref{sec:sl2}
we treat affine type \(\widetilde A_1\) explicitly. In \autoref{sec:sl3}
we study affine type \(\widetilde A_2\). In \autoref{S:C2} we discuss affine type \(\widetilde C_2\) experimentally using computer calculations. Finally,
\autoref{S:Details} contains some nasty beyond-the-wall computations used in type
\(\widetilde A_2\).
\medskip

\noindent\textbf{Acknowledgments.}
This paper is part of the second author’s PhD thesis, for which he receives untaxed, yet traceable money from the University of Sydney, in the form of the RTP Stipend Scholarship. Furthermore, this paper is part of the ARC Discovery Project DP250100762. KC was supported by ARC Future Fellowship FT220100125. DT acknowledges support from ARC Future Fellowship FT230100489, and notes that all constants are positive, but this should not be mistaken for optimism.

\section{Preliminaries}\label{sec:prelim}

We collect a few basic facts that we will use throughout. References for the
material below include, in order, \cite{Hu-coxeter},
\cite{EMTW-soergel-intro}, and \cite{Kn-spherical-KL}.

\subsection{Affine Weyl groups}\label{sec:Affineintro}

Let $(W,S)$ be an affine Weyl group (an irreducible Coxeter
system of affine Weyl type), with simple reflection generators
\[
S=\{s_0,s_1,\dots,s_r\},
\]
where $s_0$ is the affine simple reflection. The underlying Coxeter diagram of
an affine Weyl group determines the corresponding affine Dynkin diagram.
Removing the affine node then gives a finite Dynkin diagram. We fix the
corresponding complex semisimple adjoint group $G$. For example, if $W=\tilde{A}_n$, then $G=PGL_{n+1}$.  We choose conventions so
that the translation lattice of $W$ identifies with the root lattice of $G$. 

Let
$Q=\bigoplus_{i=1}^r \mathbb Z\alpha_i$
be this root lattice, and put \(Q_{\mathbb R}=\mathbb R\otimes_{\mathbb Z} Q\).
Let \(\Phi\subset Q_{\mathbb R}\) be the associated root system, and choose
positive roots \(\Phi^+\) with simple roots
$\Delta=\{\alpha_1,\ldots,\alpha_r\}$.
Write \(\alpha^\vee\) for the coroot associated to \(\alpha\in\Phi\), and use
the usual root-coroot pairing
$\langle\placeholder,\placeholder\rangle:Q_{\mathbb R}\times \Phi^\vee\to \mathbb R$. Let \(\omega_1,\ldots,\omega_r\) denote the fundamental weights, so that
$\langle \omega_i,\alpha_j^\vee\rangle=\delta_{ij}$.
The finite Weyl group \(W_f=\langle s_1,\ldots,s_r\rangle\) acts on
\(Q_{\mathbb R}\) by
$s_i(\lambda)=\lambda-\langle \lambda,\alpha_i^\vee\rangle\alpha_i$.
We write
\[
Q^+ = \{\lambda\in Q \mid \langle \lambda,\alpha_i^\vee\rangle\geq 0
\text{ for all } 1\leq i\leq r\}.
\]
With these conventions, the affine Weyl group satisfies
\[
W\cong Q\rtimes W_f.
\]
For $\lambda\in Q$, we write
\[
t_\lambda\in W
\]
for the corresponding translation. Thus elements of $W$ may be regarded as
finite Weyl group elements together with a translation parameter
$\lambda\in Q$.

Let $\ell\colon W\ra \N$
be the standard length function, and let $\leq$ denote the Bruhat order on $W$.

We will also use the usual alcove model for $W$; see, for example,
\cite[\S 4.3]{Hu-coxeter}. The affine reflection hyperplanes cut
$Q_\R$ into connected components, called alcoves. The fundamental alcove is
the alcove corresponding to the identity element, and $W$ acts simply
transitively on the set of alcoves. We will therefore freely identify an
element $w\in W$ with the alcove obtained from the fundamental alcove by the
action of $w$.

Crossing a wall of an alcove corresponds to multiplying on the right by a
simple reflection. Thus reduced expressions for $w$ correspond to minimal
galleries from the fundamental alcove to the alcove $w$, and $\ell(w)$ is the
length of such a gallery. Later, when considering rays in $Q_\R$, we will
record the sequence of alcoves crossed by the ray in this sense.

\begin{Example}
For visualizations of the alcove picture, see, for example,
\autoref{fig:a2-alcoves}.
\end{Example}

\subsection{The Hecke algebra}\label{secke:Hecke}
Let $W$ be an arbitrary Coxeter group, and $\Hecke_v(W)$ be the Hecke algebra of $W$ with generic parameter $v$, so that $\Hecke_v(W)$ is a free $\Z[v,v^{-1}]$-module.
Since the asymptotic arguments in this paper take place after specializing
$v=1$, we will work, unless explicitly stated otherwise, with the
specialized $\R$-algebra via $\Z[v,v^{-1}]\to\R,v\mapsto 1$
\[
\Hecke:=\R\otimes_{\Z[v,v^{-1}]}\Hecke_v(W)\cong \R [W].
\]
It has standard basis
\[
\{\delta_w\mid w\in W\},
\]
with multiplication
\[
\delta_x\delta_y=\delta_{xy}.
\]

We also use the specialization at $v=1$ of the Kazhdan--Lusztig basis \cite{KaLu-coxeter-hecke},
denoted
\[
\{b_w\mid w\in W\}.
\]
For a simple reflection $s\in S$ one has
\[
b_s=\delta_s+1.
\]
Every KL basis element expands in the standard basis as
\[
b_w=\sum_{x\leq w} a_{x,w}\delta_x,
\quad
a_{x,w}\in\N,
\quad
a_{w,w}=1.
\]
Moreover, we have
\[
b_xb_y=\sum_{w\in W} p_{x,y}^w b_w,\quad p_{x,y}^w\in\N.
\]

\begin{Definition}
Define
\[
\nu\colon \Hecke\ra \R
\]
by
\[
\nu\left(\sum_{w\in W} c_w b_w\right)
:=
\sum_{w\in W} c_w.
\]
We think of $\nu$ as the number of indecomposable summands.
\end{Definition}

Note that this map is not multiplicative: in general,
\[
\nu(b_x)\nu(b_y)=1,
\]
whereas
\[
\nu(b_xb_y)\neq 1,
\]
for almost every pair of $x,y \in W$.

\begin{Definition}
An element of $\Hecke$ is called KL-positive if it is an
$\R_{\geq 0}$-linear combination of KL basis elements. Similarly, it is called
standard-positive if it is an $\R_{\geq 0}$-linear combination of
standard basis elements.
\end{Definition}

\begin{Definition}
Define
\[
\nu^\delta\colon \Hecke\ra \R
\]
by
\[
\nu^\delta\left(\sum_{x\in W} c_x\delta_x\right)
:=
\sum_{x\in W} c_x.
\]
We think of $\nu^\delta$ as the dimension.
\end{Definition}

\begin{Remark}
 Through
categorification, the map $\nu^\delta$ of a KL positive element can be interpreted as the rank of
the corresponding Soergel bimodule. 
\end{Remark}

\begin{Lemma}\label{nu_del ringhom}
The map
\[
\nu^\delta\colon \Hecke\ra \R
\]
is an $\R$-algebra homomorphism.
\end{Lemma}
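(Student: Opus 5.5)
Since the specialization at $v=1$ identifies $\Hecke$ with the group algebra $\R[W]$, with $\delta_x\delta_y=\delta_{xy}$, the plan is to recognize $\nu^\delta$ as the augmentation map of this group algebra, i.e. the linear extension of the trivial representation $W\to\R^\times$, $x\mapsto 1$. The claim then follows from the universal property of group algebras, or equally from a direct computation on the standard basis.

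First, $\nu^\delta$ is $\R$-linear by definition, because it sums coefficients in the standard basis $\{\delta_w\}$. It is also unital: the unit of $\Hecke$ is $\delta_e$, so $\nu^\delta(1)=1$.

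For multiplicativity, take $h=\sum_x c_x\delta_x$ and $h'=\sum_y c'_y\delta_y$, both finite sums. Then
\[
hh'=\sum_{x,y}c_xc'_y\delta_{xy}.
\]
Applying $\nu^\delta$ and regrouping by $w=xy$ shows that the coefficient of $\delta_w$ is $\sum_{xy=w}c_xc'_y$. Summing these over $w$ gives
\[
\nu^\delta(hh')=\sum_{x,y}c_xc'_y=\Bigl(\sum_x c_x\Bigr)\Bigl(\sum_y c'_y\Bigr)=\nu^\delta(h)\,\nu^\delta(h').
\]

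There is no real obstacle here. The only point to be careful about is that the multiplication in $\Hecke$ really is $\delta_x\delta_y=\delta_{xy}$ after specialization. This holds because the quadratic relation $\delta_s^2=(v^{-1}-v)\delta_s+1$ becomes $\delta_s^2=1$ at $v=1$, and the braid relations are unchanged, so $\Hecke\cong\R[W]$ as the paper records.
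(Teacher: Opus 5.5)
Your proposal is correct and matches the paper, whose proof is simply ``Immediate''. Identifying $\nu^\delta$ with the augmentation map of $\R[W]$ and checking multiplicativity directly on the standard basis is exactly the intended argument, and the paper later uses the same augmentation viewpoint in the $\widetilde C_2$ section.
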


\begin{proof}
Immediate.
\end{proof}

We will also use the standard cell structure coming from the KL basis; see
\cite{KaLu-coxeter-hecke}. The left, right, and two-sided KL preorders on $W$
give rise to left, right, and two-sided cells. We will only use this structure
in a coarse way.

For an affine Weyl group, the two-sided cell of central importance below is
the lowest two-sided cell in the convention of \cite{Shi-two-sided-cell-I,Shi-two-sided-cell-II}. We call it the projective cell. Its geometry
admits a concrete alcove description, due to Shi
\cite{Shi-two-sided-cell-I,Shi-two-sided-cell-II}, which will be recalled
when needed in \autoref{sec:poly}. We will also use the standard description
of this cell in terms of finite left and right corrections around spherical
elements, as in
\cite{LuXi-canonical-left-cells,Xi-lowest-based-ring-II}.

\begin{Example}
As above, for visualizations see, for example,
\autoref{fig:a2-alcoves}.
\end{Example}

\subsection{The Satake isomorphism}\label{sec:satake}

We retain the adjoint group $G$ fixed in \autoref{sec:Affineintro}. Let
$\Rep G$ be the symmetric monoidal category of finite dimensional complex
representations of $G$, and let
\[
K_0(\Rep G)
\]
be its Grothendieck ring, with basis given by the isomorphism classes of finite
dimensional simple $G$-modules, written $[L(\lambda)]$.

Let $w_0\in W_f$ be the longest element. Consider the spherical idempotent
\[
e
=
\frac{1}{|W_f|}
\sum_{x\in W_f}\delta_x
=
\frac{b_{w_0}}{|W_f|}.
\]
Then
\[
e^2=e,
\]
and we define the spherical Hecke algebra (specialised at $v=1$) by
\[
\Hecke_{\mathrm{sph}}:=e\Hecke e.
\]

We first record which KL basis elements are compatible with the spherical
idempotent.

\begin{Lemma}\label{lem:longestrepiff}
A KL basis element $b_w$ satisfies
\[
b_we=b_w
\]
if and only if $w$ is the longest representative of its coset in $W/W_f$.
\end{Lemma}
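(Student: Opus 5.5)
We use two standard facts about the Kazhdan--Lusztig basis, specialised at $v=1$. First, for $s\in S$ one has $b_wb_s=2b_w$ whenever $ws<w$. Second, $b_w\in\delta_w+\sum_{x<w}\Z_{\geq 0}\delta_x$, so $b_w$ has leading term $\delta_w$ in the Bruhat order. We also use $e=b_{w_0}/|W_f|$, together with the identity $b_{w_0}=\sum_{x\in W_f}\delta_x$.

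For the direction ($\Leftarrow$), suppose $w$ is the longest element of its coset $wW_f$. Then $ws<w$ for every $s\in\{s_1,\dots,s_r\}$, so $b_wb_s=2b_w$. In the specialisation, $b_s=\delta_s+1$, so this reads $b_w\delta_s=b_w$ for every finite simple reflection $s$. Hence $b_w\delta_x=b_w$ for all $x\in W_f$. Averaging over $W_f$ then gives
\[
b_we=\frac{1}{|W_f|}\sum_{x\in W_f}b_w\delta_x=b_w.
\]
The fact $b_wb_s=2b_w$ for $ws<w$ is the standard Kazhdan--Lusztig multiplication rule $b_wb_s=(v+v^{-1})b_w$, here specialised at $v=1$. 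I would simply cite it, or derive it from the inductive construction of the KL basis.

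For the direction ($\Rightarrow$), suppose $b_we=b_w$. Multiplying on the right by $\delta_s$ for $s\in\{s_1,\dots,s_r\}$ and using $e\delta_s=e$ (immediate from the definition of $e$) gives $b_w\delta_s=b_w$. Now compare coefficients of $\delta_{ws}$. The element $b_w\delta_s$ has coefficient $a_{ws\cdot s,w}=a_{w,w}=1$ at $\delta_{ws}$, while $b_w$ has coefficient $a_{ws,w}$ there. If $ws>w$, then $ws\not\leq w$, so $a_{ws,w}=0$, which contradicts $b_w\delta_s=b_w$. Therefore $ws<w$ for every finite simple reflection $s$. This is exactly the condition that $w$ is the longest element of $wW_f$, by the standard description of minimal and maximal coset representatives for parabolic subgroups.

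The only step needing any care is the KL multiplication rule in ($\Leftarrow$). Everything else is a coefficient comparison using the Bruhat support of $b_w$, so I do not expect a real obstacle.
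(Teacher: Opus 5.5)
Your proof is correct and follows the paper's argument in both directions: the forward direction reduces to $b_wb_s=2b_w$, equivalently $b_w\delta_s=b_w$, and the backward direction averages $b_w\delta_x=b_w$ over $W_f$. The one difference is in how you exclude $ws>w$: you compare the coefficient of $\delta_{ws}$, which only needs that $b_w$ is unitriangular with Bruhat support below $w$. The paper instead uses that $b_{ws}$ occurs in the KL expansion of $b_wb_s$. Your route is slightly more elementary, but the two are equivalent.
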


\begin{proof}
Suppose first that \(b_we=b_w\). For every simple reflection \(s_i\in W_f\),
we have \(eb_{s_i}=2e\), and hence
\[
b_wb_{s_i}=b_web_{s_i}=2b_we=2b_w.
\]
If \(ws_i>w\), then \(b_{ws_i}\) occurs in \(b_wb_{s_i}\), which is impossible.
Thus \(ws_i<w\) for every simple reflection \(s_i\in W_f\). By standard
Coxeter theory, this is equivalent to \(w\) being the longest representative
of its coset in \(W/W_f\).

Conversely, suppose that \(w\) is the longest representative of its coset in
\(W/W_f\). Then by construction $b_w\in \Hecke b_{s_i}$ for each quasi-idempotent $b_{s_i}$, $i>0$, so
\(b_wb_{s_i}=2b_w\), or in other words
\(b_w\delta_{s_i}=b_w\) for every simple reflection \(s_i\in W_f\). Therefore
\(b_w\delta_x=b_w\) for every \(x\in W_f\), and so
\[
b_we
=
\frac{1}{|W_f|}
\sum_{x\in W_f} b_w\delta_x
=
b_w,
\]
as desired.
\end{proof}

\begin{Lemma}\label{L:TwoSided}
A KL basis element $b_w$ satisfies
\[
b_w\in \Hecke_{\mathrm{sph}}
\]
if and only if $w$ is the longest representative of its double coset in
\[
W_f\backslash W/W_f.
\]
\end{Lemma}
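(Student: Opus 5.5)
We reduce to \autoref{lem:longestrepiff} applied on both sides. First, $b_w\in\Hecke_{\mathrm{sph}}=e\Hecke e$ holds if and only if $eb_we=b_w$. Indeed, if $b_w=eXe$ then $eb_we=e^2Xe^2=b_w$ because $e^2=e$; the converse is trivial. Moreover $eb_we=b_w$ is equivalent to the pair of conditions $b_we=b_w$ and $eb_w=b_w$. If $eb_we=b_w$, multiplying on the right by $e$ gives $b_we=eb_we^2=b_w$, and symmetrically on the left. The reverse implication is immediate.

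So the claim splits into a right-handed and a left-handed statement. \autoref{lem:longestrepiff} already says that $b_we=b_w$ holds exactly when $w$ is longest in $wW_f$. For the left-handed version I would use the anti-involution $\iota\colon\Hecke\to\Hecke$ given by $\delta_x\mapsto\delta_{x^{-1}}$. This map is the specialization of the standard anti-involution of $\Hecke_v(W)$, which sends $b_x$ to $b_{x^{-1}}$ because KL polynomials satisfy $h_{y,x}=h_{y^{-1},x^{-1}}$. It also fixes $e$, since $W_f$ is closed under inversion. Applying $\iota$ then shows that $eb_w=b_w$ is equivalent to $b_{w^{-1}}e=b_{w^{-1}}$. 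By \autoref{lem:longestrepiff}, this holds exactly when $w^{-1}$ is longest in $w^{-1}W_f$, that is, when $w$ is longest in $W_fw$. Alternatively, one can simply rerun the proof of \autoref{lem:longestrepiff} with left multiplication by $b_{s_i}$, which is equally routine.

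It remains to check the Coxeter-theoretic fact that $w$ is longest in $W_fwW_f$ if and only if $w$ is longest in both $wW_f$ and $W_fw$, i.e. $s_iw<w$ and $ws_i<w$ for all $1\le i\le r$. The double coset is finite because $W_f$ is finite, so it has a unique longest element. That element is certainly longest in each one-sided coset it contains. For the converse, I would invoke the standard structure of parabolic double cosets (e.g. \cite[\S2.7]{Hu-coxeter} type results, or Kilmoyer's theorem). These say that a double coset $W_IxW_J$ has a unique element of maximal length, and that an element with no left descent-increasing moves in $W_I$ and no right ones in $W_J$ must be this maximal element. This is dual to the statement for minimal elements: $w$ is minimal in $W_IwW_J$ iff it is minimal in $W_Iw$ and in $wW_J$. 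One can derive it by multiplying by the longest elements $w_0$ of the two parabolic subgroups, or directly by citing the minimal version.

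I expect the main obstacle to be this last double coset fact, specifically the direction ``longest in both one-sided cosets implies longest in the double coset''. Everything else is formal. If a clean reference is unavailable, I would prove it via the minimal-element statement. Let $u$ be minimal in $W_fwW_f$, and write $w=xuy$ with $x,y\in W_f$ and length-additive factorizations. Maximality of $w$ on each side then forces $x$ and $y$ to be long enough that $\ell(w)$ attains the maximum possible value $\ell(w_0)+\ell(u)+\ell(w_0)-\ell(\text{stabilizer correction})$.
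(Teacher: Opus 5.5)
Your proposal is correct and takes essentially the paper's route: the paper's proof is the one-liner ``apply \autoref{lem:longestrepiff} on the left and on the right.'' You supply the details it leaves implicit, namely that $b_w\in e\Hecke e$ if and only if $b_we=b_w=eb_w$, the left-handed version via $\delta_x\mapsto\delta_{x^{-1}}$, and the fact that being longest in both one-sided cosets is equivalent to being longest in $W_fwW_f$. For that last fact, cite the standard Kilmoyer--Deodhar description of parabolic double cosets rather than your suggested shortcut of multiplying by $w_0$ on both sides. That shortcut fails because $z\mapsto w_0zw_0$ is not length-reversing on double cosets: for instance, it fixes both $e$ and $w_0$ in $W_f\cdot e\cdot W_f=W_f$.
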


\begin{proof}
Immediate from \autoref{lem:longestrepiff}, applied on the left and on the
right.
\end{proof}

For $\lambda\in Q^+$, let $m_\lambda$
denote the longest representative of the double coset
\[
W_f t_\lambda W_f.
\]
By \autoref{L:TwoSided},
\[
b_{m_\lambda}\in \Hecke_{\mathrm{sph}}.
\]

The following is the form of the Satake isomorphism that we use.

\begin{Lemma}\label{lem:satake}
The combinatorial Satake isomorphism is an $\R$-algebra isomorphism
\[
\Hecke_{\mathrm{sph}}
\ra
\R\otimes_{\Z} K_0(\Rep G),
\]
which sends
\[
b_{m_\lambda}
\longmapsto
|W_f|[L(\lambda)]
=
[L(\lambda)^{\oplus |W_f|}]
\]
for every dominant $\lambda\in Q^+$. Under this isomorphism,
$\nu^\delta$ corresponds to the ordinary dimension map. In particular,
\[
\nu^\delta(b_{m_\lambda})
=
|W_f|\dim L(\lambda).
\]
\end{Lemma}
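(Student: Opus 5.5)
The plan is to deduce this from the classical Satake isomorphism together with Lusztig's $q$-analogue of weight multiplicities, and then specialize at $v=1$.

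\textbf{Step 1: the generic statement.} We work first in the generic spherical Hecke algebra $e\Hecke_v(W)e$. Lusztig's theorem (see also Knop \cite{Kn-spherical-KL}) says that the spherical Kazhdan--Lusztig elements $b_{m_\lambda}$ map to characters. More precisely, under the Satake isomorphism
\[
e\Hecke_v(W)e\cong \Z[v,v^{-1}]\otimes_\Z K_0(\Rep G),
\]
the element $b_{m_\lambda}$ is sent to $\pi(v)\,[L(\lambda)]$. Here $\pi(v)$ is the Poincar\'e polynomial of $W_f$, suitably normalized. The reason is that the spherical KL polynomials $P_{m_\mu,m_\lambda}$ are Lusztig's $q$-analogues of weight multiplicities. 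The Satake map itself is defined via the Bernstein presentation: one identifies $e\Hecke_v e$ with the $W_f$-invariants of the group algebra of the translation lattice, i.e.\ with $\Z[v^{\pm1}][Q]^{W_f}\cong \Z[v^{\pm1}]\otimes K_0(\Rep G)$. This uses $G$ adjoint, so that its weight lattice is $Q$.

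\textbf{Step 2: specialization.} Setting $v=1$ and tensoring with $\R$ preserves the isomorphism, because both sides are free $\Z[v^{\pm1}]$-modules with compatible bases: $\{b_{m_\lambda}\}_{\lambda\in Q^+}$ on one side and $\{[L(\lambda)]\}_{\lambda\in Q^+}$ on the other. Also $\pi(1)=|W_f|$. This gives $b_{m_\lambda}\mapsto |W_f|[L(\lambda)]$.

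There is a normalization subtlety to check. At $v=1$ the Bernstein elements become honest translations $\delta_{t_\mu}$. The map is then $e\,\delta_{t_\mu}\,e\mapsto \frac{1}{|W_f|}\sum_{x\in W_f}e^{x\mu}$, or a scaled version of it. The scaling must be fixed so that the identity $e$ goes to $[L(0)]$. Indeed $b_{m_0}=b_{w_0}=|W_f|e$, which should go to $|W_f|[L(0)]$, consistent with the claimed formula at $\lambda=0$.

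\textbf{Step 3: dimensions.} At $v=1$ we have $\Hecke\cong\R[W]$. On $e\R[W]e$, the map $\nu^\delta$ sends $e\delta_{t_\mu}e$ to $1$. On the other side, $\dim$ sends the orbit average $\frac{1}{|W_f|}\sum_{x\in W_f} e^{x\mu}$ to $1$. Both $\nu^\delta$ and $\dim$ are algebra homomorphisms (\autoref{nu_del ringhom}), so the two maps agree on these spanning elements and hence coincide under the isomorphism. The formula $\nu^\delta(b_{m_\lambda})=|W_f|\dim L(\lambda)$ then follows immediately from Step 2.

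The main obstacle is Step 1: identifying the image of $b_{m_\lambda}$ with the irreducible character. This is the deep input, Lusztig's theorem that spherical KL polynomials are $q$-weight multiplicities. We would cite it rather than reprove it, being careful with the $v\leftrightarrow v^{-1}$ normalization conventions of the KL basis and with the adjoint-group lattice conventions.
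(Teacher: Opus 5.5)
Your proposal is correct and follows the same route as the paper, which simply cites the classical Satake/Lusztig result (details in Knop). Your Steps~2--3 just spell out the $v=1$ normalization via $b_{m_0}=b_{w_0}=|W_f|e$, and the check that $\nu^\delta$ and $\dim$ agree on the spanning elements $e\delta_{t_\mu}e$.

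One small cleanup is needed. The idempotent $e$ is not defined over $\Z[v,v^{-1}]$, and $b_{m_\lambda}\mapsto\pi(v)[L(\lambda)]$ does not send a basis to a basis there, so the ``compatible bases'' argument does not work as stated. Instead, specialize Lusztig's identity $b_{m_\lambda}=\chi_\lambda(\theta)\,b_{w_0}$ at $v=1$, where the Bernstein elements $\theta_\mu$ become $\delta_{t_\mu}$. Then use the elementary isomorphism $\R[Q]^{W_f}\to e\R[W]e$, $z\mapsto ze$.
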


\begin{proof}
A classical result; details can, for example, be found in
\cite{Kn-spherical-KL}.
\end{proof}

In particular,
\[
\nu^\delta(e)=1,
\]
which corresponds under Satake to the fact that $e$ maps to the trivial
representation.

We will also need a one-sided version of the spherical construction.

\begin{Lemma}\label{lem:satake-one-sided}
Assume that
\[
b_we=b_w.
\]
Then
\[
eb_w\in \Hecke_{\mathrm{sph}}.
\]
Let
\[
V_w\in \R\otimes_{\Z}K_0(\Rep G)
\]
denote the image of $eb_w$ under the Satake isomorphism. Then
\[
(eb_w)^n=eb_w^n
\]
for all $n\geq 1$, and
\[
\dim V_w=\nu^\delta(b_w).
\]
\end{Lemma}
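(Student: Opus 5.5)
The statement makes three claims: $eb_w$ is spherical, $(eb_w)^n=eb_w^n$, and $\dim V_w=\nu^\delta(b_w)$. Each follows from the hypothesis $b_we=b_w$, the idempotency $e^2=e$, and \autoref{nu_del ringhom}. The only extra ingredient is how $e$ acts on $b_w$ from the left.

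For sphericity, I will check that $eb_w$ lies in $e\Hecke e$. We have $eb_w=e\,b_w=e\,(b_we)=e\,b_w\,e$, so $eb_w\in e\Hecke e=\Hecke_{\mathrm{sph}}$. Hence $V_w$ is well defined as the image under the Satake isomorphism of \autoref{lem:satake}.

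For the power identity, I will use induction on $n$, with the case $n=1$ trivial. The key observation is $b_we=b_w$ together with $e^2=e$, which give
\[
(eb_w)(eb_w)=e\,b_w\,e\,b_w=e\,(b_we)\,b_w=e\,b_w^2 .
\]
More generally, suppose $(eb_w)^{n}=eb_w^{n}$. Then
\[
(eb_w)^{n+1}=(eb_w^{n})(eb_w)=e\,b_w^{n-1}(b_we)b_w=e\,b_w^{n+1}.
\]
This only needs $b_we=b_w$; no left-invariance of $b_w$ is required.

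For the dimension, \autoref{lem:satake} says $\nu^\delta$ on $\Hecke_{\mathrm{sph}}$ corresponds to the ordinary dimension map, so $\dim V_w=\nu^\delta(eb_w)$. By \autoref{nu_del ringhom}, $\nu^\delta$ is multiplicative, so $\nu^\delta(eb_w)=\nu^\delta(e)\nu^\delta(b_w)$. Finally $\nu^\delta(e)=\frac{1}{|W_f|}\cdot|W_f|=1$, as already noted after \autoref{lem:satake}.

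None of these steps is a genuine obstacle. The one point to handle carefully is the normalization in Satake, since the isomorphism carries a factor $|W_f|$ on $b_{m_\lambda}$. I will therefore take "corresponds to the dimension map" to mean $\dim(\text{image of }h)=\nu^\delta(h)$ for $h\in\Hecke_{\mathrm{sph}}$. This is consistent with $\nu^\delta(b_{m_\lambda})=|W_f|\dim L(\lambda)$ and with $\nu^\delta(e)=1$ matching the trivial representation.
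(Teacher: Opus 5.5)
Your proposal is correct and follows essentially the same route as the paper. Both arguments show $eb_w=eb_we\in e\Hecke e$, obtain $(eb_w)^n=eb_w^n$ by substituting $b_we=b_w$ and inducting, and get $\dim V_w=\nu^\delta(e)\nu^\delta(b_w)=\nu^\delta(b_w)$ from the multiplicativity of $\nu^\delta$ and its match with the dimension map under Satake. Your remark on the $|W_f|$ normalization is a sound consistency check that the paper leaves implicit.
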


\begin{proof}
Since \(b_we=b_w\), we have
\[
eb_w=eb_we\in e\Hecke e=\Hecke_{\mathrm{sph}}.
\]
Moreover,
\[
(eb_w)^2=eb_web_w=e(b_we)b_w=eb_w^2,
\]
and induction gives \((eb_w)^n=eb_w^n\) for all \(n\geq 1\).

Finally, using \autoref{nu_del ringhom},
\[
\nu^\delta(eb_w)
=
\nu^\delta(e)\nu^\delta(b_w)
=
\nu^\delta(b_w).
\]
Since \(\nu^\delta\) corresponds under Satake to the dimension map, we obtain
$\dim V_w
=
\nu^\delta(eb_w)
=
\nu^\delta(b_w)$.
\end{proof}

\begin{Remark}
\autoref{lem:satake-one-sided} applies equally to elements $b_w$ satisfying
$eb_w=b_w$.
These are precisely the elements for which $w$ is the longest representative
of its coset in
$W_f\backslash W$. Throughout, we will either use $eb_w=b_w$ or $b_we=b_w$ to mean one-sided longest elements; by symmetry, our arguments work for both of them, and we always only state one.
\end{Remark}

\begin{Example}
Here and throughout, let us write, $PGLn$ instead of
$PGL_n(\C)$.
 For $\widetilde{A}_1$, the adjoint group on
the Satake side is $PGL2$, and one has
\[
b_{(12)^k1}
\longmapsto
2[L(2k)].
\]
For instance,
\[
b_{121}
\longmapsto
2[L(2)],
\]
where $L(2)$ is the three-dimensional representation of $PGL2$.

For $\widetilde{A}_2$, the adjoint group on the Satake side is $PGL3$, and one has, for
example,
\[
b_{1213121}
\longmapsto
6[L(\alpha_1+\alpha_2)],
\]
where $L(\alpha_1+\alpha_2)$ is the eight-dimensional representation of
$PGL3$.
\end{Example}

\section{General results}\label{sec:general}

In this section, we derive coarse general bounds for both
$\nu^\delta(b_w)$, the number of standard summands in a KL basis element,
and $\nu(b_w^n)$, the number of KL summands in the decomposition of a power
of a KL basis element.

\subsection{Setup}

Throughout this section, we let $(W,S)$ be an irreducible Coxeter
system of affine Weyl type.

\begin{Definition}
Fix $w\in W$. Define
\[
\nu_n(b_w):=\nu(b_w^n)\in\mathbb{Z}_{\ge 1},
\quad n\geq 0.
\]
(This is the number of indecomposable summands in the corresponding power of
the indecomposable Soergel bimodule. Moreover,
$\nu^\delta(b_w^n)$ corresponds to the length of the standard filtration.) Finally, let
\[
\beta=\beta(w):=\nu^\delta(b_w).
\]
Thus $\beta$ is the sum of the standard basis coefficients of $b_w$ at $v=1$.
\end{Definition}

We apply all of these notions to standard-positive or KL-positive elements,
i.e. finite $\R_{\geq 0}$-linear combinations of the respective basis
elements.

To fix conventions, we record the asymptotic notation used below.

\begin{Notation}
Given two (eventually nonzero) real-valued functions $f$, $g$ which are both defined on $\N$ or
some infinite subset, we will write
\begin{gather*}
\begin{aligned}
f\sim g
&\;\Leftrightarrow\;
\forall\varepsilon>0,\,\exists n_{0}
\;\text{ such that }\;
\fbox{$|\tfrac{f(n)}{g(n)}-1|<\varepsilon$},
\;\forall n>n_{0}
,
\\
f\in o(g)
&\;\Leftrightarrow\;
\forall\varepsilon>0,\,\exists n_{0}
\;\text{ such that }\;
\fbox{\(|f(n)|\leq \varepsilon |g(n)|\)},
\;\forall n>n_{0}
,
\\
f\in O(g)
&\;\Leftrightarrow\;
\exists C>0,\,\exists n_{0}
\;\text{ such that }\;
\fbox{$|f(n)|\leq C\cdot |g(n)|$},
\;\forall n>n_{0}
,
\\
f\in\Theta(g)
&\;\Leftrightarrow\;
\exists C_{1},C_{2}>0,\,\exists n_{0}
\;\text{ such that }\;
\fbox{$C_{1}\cdot g(n)\leq f(n)\leq C_{2}\cdot g(n)$},
\;\forall n>n_{0}
.
\\
\end{aligned}
\end{gather*}
We also often regard sequences $\nu_n=(\nu_0,\nu_1,\dots)$ as functions,
using the same notation.
\end{Notation}

\subsection{Growth results for \texorpdfstring{$\nu^\delta$}{nudelta}}\label{sec:poly}

We record a growth statement for KL basis elements along a ray. 

Let \(r(t)=tv + u\) be a ray (a half-line) in $Q_{\R}$ starting at $u$, where \(t\geq 0\) and \(v\not=0\) is a vector in $Q_{\R}$. We call $r$ \textit{affine} if $u \neq 0$.

We call \(r\) regular if 
\[
\langle v,\alpha^\vee\rangle\neq 0
\quad
\text{for all }\alpha\in \Phi^+.
\]

We say $r$ is nonregular if 
\[\langle v,\alpha^\vee\rangle= 0
\quad
\text{for some }\alpha\in \Phi^+.\]

We will deal with regular rays in the following discussion; see
\autoref{lem:singular rays} for the nonregular case.

Suppose $r$ is regular. Then, it will intersect the interiors of infinitely many alcoves in $Q_\R$. Record the words $w \in W$ for which the ray $r(t)$ hits the interior of the alcove corresponding to $w$. Let $l$ be the length of such a $w$, and write $r_l$ for the corresponding alcove. Thus, if the ray intersects the fundamental alcove, \(r_0=id\). The variable \(l\) ranges over the infinite subset of lengths occurring
along the ray. We restrict ourselves to the case in which the sequence $l$ of word lengths along the ray is strictly increasing. Refer to \autoref{rem:wrong-way} for the other case.

\begin{Remark}
Note that a regular ray can skip word lengths. For example, if we let \(1,2,3\) be the
generators of \(\widetilde{A_2}\), and \(\omega_1,\omega_2\) be the fundamental
weights, the ray defined by \(v=\omega_1+\omega_2\) hits alcoves
\[
id \ra 121 \ra 1213 \ra 1213121 \ra 12131213 \ra \dots\,.
\]
For this ray $r(t) = tv$, we therefore have the list of alcoves $r_0, r_3, r_4, r_7, r_8, \dots$.

See \autoref{sec:sl3} for notational details.
\end{Remark}

\begin{Remark}\label{rem:wrong-way}
Depending on $u$ and $v$, the sequence of word lengths $r_l$ may decrease for some finite time (intuitively, if the ray is moving ``towards the origin" from $u$). This means that, for some values of $l$, there are potentially two alcoves $r_l, r'_l$. However, after a finite point, the word lengths are strictly increasing along the ray, so this doesn't cause any complications in the asymptotic behaviour. We simply choose not to include this case for ease of proofs, however note the results in this section are unchanged in this context.
\end{Remark}

We set
\[
p_l=p_l(r):=\nu^\delta(b_{r_l}).
\]

\begin{Theorem}\label{thm:poly-projective-ray}
Let \(r(t)=tv\) be a regular ray starting at the origin. Then
\[
p_l(r)\in \Theta\bigl(l^{|\Phi^+|}\bigr),
\]
as \(l\to\infty\) through the lengths met by the ray.
\end{Theorem}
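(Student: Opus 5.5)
The plan is to compare $b_{r_l}$ with the spherical elements $b_{m_\lambda}$, whose $\nu^\delta$ is known exactly by \autoref{lem:satake}: $\nu^\delta(b_{m_\lambda})=|W_f|\dim L(\lambda)$. We then evaluate $\dim L(\lambda)$ with Weyl's dimension formula. The comparison will use only two facts: positivity, and the fact that multiplying by $b_s$ changes $\nu^\delta$ by a factor of exactly $2$.

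\emph{Step 1 (two monotonicity inequalities).} Let $x\in W$ and $s\in S$ with $xs>x$. The product $b_xb_s$ is KL-positive and contains $b_{xs}$ with coefficient $1$. Since every $b_y$ is standard-positive and $\nu^\delta$ is multiplicative (\autoref{nu_del ringhom}), we get
\[
\nu^\delta(b_{xs})\le \nu^\delta(b_x)\,\nu^\delta(b_s)=2\,\nu^\delta(b_x),
\]
and the same holds on the left. Iterating gives two consequences, which are really the same inequality read in two directions:
\begin{enumerate}
\item If $x'=a\,x\,c$ with $\ell(x')=\ell(a)+\ell(x)+\ell(c)$ and $\ell(a)+\ell(c)\le k$, then $\nu^\delta(b_{x'})\le 2^k\nu^\delta(b_x)$.
\item In the situation of (a), equivalently $\nu^\delta(b_x)\ge 2^{-k}\nu^\delta(b_{x'})$.
\end{enumerate}

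\emph{Step 2 (geometry along the ray).} Since $v$ is regular, for large $t$ the point $tv$ lies deep inside a single Weyl chamber $C=y_0C^+$, at distance $\gg 1$ from every finite reflection hyperplane. We claim there is a constant $k$, depending only on $W$ and the chamber, with the following property. For every alcove $r_l$ with $l$ large there are $\mu,\lambda\in Q^+$ such that:
\begin{itemize}
\item $r_l=a\,m_\mu\,c$ with lengths adding and $\ell(a)+\ell(c)\le k$;
\item $m_\lambda=a'\,r_l\,c'$ with lengths adding and $\ell(a')+\ell(c')\le k$;
\item $\mu$ and $\lambda$ lie within bounded distance of $t\,v^+$, where $v^+$ is the dominant $W_f$-conjugate of $v$.
\end{itemize}
The idea is that far from the finite walls, the double coset $W_ft_\mu W_f$ consists of $|W_f|^2$ alcoves clustered, up to the $W_f$-symmetry, in a bounded region around $\pm$ the $W_f$-orbit of $\mu$. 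Its longest element $m_\mu$ is the extreme alcove of the cluster. Concretely, one writes $r_l=y\,t_{\lambda'}\,z$ with $y,z\in W_f$, and uses that for $\lambda'$ deep in a chamber the length is additive: $\ell(y\,t_{\lambda'}z)=\ell(t_{\lambda'})\pm(\text{bounded})$, by the explicit length formula of Iwahori--Matsumoto. This is also the content of the Lusztig--Xi/Shi description of the lowest cell, recalled in \autoref{sec:poly}, as bounded corrections of spherical elements. Choosing $\mu=\lambda'-(\text{a fixed regular shift})$ and $\lambda=\lambda'+(\text{same shift})$ makes the required reduced factorizations exist with uniformly bounded $a,c,a',c'$.

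I expect Step 2 to be the main obstacle. It needs a clean length-additivity statement for products $y\,t_\lambda\,z$ with $\lambda$ sufficiently regular. I would first prove this directly from the formula $\ell(w t_\lambda)=\sum_{\alpha>0}|\langle\lambda,\alpha^\vee\rangle+\epsilon_\alpha|$ with $\epsilon_\alpha\in\{0,\pm1\}$, which gives additivity as soon as $|\langle\lambda,\alpha^\vee\rangle|>1$ for all $\alpha$.

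\emph{Step 3 (conclusion).} Combining Steps 1 and 2 gives
\[
2^{-k}|W_f|\dim L(\lambda)\le p_l\le 2^{k}|W_f|\dim L(\mu).
\]
By Weyl's formula,
\[
\dim L(\mu)=\prod_{\alpha\in\Phi^+}\frac{\langle \mu+\rho,\alpha^\vee\rangle}{\langle\rho,\alpha^\vee\rangle}.
\]
Each factor equals $t\langle v^+,\alpha^\vee\rangle+O(1)$ with $\langle v^+,\alpha^\vee\rangle>0$ by regularity. Hence both $\dim L(\lambda)$ and $\dim L(\mu)$ lie in $\Theta(t^{|\Phi^+|})$. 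Finally, $l=\ell(r_l)=\ell(t_{\lambda'})+O(1)=\langle\lambda',2\rho^\vee\rangle+O(1)=t\langle v^+,2\rho^\vee\rangle+O(1)$, so $l\in\Theta(t)$, and therefore $p_l\in\Theta(l^{|\Phi^+|})$.
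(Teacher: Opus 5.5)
Your proposal is correct and is essentially the paper's argument. The paper also sandwiches $b_{r_l}$ between two spherical elements using the factor-$2$ rule for $b_s$, and then applies Weyl's dimension formula:
\begin{itemize}
\item the upper comparison is with $b_{m_\mu}$, via the Lusztig--Xi length-additive factorization $r_l=x\,m_\mu\,y$ with $x,y$ in a fixed finite set; this is your first bullet;
\item the lower comparison is with $b_{m_\lambda}$, where $m_\lambda$ is the longest element of $W_f r_l W_f$; this is your second bullet, and no shift is needed there, since lengths automatically add with $\ell(a'),\ell(c')\le\ell(w_0)$.
\end{itemize}
The only real difference is that you would derive the factorization from the Iwahori--Matsumoto formula rather than cite Lusztig--Xi/Shi. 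If you do, the correct bounded factors depend on the chamber containing $v$; the naive choice is not length-additive in, for example, the antidominant chamber. Also, $\ell(t_{\lambda'})$ should read $\langle\lambda'^{+},2\rho^\vee\rangle$. With the citation as fallback, the argument is complete.
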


\begin{proof}
This will be proved in the next few lemmas. We use Shi's alcove description
\cite{Shi-two-sided-cell-I,Shi-two-sided-cell-II} of the projective two-sided
cell, which is the lowest cell in Shi's terminology. This description of the projective two-sided cell says that the alcoves indexed by its elements are precisely those lying outside all Shi strips, which, in formulas, is
\[
\left\{
x\in Q_{\R}\ \middle|\ 
\text{for every }\alpha\in \Phi^+,\ 
\langle x,\alpha^\vee\rangle\geq 0
\text{ or }
\langle x,\alpha^\vee\rangle\leq -1
\right\}.
\]
Thus the projective cell is approximately a union of chambers for the finite Weyl group.

\begin{Lemma}\label{lem:regular-rays-projective}
Let \(r(t)=tv\) be a regular ray. Then \(r\) is eventually contained in the
projective cell. Moreover, once \(r\) is in the projective
cell, it never leaves it.
\end{Lemma}

\begin{proof}
Fix \(\alpha\in \Phi^+\). Since \(r\) is regular,
\[
\langle v,\alpha^\vee\rangle\neq 0.
\]
Thus \(\langle tv,\alpha^\vee\rangle\) tends either to \(+\infty\) or to
\(-\infty\). Hence, for all sufficiently large \(t\), we have either
\[
\langle tv,\alpha^\vee\rangle\geq 0
\quad
\text{or}
\quad
\langle tv,\alpha^\vee\rangle\leq -1.
\]
Since \(\Phi^+\) is finite, this holds simultaneously for all
\(\alpha\in\Phi^+\) after increasing \(t\). By the description recalled above, these inequalities place the ray in the projective cell. The same
inequalities also show that the ray cannot re-enter any
forbidden strip
\[
\{x\mid -1<\langle x,\alpha^\vee\rangle<0\}.
\]
Thus it remains in the projective cell.
\end{proof}

By \autoref{lem:regular-rays-projective}, the ray is eventually contained in
the projective cell. In the projective cell, every element is at uniformly
bounded Coxeter distance from a Satake element \(m_\mu\in W\), with
\(b_{m_\mu}\in\Hecke_{\mathrm{sph}}\), and with constants depending only on
the affine type. This follows from the following well-known lemma.

\begin{Lemma}\label{lem:projective-close-to-satake}
There is a finite set \(F\subset W\), depending only on the affine type, with
the following property. If \(w \in W\) lies in the projective cell, then there
are \(x,y\in F\) and a dominant weight \(\mu\) such that
\begin{align*}
w=x\,m_\mu y,
\quad \text{and} \quad
\ell(w)=\ell(x)+\ell(m_\mu)+\ell(y).
\end{align*}
In particular,
\[
\ell(w)-2C\leq \ell(m_\mu)\leq \ell(w),
\]
where \(C\) is the length of a longest word in \(F\).
\end{Lemma}

\begin{proof}
We use the (complementary) description of the projective cell in terms of finite left and
right corrections of spherical elements, due to Lusztig--Xi and Xi
\cite{LuXi-canonical-left-cells,Xi-lowest-based-ring-II}.
In the form needed here, this says that there exist finite sets
\(F_{\mathrm L},F_{\mathrm R}\subset W\), depending only on the affine type,
such that every element \(w\) in the projective cell admits a factorization
\[
w=xm_\mu y
\]
for some \(x\in F_{\mathrm L}\), \(y\in F_{\mathrm R}\), and some dominant
weight \(\mu\), with
\[
\ell(w)=\ell(x)+\ell(m_\mu)+\ell(y).
\]
Thus, after setting
$F:=F_{\mathrm L}\cup F_{\mathrm R}$,
we obtain the claimed form \(w=xm_\mu y\) with \(x,y\in F\).
Finally, if
$C:=\max\{\ell(z)\mid z\in F\}$,
then the length additivity gives
\[
\ell(m_\mu)\leq \ell(w)
\text{ and }
\ell(m_\mu)
=
\ell(w)-\ell(x)-\ell(y)
\geq \ell(w)-2C.
\]
This proves the claim.
\end{proof}

Applying \autoref{lem:projective-close-to-satake} to the alcoves \(r_l\) met
by the ray, we obtain dominant weights \(\mu_l\) such that
\[
\ell(m_{\mu_l})\in \Theta(l).
\]

Now, given $w\in W$ we consider \(m_\lambda \in W_f w W_f\), the longest representative of
this double coset. This element is two-sided longest, and so
\(b_{m_\lambda}\in \Hecke_{\mathrm{sph}}\) by 
\autoref{L:TwoSided}. This is associated to \(w\) in a canonical way,
and thus we aim to relate \(\ell(w)\) to \(\ell(m_\lambda)\), and
\(\nu^\delta(b_w)\) to \(\nu^\delta(b_{m_\lambda})\). We do so in the
following lemma.

\begin{Lemma}\label{lem:finite factor bound}
Let \(F\) and \(C\) be as above. For every \(w\) in the projective cell, there
is a dominant weight \(\lambda\) such that
\[
\ell(w)\leq \ell(m_\lambda)\leq \ell(w)+2\ell(w_0),
\]
and
\[
2^{-2\ell(w_0)}\nu^\delta(b_{m_\lambda})
\leq
\nu^\delta(b_w)
\leq
K\,\nu^\delta(b_{m_\lambda}),
\]
for some constant \(K>0\), depending only on the affine type.
\end{Lemma}

\begin{proof}
Let \(m_\lambda\) from \(w\) be the longest
element in the double coset \(W_f w W_f\). If we write
\[
m_\lambda=gwh,
\]
where
\[
\ell(m_\lambda)=\ell(g)+\ell(w)+\ell(h),
\]
then clearly
\[
\ell(g),\ell(h)\leq \ell(w_0),
\]
and the length inequality follows.

Now, we recall that
\[
\nu^\delta(b_sb_x)=2\nu^\delta(b_x)
\]
for any simple reflection \(s\in W\) and any \(x\in W\). Furthermore,
\(b_{m_\lambda}\) is a KL summand of \(b_gb_wb_h\). We also observe that
\(b_g\) is a KL summand of
\(b_{s_{i_1}}\dots b_{s_{i_k}}\), where
\(s_{i_1}\dots s_{i_k}\) is a reduced expression for \(g\), and similarly
for \(b_h\). Hence
\[
\nu^\delta(b_g)\leq 2^{\ell(g)}\leq 2^{\ell(w_0)},
\]
and similarly for \(h\). Therefore
\[
\nu^\delta(b_{m_\lambda})
\leq
2^{2\ell(w_0)}\nu^\delta(b_w).
\]
(In fact, this construction makes sense for every \(w\in W\), and the
lower comparison does not use the assumption that \(w\) lies in the
projective cell.)

It remains to prove the upper bound. By
\autoref{lem:projective-close-to-satake}, there is some dominant weight
\(\mu\) such that
\[
w=xm_\mu y,
\]
for \(x,y\in F\), with \(\ell(x),\ell(y)\leq C\). Then \(b_w\) is a KL summand
of \(b_xb_{m_\mu}b_y\), and hence
\[
\nu^\delta(b_w)
\leq
2^{2C}\nu^\delta(b_{m_\mu}).
\]

The Satake elements \(m_\mu\) and \(m_\lambda\) differ by multiplication on the
left and on the right by elements from a fixed finite set. Hence the dominant
weights \(\mu\) and \(\lambda\) differ by a uniformly bounded amount, depending
only on the affine type.
Namely, since \(m_\mu\) and \(m_\lambda\) differ by left and right multiplication by
elements from a fixed finite set, there is a constant \(M>0\), depending only
on the affine type, such that
\[
\|\lambda-\mu\|\leq M.
\]
By \autoref{lem:satake},
\[
\nu^\delta(b_{m_\mu})
=
|W_f|\dim L(\mu),
\quad
\nu^\delta(b_{m_\lambda})
=
|W_f|\dim L(\lambda).
\]
Weyl's dimension formula therefore gives a constant \(D>0\), depending only on
the affine type, such that
\[
\nu^\delta(b_{m_\mu})
\leq
D\,\nu^\delta(b_{m_\lambda}).
\]
Combining the last two estimates gives the desired upper bound after setting
\(K=2^{2C}D\).
\end{proof}

From the length bound, we have that
\[
\ell(m_\lambda)\leq \ell(w)+O(1).
\]
Hence, much like with the family of \(m_{\mu_l}\), we also have a Satake element
\(m_{\lambda_l}\) for each alcove \(w_l\) met by \(r\), such that
\[
\ell(m_{\lambda_l})\in \Theta(l).
\]
Now, we can show \autoref{thm:poly-projective-ray}.
By \autoref{lem:satake},
\[
\nu^\delta(b_{m_{\lambda_l}})
=
|W_f|\dim L(\lambda_l).
\]
Weyl's dimension formula gives
\[
\dim L(\lambda_l)
=
\prod_{\alpha\in \Phi^+}
\frac{\langle \lambda_l+\rho,\alpha^\vee\rangle}
{\langle \rho,\alpha^\vee\rangle}.
\]
Since \(r\) is regular, after applying an element of the finite Weyl group its
direction lies in the interior of the dominant chamber. The dominant weights
\(\lambda_l\) stay within bounded distance of this dominant ray. Therefore
\[
\langle \lambda_l,\alpha^\vee\rangle\in \Theta(l)
\quad
\text{for all }\alpha\in \Phi^+.
\]
Every factor in Weyl's dimension formula grows linearly in \(l\), and so
\[
\dim L(\lambda_l)\in \Theta\bigl(l^{|\Phi^+|}\bigr).
\]

Combining the estimates gives the desired
\[
p_l(r)=\nu^\delta(b_{w_l})
\in \Theta\bigl(l^{|\Phi^+|}\bigr),
\]
and we are done.
\end{proof}

\begin{Remark}
Unfortunately, the constants must depend on such a ray. There is no uniform constant $A$ in general such that $A \cdot \ell(w)^{|\Phi^+|} \leq \nu^\delta(b_w)$. 
\end{Remark}

We define a \textit{singular affine ray} $r(t) = tv + u$ to be a ray such that $v = \pm\omega_i$ for some $i$, and $u$ is a translation vector.

\begin{Remark}
For example, if $W = \widetilde{A_2}$, $r(t) = t\omega_1$ doesn't hit the interior of any alcoves, so $r_l$ as defined for regular rays makes no sense here. However, translating by (an appropriate multiple of) $\omega_1+\omega_2$, referring to \autoref{fig:a2-alcoves}, we have a ray along the cell with the words 
\[id \ra 3 \ra 32 \ra 321 \ra 3213 \ra 32132 \ra 321321 \ra \dots,\]
for which we may again form a sequence of values $\nu^\delta$.
\end{Remark}

We have the following bound, with uniform constants.

\begin{Lemma}\label{lem:singular rays}
There exist constants \(A,B>0\), depending only on the affine type, such that
for every \(w\) in the projective cell,
\[
A\cdot \ell(w)^d
\leq
\nu^\delta(b_w)
\leq
B\cdot \ell(w)^{|\Phi^+|},
\]
where \(d=d(\Phi)\) is at least the rank of $\Phi$, and explicitly given by
\[
\begin{array}{c|ccccccccc}
\Phi & A_n & B_n & C_n & D_n & E_6 & E_7 & E_8 & F_4 & G_2 \\ \hline
d & n & 2n-1 & 2n-1 & 2n-2 & 16 & 27 & 57 & 15 & 5 \\ \hline
|\Phi^+| & \frac{n(n+1)}{2} & n^2 & n^2 & n(n-1) & 36 & 63 & 120 & 24 & 6
\end{array}
.
\]
(Here $n\geq 2$ or $n\geq 4$ for types $B/C$ and $D$.)
\end{Lemma}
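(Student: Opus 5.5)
The plan is to reduce everything to Weyl's dimension formula via \autoref{lem:finite factor bound}, which already has uniform constants. For $w$ in the projective cell, that lemma gives a dominant $\lambda\in Q^+$, namely the one with $m_\lambda$ the longest element of $W_fwW_f$, such that
\[
\ell(w)\leq\ell(m_\lambda)\leq\ell(w)+2\ell(w_0)
\quad\text{and}\quad
2^{-2\ell(w_0)}\nu^\delta(b_{m_\lambda})\leq\nu^\delta(b_w)\leq K\,\nu^\delta(b_{m_\lambda}),
\]
with constants depending only on the affine type. By \autoref{lem:satake}, $\nu^\delta(b_{m_\lambda})=|W_f|\dim L(\lambda)$. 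So it suffices to prove
\[
A'\,\ell(m_\lambda)^d\leq\dim L(\lambda)\leq B'\,\ell(m_\lambda)^{|\Phi^+|}
\]
uniformly in $\lambda\in Q^+$. Since the projective cell does not contain the identity, we have $\ell(w)\geq 1$, and the additive shift by $2\ell(w_0)$ is absorbed into the constants.

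The next step is to express $\ell(m_\lambda)$ in terms of $\lambda$. For dominant $\lambda$, the standard formula $\ell(t_\lambda)=\sum_{\alpha\in\Phi^+}\langle\lambda,\alpha^\vee\rangle=\langle\lambda,2\rho^\vee\rangle$ holds, and $m_\lambda$ differs from $t_\lambda$ by elements of $W_f$, so
\[
h(\lambda):=\langle\lambda,2\rho^\vee\rangle=\ell(m_\lambda)+O(1).
\]
Write $\lambda=\sum_i c_i\omega_i$ with $c_i\geq 0$. Then $h(\lambda)=\sum_i c_i\langle\omega_i,2\rho^\vee\rangle$ is comparable to $\sum_i c_i$.

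For the upper bound, each factor in Weyl's dimension formula satisfies
\[
\frac{\langle\lambda+\rho,\alpha^\vee\rangle}{\langle\rho,\alpha^\vee\rangle}\leq 1+\langle\lambda,\alpha^\vee\rangle\leq 1+h(\lambda).
\]
Taking the product over the $|\Phi^+|$ factors gives $\dim L(\lambda)\leq(1+h(\lambda))^{|\Phi^+|}$.

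For the lower bound, all factors are at least $1$. Pigeonhole gives an index $j$ with $c_j\geq h(\lambda)/(rM)$, where $M=\max_i\langle\omega_i,2\rho^\vee\rangle$. For every $\alpha\in\Phi^+$ whose coroot $\alpha^\vee$ has nonzero $\alpha_j^\vee$-coefficient, we get $\langle\lambda,\alpha^\vee\rangle\geq c_j$. Each such factor is therefore at least $c_j/\langle\rho,\alpha^\vee\rangle\gtrsim h(\lambda)$. Let $d_j$ be the number of such roots; this equals the number of positive roots of $\Phi$ not lying in the Levi subsystem obtained by deleting node $j$. Then $\dim L(\lambda)\gtrsim h(\lambda)^{d_j}\geq h(\lambda)^{d}$ with $d:=\min_j d_j$.

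It remains to identify $d$ with the table. We have $d=|\Phi^+|-\max_j|\Phi^+_{\hat j}|$, where $\Phi_{\hat j}$ is the subsystem with node $j$ removed; since the count of roots involving $\alpha_j^\vee$ in the coroot system equals that for $\alpha_j$ in $\Phi$, the dual system is irrelevant. Checking case by case, using the maximal Levi:
\begin{itemize}
\item $A_n$: $\binom{n+1}{2}-\binom n2=n$.
\item $B_n$ and $C_n$: $n^2-(n-1)^2=2n-1$.
\item $D_n$: $n(n-1)-(n-1)(n-2)=2n-2$.
\item $E_6,E_7,E_8$: $36-20=16$ (Levi $D_5$), $63-36=27$ (Levi $E_6$), $120-63=57$ (Levi $E_7$).
\item $F_4$: $24-9=15$ (Levi $B_3$ or $C_3$).
\item $G_2$: $6-1=5$.
\end{itemize}
The inequality $d\geq\operatorname{rank}\Phi$ is visible from the table.

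The main obstacle is the uniformity bookkeeping: ensuring that the $O(1)$ discrepancy between $\ell(w)$, $\ell(m_\lambda)$ and $h(\lambda)$ does not spoil the constants when $\ell(w)$ is small. I would handle this by bounding everything in terms of $1+\ell(w)$, which is comparable to $\ell(w)$ since $\ell(w)\geq 1$. The pigeonhole lower bound, including that $d$ is the correct minimal exponent (attained along $\lambda\in\mathbb Z_{\geq 0}\,\omega_j$ for a $j$ realizing the minimum), is routine.
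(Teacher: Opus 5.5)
Your proposal is correct and follows essentially the same route as the paper: reduce via the finite-factor comparison lemma and Satake to Weyl's dimension formula, use the length formula for dominant translations, pick the largest fundamental-weight coordinate, and bound from below the factors for positive roots whose support contains that node, which gives $d=\min_k\bigl(|\Phi^+|-|\Phi^+_{I\setminus\{k\}}|\bigr)$. If anything, you are more explicit than the paper in two places: the upper bound, where you show each Weyl factor is at most $1+h(\lambda)$, and the exponent, where you carry $d_k\geq d$ rather than just the rank through to the final inequality.
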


\begin{proof}
The upper bound already appears in the proof of
\autoref{lem:finite factor bound}.

For the lower bound, let \(m_\lambda\) be the longest representative of the
double coset \(W_f w W_f\), with \(\lambda\in Q^+\). By
\autoref{lem:finite factor bound}, after changing constants depending only on
the affine type, it is enough to prove
$\dim L(\lambda)\geq C\cdot \ell(m_\lambda)^r$
for some \(C>0\).

Write
$\lambda=\sum_{i=1}^r a_i\omega_i$
with \(a_i\geq 0\), and put \(S=\sum_i a_i\). The usual length formula for
dominant translations gives
\[
\ell(m_\lambda)
=
\ell(w_0)+\sum_{\alpha\in\Phi^+}\langle \lambda,\alpha^\vee\rangle
=
\ell(w_0)+\sum_i a_i
\sum_{\alpha\in\Phi^+}\langle \omega_i,\alpha^\vee\rangle,
\]
and we are left with a purely representation-theoretical problem: namely, by Weyl's
dimension formula, it suffices to prove
\[
\dim L(\lambda)=
\prod_{\alpha\in\Phi^+}
\frac{\langle {\textstyle\sum_{i=1}^r} a_i\omega_i+\rho,\alpha^\vee\rangle}
{\langle \rho,\alpha^\vee\rangle}\geq C'\cdot (1+S)^r\text{ for some }C'>0.
\]
To this end, choose \(k\) such that \(a_k=\max_j a_j\).
For every positive root \(\alpha\) with
\(\langle \omega_k,\alpha^\vee\rangle>0\), the corresponding factor is bounded
below by a positive constant times \(1+a_k\) since
$\langle {\textstyle\sum_{i=1}^r} a_i\omega_i+\rho,\alpha^\vee\rangle
\geq
a_k\langle \omega_k,\alpha^\vee\rangle+\langle \rho,\alpha^\vee\rangle$.
Hence
\[
\dim L(\lambda)
\geq
C_k(1+a_k)^{d_k},
\text{ where }
d_k=\#\{\alpha\in\Phi^+\mid \langle \omega_k,\alpha^\vee\rangle>0\}.
\]
It remains only to note that \(d_k\geq r\). Indeed, for every simple root
\(\alpha_j\), the connected subdiagram spanned by the path from \(k\) to \(j\)
has a highest root whose support contains \(\alpha_k\). These \(r\) positive
roots are distinct and are all counted by \(d_k\).
Therefore, since \(a_k\geq S/r\),
\[
\dim L(\lambda)
\geq
C_k\cdot (1+a_k)^r
\geq
C'\cdot (1+S)^r.
\]
Finally, the numbers for $|\Phi^+|$ are well-known, and if $I$ denotes the vertex set of the finite Dynkin diagram of the respected type, the displayed values of \(d\) are obtained from
\[
d=\min_{k\in I}d_k=\min_{k\in I}\big(|\Phi^+|-|\Phi^+_{I\setminus\{k\}}|\big),
\]
by deleting one node at a time from the Dynkin diagram and taking the minimum.
\end{proof}

We will see explicit examples, with stronger results in specific cases, in
\autoref{sec:polaa} and \autoref{sec:pola2} below. Outside of the projective cell
things are much more delicate, see \autoref{S:C2} for a short discussion.

\subsection{General bounds for \texorpdfstring{$\nu_{n}$}{nun}}\label{sec:generalasymptotics}

This section will deal with coarse asymptotic bounds for
$\nu_n(b_w)=\nu(b_w^n)$, as $n$ increases. The following is the most general
result currently available, and is fully in the projective cell.

\begin{Theorem}\label{thm:general-lower-upper}
Let $W$ be an affine Weyl group. Assume that $w \in W$ lies in the projective cell. Then there exists $C=C_w>0$
such that for all $n\geq 1$,
\[
C\cdot n^{-|\Phi^+|}\cdot \beta^n\leq \nu_n\leq\beta^n.
\]
\end{Theorem}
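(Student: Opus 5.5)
The upper bound is formal, and I would do it first. Write $b_w^n=\sum_x a_x(n)b_x$ with $a_x(n)\in\N$. Every $b_x$ has $\nu^\delta(b_x)\geq a_{x,x}=1$, because its standard coefficients are nonnegative integers and the coefficient of $\delta_x$ is $1$. Hence
\[
\nu_n=\sum_x a_x(n)\leq \sum_x a_x(n)\nu^\delta(b_x)=\nu^\delta(b_w^n)=\beta^n,
\]
where the last equality is \autoref{nu_del ringhom}.

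For the lower bound I would reverse this inequality using the polynomial control of $\nu^\delta$ on the projective cell. Since $w$ lies in the lowest two-sided cell and this cell is an ideal for the KL preorder, every $b_x$ occurring in $b_w^n$ also lies in the projective cell. Each such $x$ satisfies $x\leq$ some element of Bruhat length at most $n\ell(w)$, because all summands of $b_w^n$ occur in a product of $n\ell(w)$ generators $b_s$. So $\ell(x)\leq n\ell(w)$. The upper bound of \autoref{lem:singular rays} then gives
\[
\nu^\delta(b_x)\leq B\,\ell(x)^{|\Phi^+|}\leq B\,\ell(w)^{|\Phi^+|}n^{|\Phi^+|}
\]
uniformly. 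Consequently
\[
\beta^n=\nu^\delta(b_w^n)=\sum_x a_x(n)\nu^\delta(b_x)\leq B\ell(w)^{|\Phi^+|}n^{|\Phi^+|}\,\nu_n,
\]
which rearranges to $\nu_n\geq C n^{-|\Phi^+|}\beta^n$ with $C=B^{-1}\ell(w)^{-|\Phi^+|}$ for all $n\geq 1$.

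The step I expect to need the most care is the use of \autoref{lem:singular rays}, whose upper bound is proved only for elements of the projective cell. I therefore need that all summands of $b_w^n$ stay in that cell. This follows from the cell being the lowest two-sided cell, so that products involving $b_w$ only produce KL basis elements $\leq_{LR}$ $w$, hence inside the same cell. If one prefers to avoid this, it would suffice to have a uniform bound $\nu^\delta(b_x)\leq B'(1+\ell(x))^{|\Phi^+|}$ for all $x\in W$. I would try to get that from \autoref{lem:finite factor bound}'s lower comparison, $\nu^\delta(b_x)\leq 2^{2\ell(w_0)}\ldots$, read in the direction of embedding $b_x$ into $b_gb_{m_\lambda}$-type products. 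The cell argument is cleaner, though, and I would use it.
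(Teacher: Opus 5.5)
Your proof is correct and is essentially the paper's own argument: the upper bound is \autoref{lem:upper-bound}, and the lower bound combines the ideal property of the projective cell, the support bound $\ell(x)\leq n\ell(w)$ (\autoref{lem:length-support}, which you justify via Bott--Samelson products rather than standard terms), and the upper bound of \autoref{lem:singular rays}, packaged as in \autoref{lem:lower-by-standard-sum}. The fallback in your last paragraph would not work as sketched, since the comparison in \autoref{lem:finite factor bound} that you cite bounds $\nu^\delta(b_x)$ from below rather than above, but your main argument does not depend on it.
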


The proof of \autoref{thm:general-lower-upper} splits into a few lemmas. We note that \autoref{lem:upper-bound} - \autoref{lem:length-support} apply to any arbitrary Coxeter group, however the above theorem is only for affine Weyl groups, where we may take advantage of associated root systems.

First, the upper bound.

\begin{Lemma}\label{lem:upper-bound}
Let $U$ be an arbitrary Coxeter group. For any KL-positive element $X=\sum_w c_wb_w \in \Hecke(U)$, one has
\[
\nu(X)\leq \nu^\delta(X).
\]
Furthermore, 
\[
\nu_n\leq\beta^n.
\]
In fact, as a consequence of Fekete's lemma, we have 
\[
\limsup_{n\to\infty}\sqrt[n]{\nu_n} = \lim_{n\to\infty}\sqrt[n]{\nu_n}\leq\beta.
\]
\end{Lemma}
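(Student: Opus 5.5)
The plan has three steps: compare $\nu$ with $\nu^\delta$ basis element by basis element, use multiplicativity of $\nu^\delta$ to get $\nu_n\le\beta^n$, and then obtain the limit from a supermultiplicativity argument via Fekete's lemma.

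\textbf{Step 1: $\nu(X)\le\nu^\delta(X)$.} Both $\nu$ and $\nu^\delta$ are $\R$-linear, and $X=\sum_w c_wb_w$ has $c_w\ge 0$. So it suffices to show $\nu(b_w)=1\le\nu^\delta(b_w)$ for every $w$. This holds because $b_w=\sum_{x\le w}a_{x,w}\delta_x$ with all $a_{x,w}\in\N$ and $a_{w,w}=1$. Summing over $w$ with the nonnegative weights $c_w$ then gives $\nu(X)\le\nu^\delta(X)$.

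\textbf{Step 2: $\nu_n\le\beta^n$.} The element $b_w^n$ is KL-positive, since the structure constants $p_{x,y}^w$ lie in $\N$ and positivity is preserved under products. Step 1 therefore applies to $X=b_w^n$. By \autoref{nu_del ringhom}, $\nu^\delta$ is multiplicative, so
\[
\nu_n=\nu(b_w^n)\le\nu^\delta(b_w^n)=\nu^\delta(b_w)^n=\beta^n .
\]
For $n=0$ both sides equal $1$.

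\textbf{Step 3: the limit exists.} I will show that $(\nu_n)$ is supermultiplicative, $\nu_{m+n}\ge\nu_m\nu_n$. Write $b_w^m=\sum_x a_x(m)b_x$ and $b_w^n=\sum_y a_y(n)b_y$, so that
\[
b_w^{m+n}=\sum_{x,y}a_x(m)a_y(n)\,b_xb_y .
\]
Each product $b_xb_y$ is a nonzero KL-positive element with $\N$ coefficients. Hence $\nu(b_xb_y)\ge 1$; a nonzero element is needed because we have just used that $\nu$ is positive on nonzero KL-positive combinations with integer coefficients. Nonzeroness holds since $\nu^\delta(b_xb_y)=\nu^\delta(b_x)\nu^\delta(b_y)>0$. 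Summing gives $\nu_{m+n}\ge\sum_{x,y}a_x(m)a_y(n)=\nu_m\nu_n$.

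Also $\nu_n\ge 1$ for all $n$. Indeed $b_w^n\neq 0$, because $\nu^\delta(b_w^n)=\beta^n>0$, so $b_w^n$ is a nonzero $\N$-combination of KL basis elements. Hence $\log\nu_n$ is a well-defined superadditive sequence. Fekete's lemma then gives
\[
\lim_n\tfrac1n\log\nu_n=\sup_n\tfrac1n\log\nu_n ,
\]
and this value is at most $\log\beta$ by Step 2. Therefore $\lim\sqrt[n]{\nu_n}$ exists, equals the $\limsup$, and is at most $\beta$.

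The only step needing care is the supermultiplicativity in Step 3, specifically the fact that a product of two KL basis elements is nonzero and has at least one KL summand. This is handled above by applying the ring homomorphism $\nu^\delta$; otherwise everything is routine.
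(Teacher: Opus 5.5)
Your proposal is correct, and Steps 1 and 2 are exactly the paper's argument. Nonnegativity of the standard expansion with $a_{w,w}=1$ gives $\nu^\delta(b_w)\ge 1$, and multiplicativity of $\nu^\delta$ then gives $\nu_n\le\beta^n$.

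The paper's proof stops there and never writes out the Fekete part. Your Step 3 is a correct filling-in of that omitted step: you prove supermultiplicativity $\nu_{m+n}\ge\nu_m\nu_n$ from the nonnegative integer structure constants, together with $\nu^\delta(b_xb_y)>0$, which shows that each product $b_xb_y$ has at least one KL summand.
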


\begin{proof}
At $v=1$, every KL basis element expands in the standard basis with
nonnegative coefficients. In particular $\nu^\delta(b_w)\ge 1$, so
\[
\nu^\delta(X)
=
\sum_w c_w\nu^\delta(b_w)
\geq
\sum_w c_w
=
\nu(X).
\]

This proves the first statement. 

Now, by \autoref{nu_del ringhom},
\[
\nu^\delta(b_w^n)
=
\bigl(\nu^\delta(b_w)\bigr)^n
=
\beta^n.
\]

Hence, \[\nu_n = \nu(b_w^n) \leq \nu^\delta(b_w^n) = \beta^n.\]
\end{proof}

Next, the lower bound.

\begin{Lemma}\label{lem:lower-by-standard-sum}
Let $U$ be an arbitrary Coxeter group, and $0\neq X=\sum_{z\in W}a_zb_z \in \Hecke(U)$ be KL-positive. Then,
\[
\nu(X)\geq
\frac{\nu^\delta(X)}
{\max_{w:a_w\neq 0}\nu^\delta(b_w)}.
\]
\end{Lemma}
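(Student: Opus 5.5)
The plan is to compare the two linear functionals $\nu$ and $\nu^\delta$ term by term on the KL expansion of $X$, using only KL-positivity and the fact that each $\nu^\delta(b_z)$ is a positive integer. Set
\[
M:=\max_{w:a_w\neq 0}\nu^\delta(b_w).
\]
This maximum is over a finite, nonempty set, since $X\neq 0$ is a finite combination. Moreover $M\geq 1$, because $a_{z,z}=1$ and all standard coefficients $a_{x,z}\in\N$, so $\nu^\delta(b_z)\geq 1$ for every $z$. In particular the right-hand side of the claimed inequality is well defined.

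Next I will expand $\nu^\delta$ by linearity:
\[
\nu^\delta(X)=\sum_{z}a_z\,\nu^\delta(b_z).
\]
For every $z$ with $a_z\neq 0$, KL-positivity gives $a_z>0$. By definition of $M$ we also have $\nu^\delta(b_z)\leq M$. Hence each summand satisfies $a_z\nu^\delta(b_z)\leq a_z M$, and summing gives
\[
\nu^\delta(X)\leq M\sum_z a_z=M\,\nu(X).
\]
Dividing by $M>0$ yields the statement.

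There is no real obstacle here; this is the mirror image of \autoref{lem:upper-bound}. The only points requiring care are the following:
\begin{enumerate}
\item Nonnegativity of the $a_z$ is essential, since otherwise the termwise bound fails.
\item The maximum must be taken only over the support of $X$, which is what makes $M$ finite even though $W$ is infinite.
\end{enumerate}
I expect this lemma to be applied later with $X=b_w^n$. There $\nu^\delta(X)=\beta^n$, so the remaining work in \autoref{thm:general-lower-upper} is bounding $\max\nu^\delta(b_z)$ over the support of $b_w^n$ polynomially in $n$. That bound presumably comes from a length-of-support estimate together with \autoref{lem:singular rays}, but it is not needed for the present lemma.
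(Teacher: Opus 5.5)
Your proposal is correct and follows essentially the same approach as the paper: expand $\nu^\delta(X)=\sum_z a_z\nu^\delta(b_z)$, bound each $\nu^\delta(b_z)$ by the maximum over the support, and use nonnegativity of the $a_z$. Your extra checks, that $M$ is a finite maximum with $M\geq 1$ so the division is legitimate, are details the paper leaves implicit.
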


\begin{proof}
Expand $X$ in the standard basis and sum coefficients:
\[
\nu^\delta(X)
=
\sum_w a_w\nu^\delta(b_w)
\leq
\left(\max_{w:a_w\neq 0}\nu^\delta(b_w)\right)\sum_w a_w.
\]
We are done.
\end{proof}

\begin{Lemma}\label{lem:length-support}
Let $U$ be an arbitrary Coxeter group, $b_w \in \Hecke(U)$, and
\[
b_w^n=\sum_{x\in W}a_x^{(n)}b_x,
\quad
a_x^{(n)}\in \mathbb Z_{\geq 0}.
\]
If $a_x^{(n)}\neq 0$, then
\[
\ell(x)\leq n\ell(w).
\]
\end{Lemma}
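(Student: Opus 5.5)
The plan is to prove the statement by induction on $n$. The key input is the standard degree bound for Kazhdan--Lusztig multiplication: if $b_x$ occurs with nonzero coefficient in $b_yb_z$, then $x\leq yz'$ for some $z'\leq z$. Concretely, in the standard basis $b_yb_z$ is a nonnegative combination of $\delta_{y'}\delta_{z'}=\delta_{y'z'}$ with $y'\leq y$ and $z'\leq z$, so $\ell(y'z')\leq \ell(y')+\ell(z')\leq\ell(y)+\ell(z)$.

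The base case $n=1$ is trivial, since $b_w^1=b_w$. For $n=0$ we have $b_w^0=1=b_{\mathrm{id}}$, and the bound holds as well.

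For the inductive step, I would write
\[
b_w^{n}=b_w^{n-1}b_w=\sum_y a_y^{(n-1)}b_yb_w .
\]
By induction, only $y$ with $\ell(y)\leq (n-1)\ell(w)$ contribute. It then suffices to show that every $b_x$ occurring in $b_yb_w$ satisfies $\ell(x)\leq\ell(y)+\ell(w)$.

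To see this, I would expand $b_yb_w$ in the standard basis. Since $b_y=\sum_{y'\leq y}a_{y',y}\delta_{y'}$ and $b_w=\sum_{w'\leq w}a_{w',w}\delta_{w'}$, the product is a nonnegative combination of $\delta_{y'w'}$ with $\ell(y'w')\leq\ell(y)+\ell(w)$. On the other side, I would write the product as $\sum_x p_{y,w}^x b_x$ with $p_{y,w}^x\geq 0$, and each $b_x$ equals $\delta_x$ plus lower terms.

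Now take $x$ of maximal length among those with $p^x_{y,w}\neq0$. The coefficient of $\delta_x$ in $\sum_x p^x_{y,w}b_x$ is at least $p^x_{y,w}>0$, because all coefficients are nonnegative at $v=1$, so no cancellation can occur. Hence $\delta_x$ appears in the standard expansion of $b_yb_w$, which forces $\ell(x)\leq\ell(y)+\ell(w)$. Every other $x'$ in the support has length at most this maximal one, so the bound holds for all of them. Combined with the inductive hypothesis, this gives $\ell(x)\leq n\ell(w)$.

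The only subtle point is this no-cancellation argument. It relies on the positivity of the coefficients $a_{x,w}$ and $p^w_{x,y}$ at $v=1$, both recorded in \autoref{secke:Hecke}. With that in hand, the rest is routine bookkeeping.
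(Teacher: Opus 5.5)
Your proof is correct and rests on the same idea as the paper: positivity at $v=1$ rules out cancellation, so the term $\delta_x$ (coefficient $1$ in $b_x$) must appear in the standard expansion of the product, and every standard term there has length at most the sum of the lengths of the factors. The paper applies this once, directly to $b_w^n$, whose standard terms all have length at most $n\ell(w)$. So your induction on $n$ and your restriction to an $x$ of maximal length are harmless but unnecessary, since the no-cancellation argument already applies to every $x$ in the support.
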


\begin{proof}
Every standard basis term appearing in $b_w$ has length at most $\ell(w)$,
hence every standard basis term appearing in $b_w^n$ has length at most
$n\ell(w)$. If $a_x^{(n)}\neq 0$, then $\delta_x$ appears in the standard
expansion of $b_x$ with coefficient $1$. Since all standard expansions are
nonnegative at $v=1$, this contribution cannot cancel. Therefore $\delta_x$
appears in the standard expansion of $b_w^n$, and so $\ell(x)\leq n\ell(w)$.
\end{proof}

\begin{proof}[Proof of \autoref{thm:general-lower-upper}]
We are now in the situation where $U = W$ is an affine Weyl group. The upper bound is \autoref{lem:upper-bound}. For the lower bound, apply
\autoref{lem:lower-by-standard-sum} to \(X=b_w^n\):
\[
\nu_n\geq
\frac{\nu^\delta(b_w^n)}
{\max_{z\in\operatorname{Supp}_{\mathrm{KL}}(b_w^n)}
\nu^\delta(b_z)}
=
\frac{\beta^n}
{\max_{z\in\operatorname{Supp}_{\mathrm{KL}}(b_w^n)}
\nu^\delta(b_z)}.
\]
Since \(w\) lies in the projective cell, and the span of the projective cell
is a two-sided ideal, every \(z\) in the KL-support of \(b_w^n\) also lies in
the projective cell. By \autoref{lem:length-support},
\[
\ell(z)\leq n\ell(w).
\]
Now apply \autoref{lem:singular rays}. This gives
\[
\nu_n
\geq
B^{-1}\bigl(n\ell(w)\bigr)^{-|\Phi^+|}\beta^n.
\]
Since \(w\) is fixed, this gives the claimed lower bound after changing the
constant.
\end{proof}

\begin{Remark}
The exponent \(|\Phi^+|\) in \autoref{thm:general-lower-upper} is often not optimal. For example,
if only some fundamental weight coordinates grow linearly with \(n\), the
degree detected by Weyl's dimension formula is the number of positive roots
involving at least one of the corresponding simple roots. In rank \(1\) and
rank \(2\), one expects substantially sharper statements in many cases. 
See \autoref{sec:sl2}, \autoref{sec:sl3} and \autoref{S:C2} below.
\end{Remark}

\subsection{Elements accessible via Satake}\label{sec:generalpower}

We now apply the spherical formalism from \autoref{sec:satake}. We use the
same symbol \(\nu\) for the sum of KL coefficients in $\Hecke$ and for the sum
of simple multiplicities in \(\Rep G\). The latter is also the number of
indecomposable summands, since $\Rep G$ is semisimple.

\begin{Theorem}[One-sided longest]\label{prop:satake-bounds}
Assume that \(b_we=b_w\), that \(w\) lies in the projective cell, and $w\neq w_0$.
Then there exist constants \(C_1(w),C_2(w)>0\) such that for all \(n\gg 1\),
\[
C_1(w)\cdot n^{-|\Phi^+|}\cdot\beta^n
\leq
\nu(b_w^n)
\leq
C_2(w)\cdot n^{-|\Phi^+|/2}\cdot\beta^n .
\]
\end{Theorem}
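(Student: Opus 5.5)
The lower bound needs no new input. Since \(w\) lies in the projective cell, \autoref{thm:general-lower-upper} already gives \(C\cdot n^{-|\Phi^+|}\cdot\beta^n\leq \nu(b_w^n)\) for all \(n\geq 1\), so we can take \(C_1(w)=C_w\). All the work is in the upper bound. The plan is to push \(b_w^n\) through the spherical projection \(X\mapsto eX\), lose at most a constant factor \(|W_f|\) in \(\nu\), and then invoke the group asymptotics of \autoref{E:Groups} for \(V_w\).

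\emph{Step 1 (the support of \(b_w^n\) is right-longest).} Write \(b_w^n=\sum_x a_x b_x\) with \(a_x\in\N\). Since \(b_we=b_w\), we have \(b_w^nb_{s_i}=2b_w^n\) for each \(1\leq i\leq r\). Each \(b_xb_{s_i}\) is KL-positive and contains either \(2b_x\) (if \(xs_i<x\)) or \(b_{xs_i}\) (if \(xs_i>x\)), and there are no cancellations. Comparing the two sides therefore forces \(xs_i<x\) for every \(x\) with \(a_x\neq0\). By \autoref{lem:longestrepiff}, every such \(x\) then satisfies \(b_xe=b_x\), so \(eb_x\in\Hecke_{\mathrm{sph}}\).

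\emph{Step 2 (\(\nu\) does not collapse under \(e\)).} This is the step I expect to be the main obstacle. For each such \(x\) I claim \(\nu(eb_x)\geq 1/|W_f|\). Indeed, \(eb_x=|W_f|^{-1}b_{w_0}b_x\), and \(b_{w_0}b_x\) is a nonzero \(\N\)-combination of KL basis elements. For instance, \(b_{y}\) occurs in it for \(y\) the longest element of \(W_fx\), by the standard Demazure product argument. Hence \(\nu(b_{w_0}b_x)\geq 1\). Under \autoref{lem:satake}, \(b_{m_\lambda}\mapsto|W_f|[L(\lambda)]\), so the summand count of the Satake image of \(eb_x\) equals \(|W_f|\,\nu(eb_x)\geq 1\), and the same holds for every individual \(x\). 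Summing with the coefficients \(a_x\), and using \(eb_w^n=(eb_w)^n\mapsto V_w^{\otimes n}\) (\autoref{lem:satake-one-sided}), gives
\[
\nu(b_w^n)=\sum_x a_x\leq \sum_x a_x\cdot|W_f|\,\nu(eb_x)=|W_f|\,\nu(eb_w^n)=\nu\bigl(V_w^{\otimes n}\bigr),
\]
where the last \(\nu\) counts simple summands in \(\Rep G\).

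\emph{Step 3 (group asymptotics).} The image \(V_w\) of \(eb_w=|W_f|^{-1}b_{w_0}b_w\) is an honest representation, since its coefficients lie in \(\N\) after Satake. By \autoref{lem:satake-one-sided}, \(\dim V_w=\beta\). Because \(w\neq w_0\) and \(w\) lies in the projective cell, \(w\notin W_f\), so \(eb_w\neq e\) has a KL component \(b_{m_\lambda}\) with \(\lambda\neq0\). Thus \(V_w\) contains a nontrivial simple \(L(\lambda)\). As \(G\) is adjoint and (since \(W\) is irreducible) simple, the kernel of \(L(\lambda)\) is a proper normal subgroup, hence central, hence trivial, so \(V_w\) is faithful. \autoref{E:Groups} then gives \(\nu(V_w^{\otimes n})\sim C_{V_w}\cdot n^{-|\Phi^+|/2}\cdot\beta^n\). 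Hence \(\nu(b_w^n)\leq C_2(w)\cdot n^{-|\Phi^+|/2}\cdot\beta^n\) for \(n\gg1\), with, for example, \(C_2(w)=2C_{V_w}\).
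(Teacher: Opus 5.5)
Your proposal is correct and is essentially the paper's argument: the lower bound is quoted from \autoref{thm:general-lower-upper}, and the upper bound comes from $\nu(x)\le |W_f|\,\nu(ex)$ for KL-positive $x$ (each $b_{w_0}b_z$ is a nonzero $\N$-combination of KL basis elements), followed by Satake and the tensor power asymptotics for the faithful representation $V_w$. Your Step~1 is not needed, since that comparison holds for any KL-positive $x$ without passing to Satake images summand by summand, but it does no harm; your normalization $|W_f|\,\nu(eb_w^n)=\nu(V_w^{\otimes n})$ is actually more careful than the paper's, which carries an extra harmless factor of $|W_f|$.
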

\begin{proof}
We first compare \(b_w^n\) with its spherical image. Let \(x\in \Hecke\) have
nonnegative KL expansion. Then \(b_{w_0}x\) again has nonnegative KL
expansion, and each product \(b_{w_0}b_z\) contains at least one KL summand.
Therefore
\[
\nu(b_{w_0}x)\geq \nu(x),
\]
and hence
\[
\nu(ex)\geq \frac{1}{|W_f|}\nu(x).
\]
Applying this to \(x=b_w^n\), and using
\autoref{lem:satake-one-sided}, gives
\begin{gather}\label{eq:one-sided-satake-comparison}
\nu(b_w^n)
\leq
|W_f|\nu(eb_w^n)
=
|W_f|\nu\bigl((eb_w)^n\bigr)
=
|W_f|\nu(V_w^{\otimes n}).
\end{gather}

By \autoref{lem:satake-one-sided}, \(\dim(V_w)=\beta\). 
Moreover, $V_w$ is faithful: since \(G\) is complex adjoint simple, every nontrivial finite dimensional
\(G\)-representation is faithful (the kernel is a closed normal subgroup, and
any finite normal subgroup of a connected algebraic group is central; but
\(G\) is adjoint, so its center is trivial). We can then use
\cite[Theorem 2.5]{CEO19}, applied with \(s=0\), and get
\[
\nu(V_w^{\otimes n})
\sim
C_w\cdot n^{-|\Phi^+|/2}\cdot \beta^n
\]
for some \(C_w>0\). This proves the upper bound.

The lower bound is exactly \autoref{thm:general-lower-upper}, applied to
\(b_w\), since \(\nu^\delta(b_w)=\beta\).
\end{proof}

\begin{Remark}\label{rem:uniform-false}
A natural guess would be that the finite multiplicative correction appearing in the above
comparison \autoref{eq:one-sided-satake-comparison} is uniformly bounded in terms of the affine type. This is false in general - there are cases where no such multiplicative correction exists in the lower bound.

Indeed, cf. \autoref{S:C2}, in affine type \(\widetilde C_2\), with affine node \(3\), experiments
inside the subregular cell, containing elements with a unique reduced
expression, indicate that for
\[
w_k=3(1213)^k
\]
one has
\[
\ell(w_k)=4k+1,
\quad
\nu(b_{w_k}b_{w_0})=6k-4
\quad(k\geq 2).
\]
Thus this quantity grows linearly with \(k\).

On the other hand, boundedness holds in some important regimes; for example, in
type \(\widetilde A_2\) for the beyond-the-wall family treated in
\autoref{prop:uniform6}. This allows us to improve the theorem in certain cases, e.g. replacing $\Phi$ by $\Phi/2$.
\end{Remark}

\begin{Theorem}[Two-sided longest]\label{prop:satake-two-sided}
Assume that $w$ is two-sided longest (so that \(b_w\in e\Hecke e\)), and $w\neq w_0$. Then
\[
\nu(b_w^n)
\sim
C_w\cdot n^{-|\Phi^+|/2}\cdot\beta^n
\]
for some constant \(C_w>0\).
\end{Theorem}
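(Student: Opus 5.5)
The plan is to make the Satake comparison from the proof of \autoref{prop:satake-bounds} exact, rather than a one-sided inequality. Since $w$ is two-sided longest, \autoref{L:TwoSided} gives $b_w\in\Hecke_{\mathrm{sph}}$. Write $w=m_\lambda$ for a dominant $\lambda\in Q^+$; we have $\lambda\neq 0$ because $w\neq w_0$. By \autoref{lem:satake}, $b_w$ maps to $|W_f|[L(\lambda)]$. Hence $b_w^n$ maps to $|W_f|^n[L(\lambda)^{\otimes n}]$, and $\beta=\nu^\delta(b_w)=|W_f|\dim L(\lambda)$.

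The key step is to identify $\nu$ on $\Hecke_{\mathrm{sph}}$ with $\nu$ on the representation side. Decompose in $\Rep G$:
\[
L(\lambda)^{\otimes n}\cong\bigoplus_{\mu}L(\mu)^{\oplus m_\mu(n)}.
\]
Pulling back along the Satake isomorphism gives
\[
b_w^n=|W_f|^{n-1}\sum_\mu m_\mu(n)\, b_{m_\mu},
\]
because each $L(\mu)$ corresponds to $b_{m_\mu}/|W_f|$. The $b_{m_\mu}$ are distinct KL basis elements, so this is the KL expansion of $b_w^n$. It follows that
\[
\nu(b_w^n)=|W_f|^{n-1}\,\nu\bigl(L(\lambda)^{\otimes n}\bigr).
\]
The only point to check is that the KL expansion of an element of $\Hecke_{\mathrm{sph}}$ is unique and agrees with this one. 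This holds since the KL basis is a basis, and by \autoref{L:TwoSided} the $b_{m_\mu}$ span $\Hecke_{\mathrm{sph}}$.

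Next we apply \cite[Theorem 2.5]{CEO19} with $s=0$, exactly as in the proof of \autoref{prop:satake-bounds}. The representation $L(\lambda)$ is nontrivial, hence faithful, since $G$ is adjoint simple. This gives
\[
\nu\bigl(L(\lambda)^{\otimes n}\bigr)\sim C\cdot n^{-|\Phi^+|/2}\cdot (\dim L(\lambda))^n.
\]
Multiplying by $|W_f|^{n-1}$ turns $(\dim L(\lambda))^n$ into $\beta^n/|W_f|$. Therefore
\[
\nu(b_w^n)\sim \tfrac{C}{|W_f|}\cdot n^{-|\Phi^+|/2}\cdot\beta^n,
\]
with $C_w=C/|W_f|$.

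I expect the main obstacle to be the hypotheses of \cite{CEO19}. The cited theorem may require a condition on the center, or a lattice or support condition, to get an honest $\sim$ rather than a periodic asymptotic. In general the weights of $L(\lambda)^{\otimes n}$ lie in a coset of the root lattice that could depend on $n$ mod the order of the center. Here that issue disappears: $G$ is adjoint, so all weights lie in $Q$, and faithfulness ensures no periodicity. I would verify this is precisely the $s=0$ hypothesis used in \autoref{prop:satake-bounds}.
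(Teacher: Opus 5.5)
Your proposal is correct and follows the paper's route. You pass to $\Hecke_{\mathrm{sph}}$ via \autoref{L:TwoSided}, transport $b_w^n$ through the Satake isomorphism to a tensor power in $\Rep G$, and apply \cite[Theorem 2.5]{CEO19} using faithfulness of nontrivial representations of the adjoint group.

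Your bookkeeping $\nu(b_w^n)=|W_f|^{n-1}\nu(L(\lambda)^{\otimes n})$ is more precise than the paper's $\nu(b_w^n)=\nu(V_w^{\otimes n})$, which is off by the constant factor $|W_f|$ because $b_{m_\mu}\mapsto |W_f|[L(\mu)]$. This only changes the value of $C_w$.
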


\begin{proof}
Since \(b_w\in e\Hecke e=\Hecke_{\mathrm{sph}}\), Satake associates to \(b_w\)
a representation \(V_w\in\Rep G\). Under this identification,
\(\nu^\delta\) is the dimension homomorphism, so
\[
\dim(V_w)=\nu^\delta(b_w)=\beta.
\]
Since Satake is monoidal, \(b_w^n\) corresponds to \(V_w^{\otimes n}\). Hence
\[
\nu(b_w^n)=\nu(V_w^{\otimes n}).
\]
The claimed asymptotic now follows, as for one-sided longest elements, directly from
\cite[Theorem 2.5]{CEO19}.
\end{proof}

Thus, for two-sided longest elements, the growth problem reduces directly to
tensor powers in \(\Rep G\), with no need for the one-sided sandwich argument.

\section{The case of SL2, or affine type \texorpdfstring{$A_1$}{A1}}\label{sec:sl2}

We now consider the Coxeter group of affine $A_1$-type. This leads to adjoint group
\(G=PGL2\), which is isomorphic
to \(SO3\). Let the affine Coxeter generators be $s$ and $t$, so that
\[
\widetilde A_1=\langle s,t\mid s^2=t^2=id\rangle.
\]

Every element is represented by one of
\[
(st)^k,\quad (st)^ks,\quad (ts)^k,\quad (ts)^kt,
\]
where $k\in \N$. By symmetry between $s$ and $t$, it suffices to consider the
two families
\[
w_k^{\mathrm{pal}}:=(st)^ks,
\quad
w_k^{\mathrm{np}}:=(ts)^k.
\]
These are the palindromic and nonpalindromic words, respectively.

\subsection{Growth of polynomials}\label{sec:polaa}
We start by the general question from Section~\ref{sec:poly} for this example.
For a ray $r$ starting at the origin in $\R\cong \R\otimes_{\R}Q$, we are thus interested in
\[
p_l:=p_l(r)=\nu^\delta(b_w),
\]
where $w$ is the alcove of length $l$ met by the ray. We study this as a
function of the length.
There are only two rays, namely $\R_{\geq 0}$ and $\R_{\leq 0}$,
for which $p_l$ is the sequence
\[
\big(\dots,\nu^\delta(b_{w_k^{\mathrm{pal}}}),
\nu^\delta(b_{w_k^{\mathrm{np}}}),
\nu^\delta(b_{w_{k+1}^{\mathrm{pal}}}),
\nu^\delta(b_{w_{k+1}^{\mathrm{np}}}),
\nu^\delta(b_{w_{k+2}^{\mathrm{pal}}}),
\nu^\delta(b_{w_{k+2}^{\mathrm{np}}}),
\dots
\big),
\]
or its reverse. 

\begin{Proposition}
For either ray in affine type $\widetilde A_1$, one has $p_0=1$ and
\[
p_l=2l
\quad
(\Rightarrow p_l\sim 2l).
\]
\end{Proposition}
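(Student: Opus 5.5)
The plan is to compute the Kazhdan--Lusztig basis of $\widetilde A_1$ explicitly at $v=1$. The infinite dihedral group has trivial Kazhdan--Lusztig polynomials: for every $w\in\widetilde A_1$ one has
\[
b_w=\sum_{x\leq w}\delta_x ,
\]
so $\nu^\delta(b_w)=\#\{x\in \widetilde A_1\mid x\leq w\}$.

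First I will justify the claim $P_{x,w}=1$, so that all coefficients $a_{x,w}$ equal $1$. I would argue by induction on $\ell(w)$, using the recursion $b_sb_w=b_{sw}+\sum_{z}\mu(z,w)b_z$ for $sw>w$. In the infinite dihedral group the $\mu$-terms are easy to control. For $w=s\,t\,s\cdots$ of length $k$ with $k\geq 2$, one finds
\[
b_sb_{tw'}=b_{stw'}+b_{w'}
\]
where $w'$ is two shorter. Alternatively one can cite the standard fact, valid for any dihedral group, that all Kazhdan--Lusztig polynomials equal $1$.

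Next I will count the Bruhat interval. In $\widetilde A_1$ every element of length $<l$ lies below any element of length $l$: each element of length $j<l$ appears as a subword of the unique reduced expression of $w$. There are $2$ elements of each positive length and $1$ of length $0$. Hence for $\ell(w)=l\geq 1$,
\[
\#\{x\le w\}=1+2(l-1)+1=2l ,
\]
counting the identity, the two elements of each length $1,\dots,l-1$, and $w$ itself. For $l=0$ the only element below the identity is the identity, so $p_0=1$.

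It remains to match the ray indexing: the alcoves met by $\R_{\geq0}$ alternate between $w^{\mathrm{pal}}_k$ and $w^{\mathrm{np}}_k$, and they realize every length $l$ exactly once. So $p_l=2l$ for $l\ge1$, which gives $p_l\sim 2l$. By the symmetry $s\leftrightarrow t$ the same holds for $\R_{\leq0}$.

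The only step needing care is the triviality of the KL polynomials. This is what ensures there are no coefficients larger than $1$; I would just cite the dihedral case or run the short induction above.
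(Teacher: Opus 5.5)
Your proposal is correct and follows the paper's proof: use (or cite) the triviality of Kazhdan--Lusztig polynomials for the infinite dihedral group to get $\nu^\delta(b_w)=\#[e,w]$, then count the Bruhat interval as $1+2(l-1)+1=2l$. One small slip in your optional induction is that the rule $b_sb_{tw'}=b_{stw'}+b_{w'}$ needs $w'\neq e$ (i.e.\ $k\geq 3$), because $b_sb_t=b_{st}$ has no extra term; this is harmless since citing the dihedral fact suffices.
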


\begin{proof}
For the infinite dihedral group, all the KL polynomials at $v=1$ are equal
to $1$, see e.g. \cite{duCl-positivity-finite-hecke}. Hence
\[
b_w=\sum_{x\leq w}\delta_x
\]
at $v=1$, and therefore
\[
\nu^\delta(b_w)=\#\{x\in W\mid x\leq w\}.
\]
If $\ell(w)=l$, then the Bruhat interval $[e,w]$ consists of the identity, $w$ itself,
together with the two alternating words of each length $1,\dots,l-1$. Thus
\[
\#[e,w]=2l.
\]
Therefore $p_l=2l$, as claimed.
\end{proof}

\subsection{Growth of tensor powers}

Explicitly, here $W_f=\{id,s\}$. Thus the spherical idempotent is
$b_s/2\in \Hecke_{\mathrm{sph}}$. A basis of $\Hecke_{\mathrm{sph}}$ is given by $b_w$ such that $w$ is a longest representative of its coset $W_f\setminus W / W_f$, as in \autoref{L:TwoSided}. In this case, it is easily checked that these are all palindromic words starting (and thus ending) with $s$. Notate the $w$ of length $2k+1$ satisfying $b_w \in \Hecke_{\mathrm{sph}}$ by $w_k^{\mathrm{pal}}$.

The simple PGL2-representation attached to
$w=w_k^{\mathrm{pal}}$ is $L(2k)$, of dimension
\[
\dim L(2k)=2k+1.
\]
For example, $w=sts$ corresponds to the three-dimensional representation of
\(PGL2\), or equivalently to the defining representation of
$SO3$.

\begin{Theorem}\label{thm:a1exact}
For every $w\neq e,s,t$, there exists a constant $C_w>0$ such that
\[
\nuop\bigl(b_{w}^n\bigr)\sim C_w\cdot n^{-1/2}\cdot\beta^n
\]
More precisely:
\begin{enumerate}
\item For the palindromic family,
\[
b_{(st)^ks}\longmapsto 2\cdot L(2k)
\]
under Satake, and hence
\[
\nuop\bigl(b_{(st)^ks}^n\bigr)
\sim
\nuop\bigl((2\cdot L(2k))^{\otimes n}\bigr)
\sim
\sqrt{3/\big((2k^2+1)\pi\big)}\cdot n^{-1/2}\cdot (4k+2)^n.
\]

\item For the nonpalindromic family,
\[
\nuop\bigl(b_{(ts)^k}^n\bigr)
\sim
\sqrt{3/\big(2k(k+2)\pi\big)}\cdot n^{-1/2}\cdot (4k)^n.
\]
\end{enumerate}
In particular, in both families the polynomial correction is the rank-one
factor $n^{-1/2}$.
\end{Theorem}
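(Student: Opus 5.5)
For the palindromic family $w=(st)^ks$ the plan is to go through Satake directly. For the nonpalindromic family $w=(ts)^k$ the plan is to reduce to the spherical case via a one-sided factorisation, and to replace the sandwich of \autoref{prop:satake-bounds} by an exact count. The reason this is possible is that in $\widetilde A_1$ all KL products can be written down explicitly: the infinite dihedral group has trivial KL polynomials at $v=1$.

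\emph{Palindromic family.} By \autoref{L:TwoSided} and the discussion above, $w_k^{\mathrm{pal}}$ is two-sided longest. So \autoref{lem:satake} gives $b_{(st)^ks}\mapsto 2[L(2k)]$. Since Satake is an algebra isomorphism and $\nu$ on both sides is the sum of basis coefficients, we get
\[
\nu\bigl(b_{(st)^ks}^n\bigr)=2^n\,\nu\bigl(L(2k)^{\otimes n}\bigr).
\]
This is the setting of \autoref{prop:satake-two-sided}, but here I would compute the constant by hand.
\begin{itemize}
\item All weights of $L(2k)^{\otimes n}$ are even. Hence the number of simple summands equals the multiplicity of the weight $0$.
\item In units of $\alpha$, that multiplicity is the number of ways to write $0$ as a sum of $n$ independent uniform variables on $\{-k,\dots,k\}$, times $(2k+1)^n$.
\item The local central limit theorem on the lattice $\Z$ then gives $(2k+1)^n/\sqrt{2\pi n\sigma^2}$, with $\sigma^2$ the variance of one step.
\end{itemize}
Substituting $\sigma^2$ produces the constant in part (a), with $\beta=2(2k+1)$. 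I would double-check this constant against the normalisation in \autoref{E:Groups}.

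\emph{Nonpalindromic family.} Here $w=(ts)^k$ ends in $s$, so $ws<w$ and $b_we=b_w$ by \autoref{lem:longestrepiff}.
\begin{itemize}
\item \textbf{Spherical image.} Use $e=b_s/2$ and the dihedral rule $b_sb_x=b_{sx}+b_{stx}$ for $sx>x$ (valid because KL polynomials are trivial). This gives
\[
eb_w=\tfrac12\bigl(b_{(st)^ks}+b_{(st)^{k-1}s}\bigr)\longmapsto [L(2k)]+[L(2k-2)]=:V_w,
\]
which is faithful and of dimension $4k=\beta$, as predicted by \autoref{lem:satake-one-sided}.
\item \textbf{Factorisation.} We have $b_w^n=b_w(eb_w)^{n-1}$. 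Expanding $(eb_w)^{n-1}$ via Satake gives
\[
(eb_w)^{n-1}=\sum_j m_j(n-1)\,2^{n-1}\cdot\tfrac12\, b_{(st)^js},
\]
where $m_j(n-1)$ is the multiplicity of $L(2j)$ in $V_w^{\otimes(n-1)}$.
\item \textbf{Local product rule.} Next, compute $\nu(b_{(ts)^k}b_{(st)^js})$ explicitly with the dihedral multiplication rule. This is the Clebsch--Gordan-type rule in which $b_xb_y$, for $x$ ending and $y$ starting in $s$, is $2\sum b_z$ over a range of lengths of size $\min(\ell(x),\ell(y))$-ish. The expected result is a constant $c_k$ for all $j\geq k$, and some bounded value for $j<k$.
\end{itemize}
Granting this, we obtain
\[
\nu(b_w^n)=c_k2^{n-2}\,\nu\bigl(V_w^{\otimes(n-1)}\bigr)+O\Bigl(2^n\sum_{j<k}m_j(n-1)\Bigr).
\]
The error term is negligible: by the same local limit theorem, multiplicities of any fixed simple are $O(n^{-3/2}(4k)^n)$, while $\nu(V_w^{\otimes n})$ is of order $n^{-1/2}(4k)^n$. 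Applying \cite[Theorem 2.5]{CEO19} (or again the local CLT, now for steps distributed according to the weights of $L(2k)\oplus L(2k-2)$) yields the $n^{-1/2}(4k)^n$ asymptotic. The constant is then $c_k$ times the Gaussian constant, adjusted for the shift $n\mapsto n-1$.

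\emph{Expected obstacle.} The main difficulty is the exact local product rule $\nu(b_{(ts)^k}b_{(st)^js})=c_k$ for $j\geq k$, including the boundary effects at small $j$, and then matching the resulting constant with the one stated in part (b). I would first verify the rule for $k=1,2$ by direct multiplication. After that, I would prove it by induction on $k$ using $b_tb_{(st)^{k-1}\cdots}$-type recursions.
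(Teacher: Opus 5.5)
Your overall plan is sound for the qualitative claim. Part (a) is the paper's argument (Satake plus rank-one tensor-power asymptotics), and the factorisation $b_w^n=b_w(eb_w)^{n-1}$ is a legitimate alternative in part (b). However, you lose track of the factor $|W_f|=2$, and the explicit constants cannot come out as you assert.

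Since $b_{m_\lambda}\mapsto 2[L(\lambda)]$, one has $\nu(h)=\tfrac12\,\nu(\mathrm{Sat}(h))$ for $h\in\mathcal H_{\mathrm{sph}}$. Consequences:
\begin{enumerate}
\item In (a), $\nu(b_{(st)^ks}^n)=2^{n-1}\nu(L(2k)^{\otimes n})$, not $2^{n}\nu(L(2k)^{\otimes n})$; your version already fails at $n=1$. For the same reason, the first $\sim$ in the statement of (a) is off by a factor $2$.
\item In (b), $eb_w\mapsto V_w$ with no factor $2$, so $(eb_w)^{n-1}=\tfrac12\sum_j m_j(n-1)\,b_{(st)^js}$, without your $2^{n-1}$. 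As written, your formula $c_k2^{n-2}\nu(V_w^{\otimes(n-1)})$ grows like $(8k)^n$, contradicting $\nu_n\le\beta^n=(4k)^n$ from \autoref{lem:upper-bound}.
\item The dihedral rule is $b_sb_x=b_{sx}+b_{tx}$ for $x$ starting with $t$ and $\ell(x)\ge 2$. The element $stx$ has the wrong length, although your resulting formula for $eb_w$ is correct.
\end{enumerate}

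The more serious problem is the constants. Fix the normalisation and apply the local limit theorem with $\sigma^2=k(k+1)/3$ for $L(2k)$ and $\sigma^2=(2k^2+1)/6$ for $V_w=L(2k)\oplus L(2k-2)$. This gives
\[
\nu\bigl(b_{(st)^ks}^n\bigr)\sim\sqrt{\tfrac{3}{8\pi k(k+1)}}\,n^{-1/2}(4k+2)^n,
\qquad
\nu\bigl(b_{(ts)^k}^n\bigr)\sim\sqrt{\tfrac{3}{4\pi(2k^2+1)}}\,n^{-1/2}(4k)^n .
\]
This is easy to confirm for $k=1$:
\begin{enumerate}
\item $\nu(b_{sts}^n)=2^{n-1}T_n$, where $T_n$ is the central trinomial coefficient. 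The constant is $\sqrt{3/(16\pi)}$, not the stated $1/\sqrt{\pi}$.
\item $\nu(b_{ts}^n)=\tfrac12\binom{2n}{n}$. The constant is $1/(2\sqrt{\pi})$, not the stated $1/\sqrt{2\pi}$.
\end{enumerate}
So your sentence that substituting $\sigma^2$ ``produces the constant in part (a)'' is false, and the matching you flag as the obstacle in (b) cannot be achieved. The stated constants appear to be swapped between (a) and (b): the constant in (a) is exactly the Gaussian constant of $\nu(V_w^{\otimes n})$. They also miss the factor $1/|W_f|$. The paper's proof, which reads the constants off \cite[Theorem 2.5]{CEO19}, has the same defect. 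What your argument, and the paper's, actually proves is $\nu(b_w^n)\sim C_w\,n^{-1/2}\beta^n$ with the corrected constants above.

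For (b), the paper uses a shortcut you could adopt. Every KL summand $b_x$ of $b_w^n$ has $x$ ending in $s$. For such $x$ one has $b_sb_x\in\{2b_x,\;b_{sx}+b_{tx}\}$, so $\nu(eb_x)=\nu(b_x)$. Together with \autoref{lem:satake-one-sided}, this gives the exact identity $\nu(b_w^n)=\nu((eb_w)^n)=\tfrac12\nu(V_w^{\otimes n})$, with no local product rule and no error term. Your route reaches the same identity. Indeed $\nu(b_{(ts)^k}b_{(st)^js})=2\min(2k,2j+1)$, so $c_k=4k$. By Clebsch--Gordan, $\nu(L(2j)\otimes V_w)=2\min(2k,2j+1)$, hence $\sum_j\min(2k,2j+1)\,m_j(n-1)=\tfrac12\nu(V_w^{\otimes n})$. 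So your route works, but it takes strictly more effort.
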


\begin{proof}
We treat the two families separately.

For the palindromic family, one is in the genuinely spherical case. Under
Satake, writing a representation to mean its isomorphism class in the
Grothendieck ring, one has
\[
b_{(st)^ks}\longmapsto 2\cdot L(2k),
\]
where $L(2k)$ denotes the simple $PGL2$-module of highest weight $2k$.
Hence the claim follows directly from the rank-one tensor power asymptotics in
\cite[Theorem 2.5]{CEO19}, namely
\[
\nuop\bigl((2\cdot L(2k))^{\otimes n}\bigr)
\sim
\sqrt{3/\big((2k^2+1)\pi\big)}\cdot n^{-1/2}\cdot (4k+2)^n.
\]

For the nonpalindromic family, one is in the one-sided longest element
situation. Using $|W_f|=2$, the spherical idempotent $e=b_s/2$, and the fusion
rules
\begin{align*}
\frac{b_s}{2}b_{(ts)^k}
&=
\frac{b_{s(ts)^k}+b_{s(ts)^{k-1}}}{2},\\
b_{(ts)^k}\frac{b_s}{2}
&=
b_{(ts)^k},
\end{align*}
which are well known and can, for example, be found in
\cite[Section 3]{Tu-sandwich-cellular}, one obtains
\[
\nuop\Bigl(\frac{b_sb_w}{2}\Bigr)=\nuop(b_w)
\quad\text{for }w\notin\{s,t\}.
\]
Moreover, \autoref{lem:satake-one-sided} directly gives
\[
\Bigl(\frac{b_s}{2}b_{(ts)^k}\Bigr)^n
=
\frac{b_s}{2}b_{(ts)^k}^n,
\]
since words ending in $s$ are at least one-sided longest. The corresponding
$PGL2$-representation is thus
\[
L(2k)\oplus L(2k-2).
\]
Applying again \cite[Theorem 2.5]{CEO19} yields
\[
\nuop\bigl(b_{(ts)^k}^n\bigr)
\sim
\sqrt{3/\big(2k(k+2)\pi\big)}\cdot n^{-1/2}\cdot (4k)^n.
\]
This proves the theorem.
\end{proof}

\section{The case of SL3, or affine type \texorpdfstring{$A_2$}{A2}}\label{sec:sl3}

We now consider the Coxeter group of affine $A_2$-type. This leads to adjoint group
\(G=PGL3\). Let the finite Coxeter generators be $1,2$, and the affine reflection be $3$. Our presentation for $\widetilde{A_2}$ is
\[
\widetilde A_2=\langle 1,2,3\mid 121=212,\ 232=323,\ 131=313,\ 1^2=2^2=3^2=id\rangle.
\]
As in type \(\Aone\), the symbols \(1\), \(2\) and \(3\) are exchangeable, and
below we will only treat things up to that symmetry. By \autoref{ex:finite}, we will again 
assume that $w\in W$ contains at least one of each of the generators $1,2,3$. 

\begin{Remark}
Despite the results being the same no matter how we label the generators in this situation, the choice of finite generators above must be made so as to invoke the Satake isomorphism. 
\end{Remark}

\subsection{SL3 geometry}\label{sec:SL3-geo}

It is useful to keep the usual alcove picture of \(\Atil\) in mind, cf. \autoref{fig:a2-alcoves}. 

\begin{Remark}
All of our alcove pictures are from Gibson's brilliant online illustrations on Lievis \url{https://www.jgibson.id.au/lievis}.
\end{Remark}

The affine
Coxeter arrangement is the triangular tiling of the plane. The alcoves are the
small triangles, and crossing a wall labeled \(i\in\{1,2,3\}\) corresponds to
right multiplication by the simple reflection \(i\). Thus a reduced word records
a minimal gallery from the fundamental alcove to the alcove indexed by the
corresponding element of \(W\).

\begin{figure}[ht]\label{fig:A2}
\centering
\includegraphics[height=6cm]{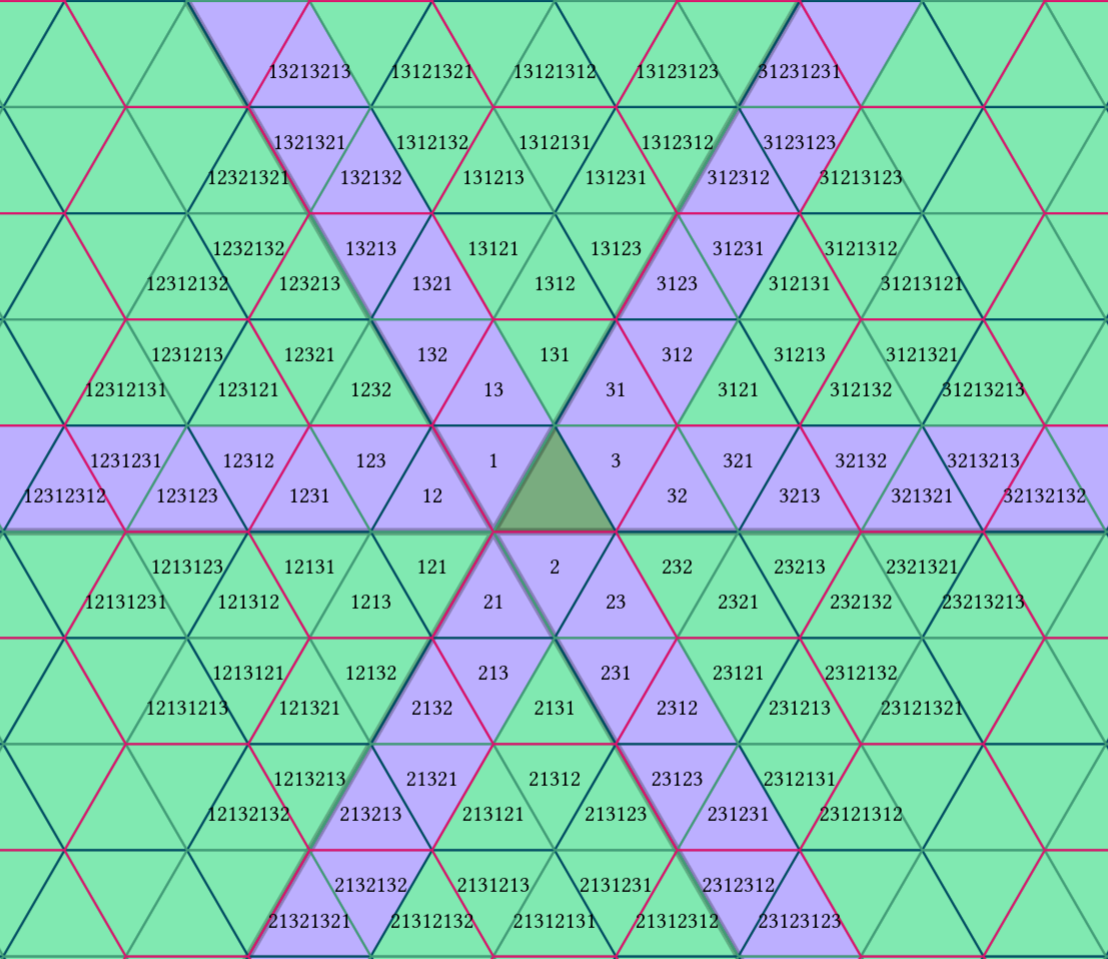}
\includegraphics[height=6cm]{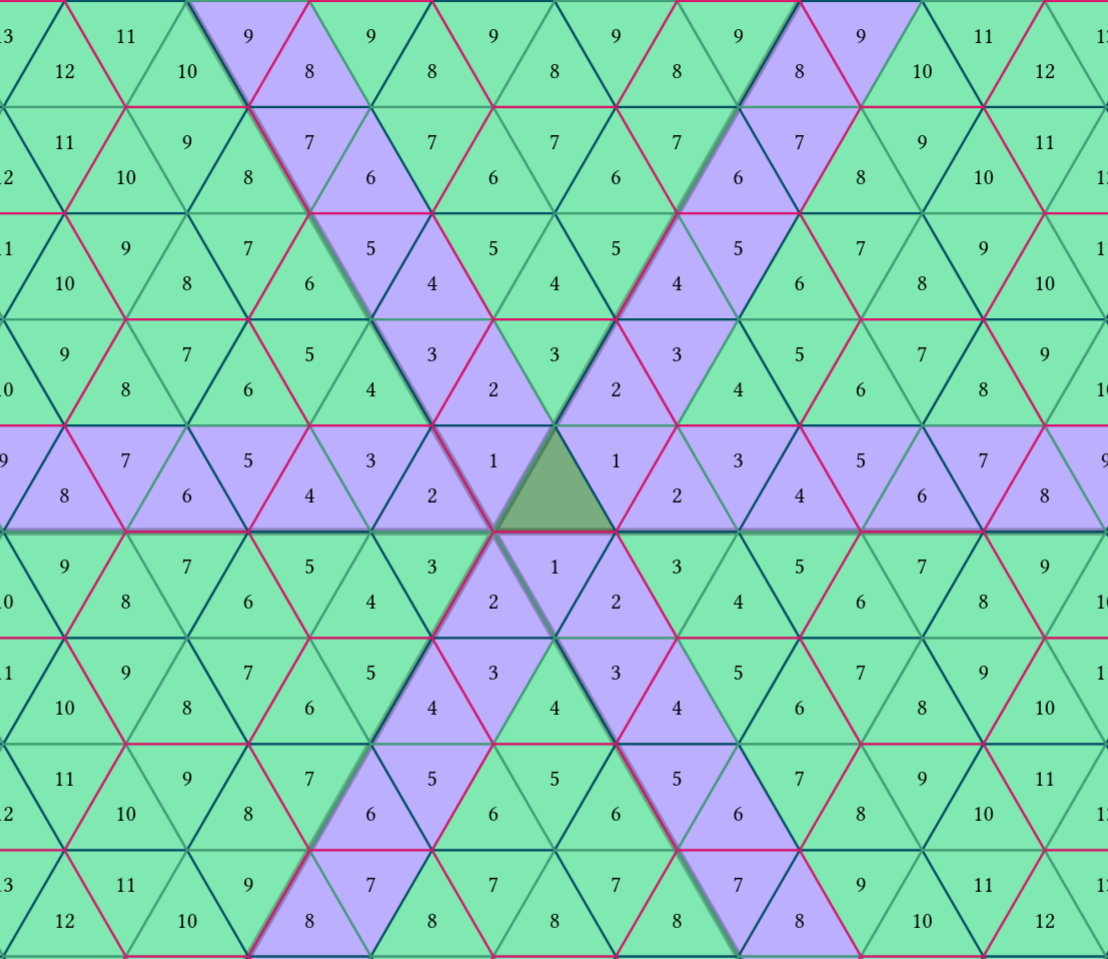}
\includegraphics[height=6cm]{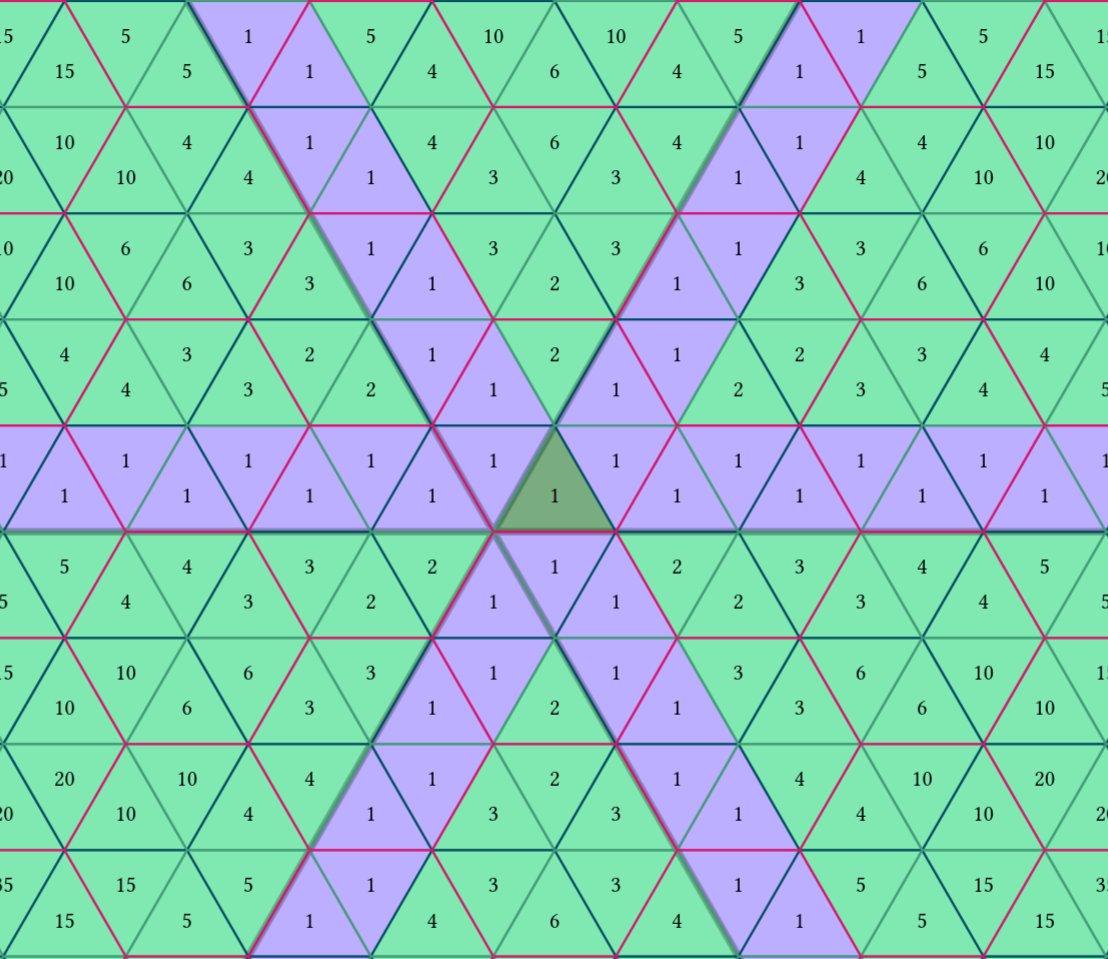}
\includegraphics[height=6cm]{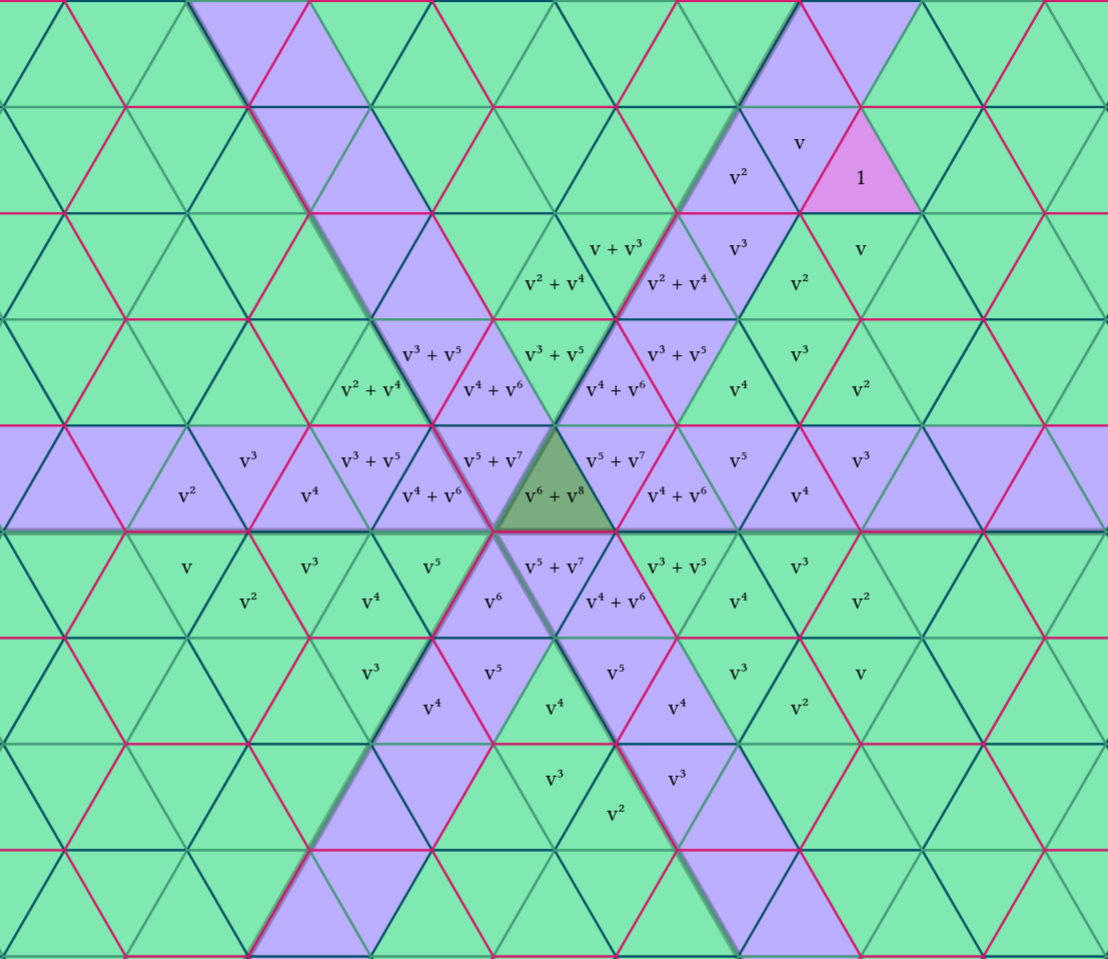}
\caption{The affine \(\widetilde A_2\) alcove picture, with hyperplane colors
\(1=\) green, \(2=\) red, and \(3=\) blue. In addition to the alcoves, the
picture shows the two-sided KL cells: the shaded central triangle is the trivial
cell, i.e. the fundamental alcove; the purple belt is the cyclic wall; and the
green cones form the projective cell. Each alcove is labeled by information
about the corresponding element \(w\in W\): northwest gives a reduced expression,
northeast the number of reduced expressions, southwest the length, i.e. the
minimal number of walls crossed from the fundamental alcove, and southeast an
example of a KL polynomial for the shaded alcove.}
\label{fig:a2-alcoves}
\end{figure}

There are three geometric regions that we use below. The green shaded region is referred to as \textbf{projective cell, big cell}, and the area \textbf{beyond-the-wall}. The purple shaded region is the region \textbf{beyond-the-wall}, or \textbf{cyclic} section. First, the trivial (central)
cell consists of the fundamental alcove. Second, the `on-the-wall' region (purple belt) consists
of the alcoves obtained by adding generators cyclically (geometrically, this is following a wall). These are represented,
up to cyclic rotation and reversal, by the words
\[ 
x_1 = 1,\quad x_2 = 12,\quad x_3 = 123,\quad x_4 = 1231,\quad x_5 = 12312,\quad \dots,
\]
and in general, for $j\geq 1$, define
\[
x_j:=123123\cdots
\]
to be the cyclic word of length $j$, starting in $1$ and continuing in the order
\[
1\to 2\to 3\to 1\to 2\to 3\to \cdots.
\]
From here on, we will refer to these as ``cyclic" elements.

Third, the projective, or beyond-the-wall, cell (green cones) consists of the alcoves
lying beyond this wall region. This is the large two-sided cell relevant for the
asymptotic growth below.
We follow the notation of \cite{LiPa}, labeling simple reflections modulo $3$, and
write
\[
\theta(m,n)=123\cdots(2m+1)(2m+2)(2m+1)2m\cdots(2m-2n+1)
\]
for some beyond-the-wall words. As seen in \cite{LiPa}, these describe the projective cell up to slight variations. To ease notation, we will use \(y_j\) and \(z_j\) for the projective elements adjacent to the cyclic wall, as displayed in \autoref{fig:a2-words},
where $j \geq 4$. From here, we will refer to all beyond-the-wall elements as `projective'.

The conventions are illustrated in \autoref{fig:a2-words}. The same illustration shows the Satake elements, for which \autoref{sec:satake} gives the growth rate. These make up $\approx 1/6$ of the whole picture.

\begin{figure}[ht]
\centering
\includegraphics[height=6cm]{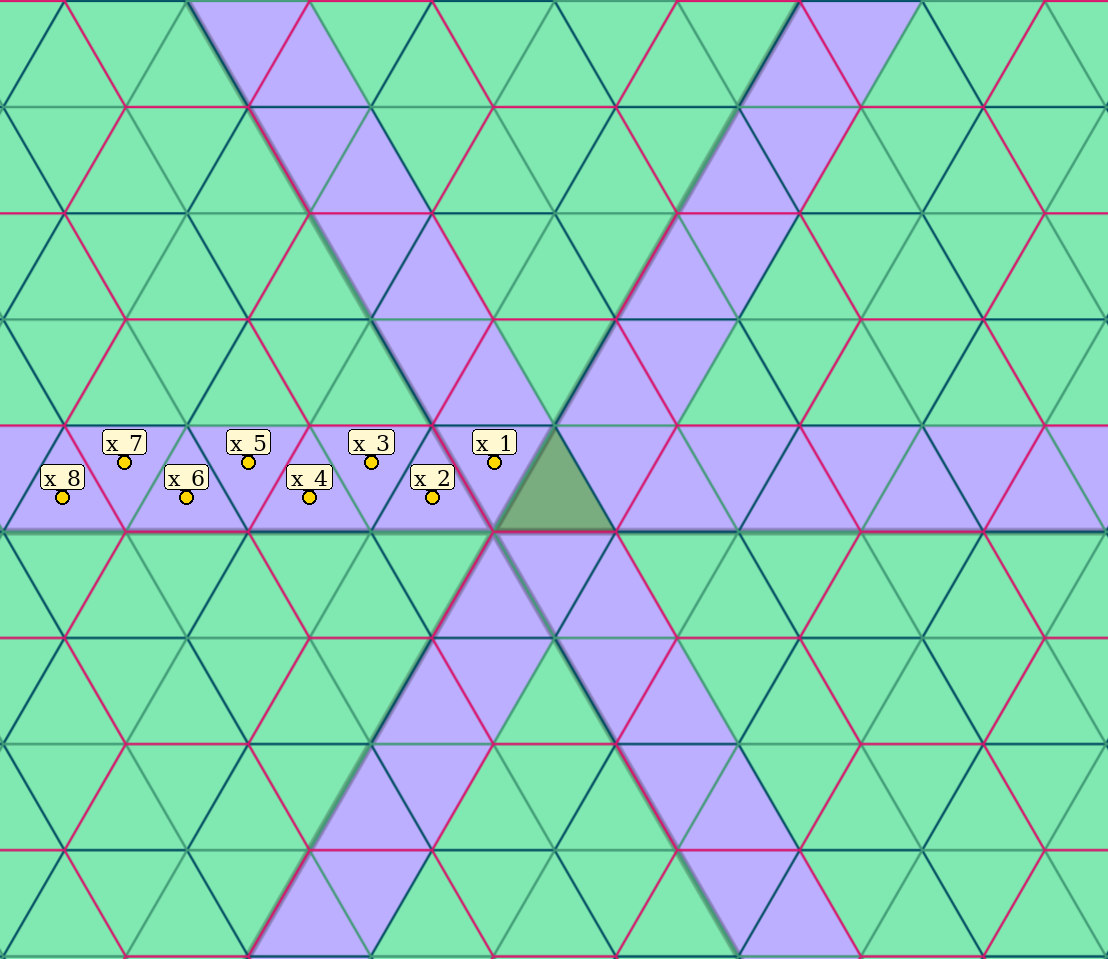}
\includegraphics[height=6cm]{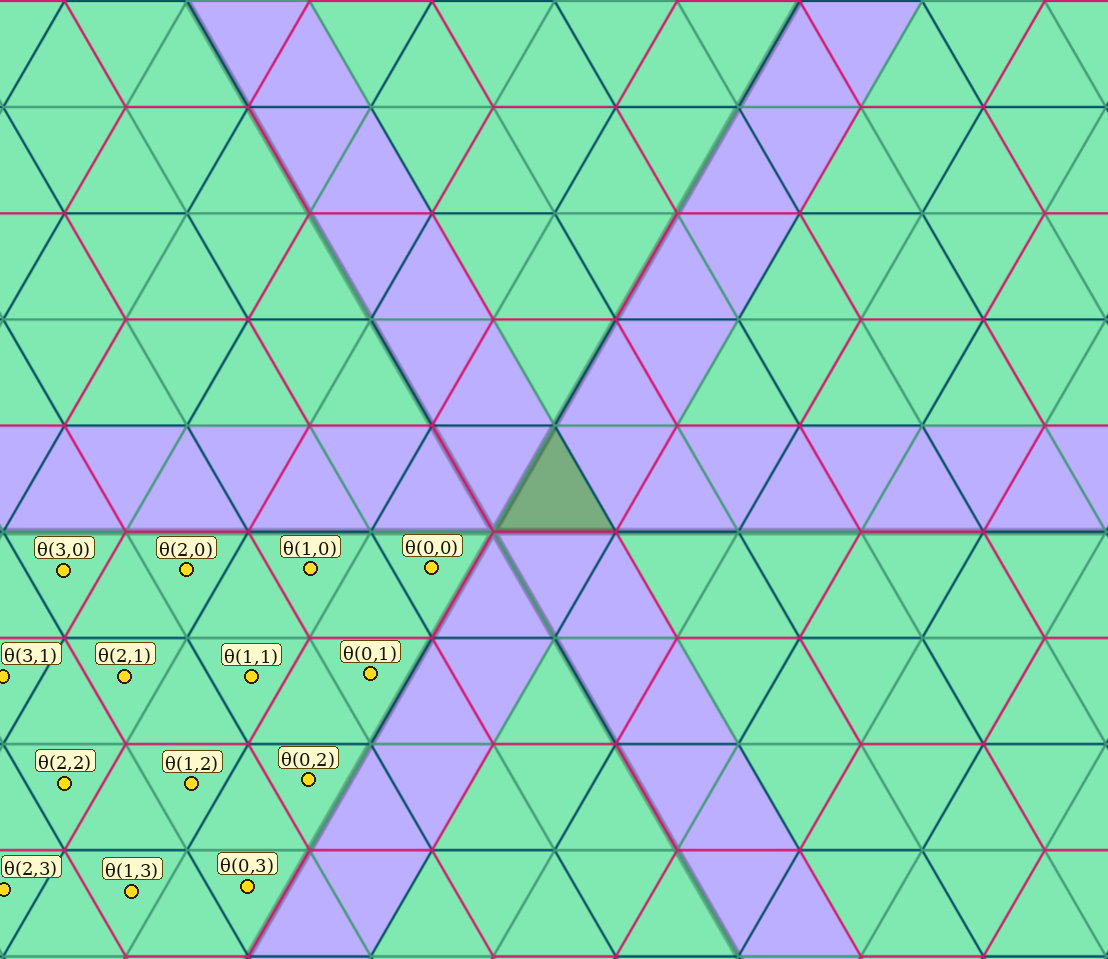}
\includegraphics[height=6cm]{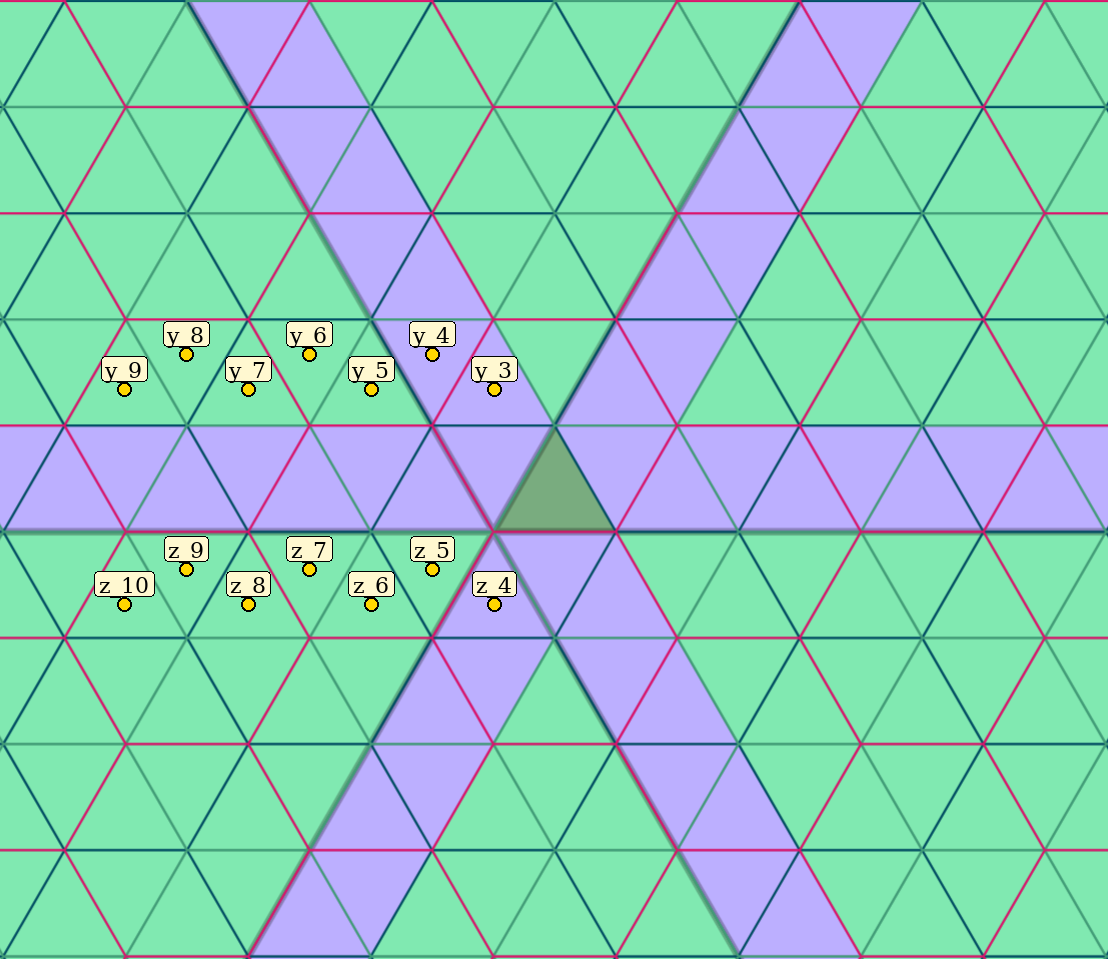}
\includegraphics[height=6cm]{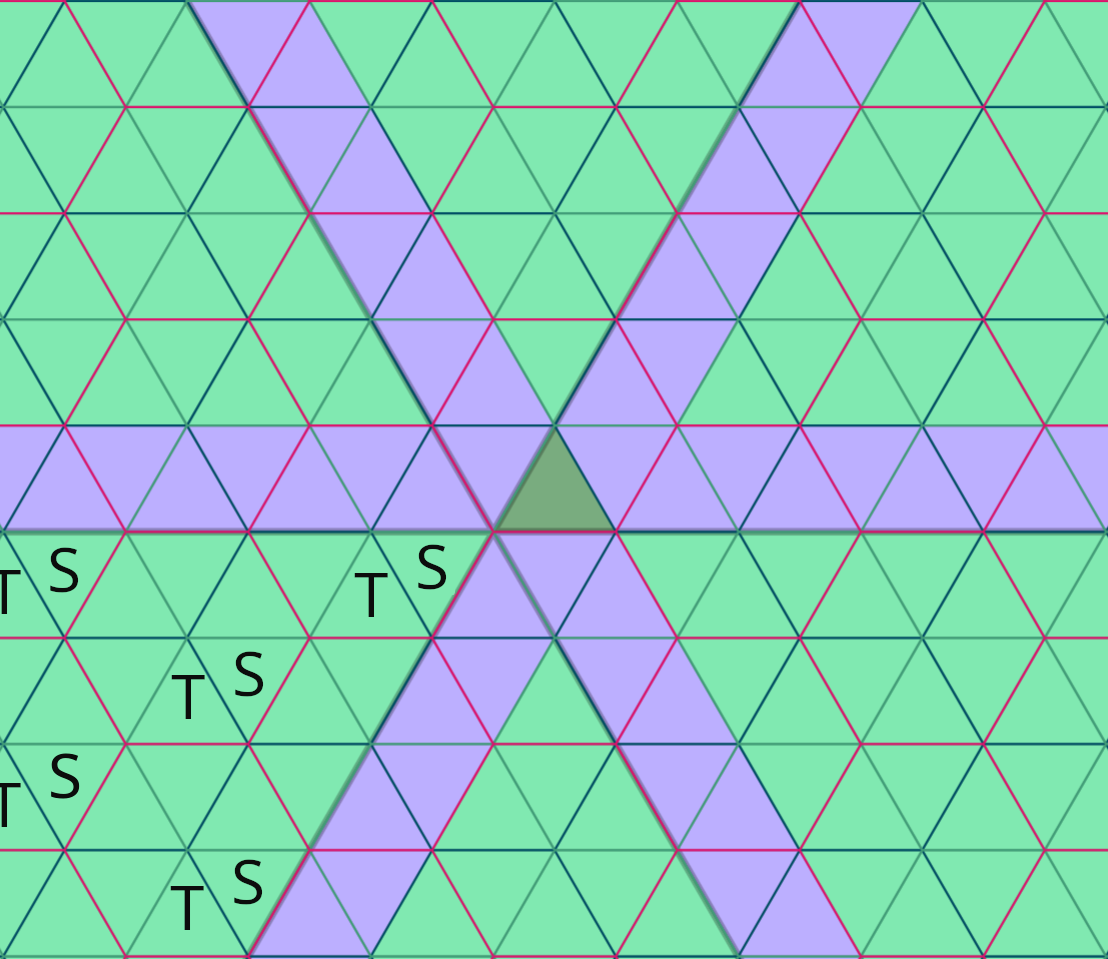}
\caption{The cyclic words (northwest) and the projective words (northeast). At the southeast, for one cone, we mark the translations T of the fundamental alcove; these are one-sided longest. We also mark the two-sided longest elements S, which lie in $\Hecke_{sph}$ and hence can be mapped to representations under the Satake isomorphism. The inner hexagon consists of the elements corresponding to $W_f$; in particular, the longest element $w_0$ is included in the spherical picture, but corresponds to the trivial representation. The other cones are obtained by rotating the
picture and relabeling the simple reflections. Finally, the southwest shows the $y_k,z_k$ as in \autoref{sec:cyclic}. For the alcove conventions, see
\autoref{fig:a2-alcoves}.}
\label{fig:a2-words}
\end{figure}

\subsection{The Coxeter element as a test case}\label{sec:cox}

The following is based on computer calculations that are available at \cite{COT}.
We record the first few powers of the Coxeter element
\[
	a:=b_{123}=b_{x_3}.
\]
In the standard basis, we have
\[b_{x_3} = 1 + \delta_1 + \delta_2 + \delta_3 + \delta_{12} + \delta_{13} + \delta_{23} + \delta_{123},\]
and $\beta = \nu^\delta(b_{x_3}) = 8$. This is the first element $b_w$ along the wall where $w \notin W_f$.
\\
\\

At \(v=1\), the KL expansions for the first five powers are:
\begin{align*}
a
&=
b_{x_3},
\\[0.4em]
a^2
&=
b_{\theta(0,0)3}
+
b_{y_5}
+
b_{x_6},
\\[0.4em]
a^3
&=
b_{y_4[132]}
+
2b_{\theta(0,0)3}
+
2b_{y_5}
+
2b_{y_6}
+
2b_{\theta(0,1)}
+
2b_{\theta(0,1)[132]}
\\
&\quad
+
3b_{y_8}
+
2b_{2\theta(1,1)3}
+
b_{x_9},
\\[0.4em]
a^4
&=
2b_{y_4[132]}
+
6b_{\theta(0,0)3}
+
6b_{y_5}
+
6b_{y_6}
+
10b_{\theta(0,1)}
+
6b_{\theta(0,1)[132]}
\\
&\quad
+
2b_{2\theta(0,2)}
+
2b_{\theta(1,0)2[132]}
+
12b_{y_8}
+
6b_{2\theta(1,1)3}
+
5b_{\theta(1,1)3}
\\
&\quad
+
3b_{y_9}
+
5b_{2\theta(1,2)}
+
3b_{\theta(0,2)2[132]}
+
4b_{\theta(3,0)3}
+
4b_{y_{11}}
+
b_{x_{12}},
\\[0.4em]
a^5
&=
6b_{y_4[132]}
+
22b_{\theta(0,0)3}
+
22b_{y_5}
+
23b_{y_6}
+
48b_{\theta(0,1)}
+
23b_{\theta(0,1)[132]}
\\
&\quad
+
10b_{2\theta(0,2)}
+
10b_{\theta(1,0)2[132]}
+
56b_{y_8}
+
5b_{\theta(0,2)}
+
23b_{2\theta(1,1)3}
\\
&\quad
+
5b_{\theta(1,1)[132]}
+
5b_{y_8[132]}
+
32b_{\theta(1,1)3}
+
12b_{y_9}
+
32b_{2\theta(1,2)}
\\
&\quad
+
12b_{\theta(0,2)2[132]}
+
9b_{\theta(2,1)}
+
14b_{\theta(1,2)}
+
5b_{2\theta(0,3)3}
\\
&\quad
+
9b_{\theta(1,2)[132]}
+
4b_{\theta(0,3)[132]}
+
20b_{\theta(3,0)3}
+
20b_{y_{11}}
\\
&\quad
+
14b_{\theta(3,1)}
+
5b_{y_{12}}
+
9b_{2\theta(2,2)3}
+
5b_{\theta(0,4)[132]}
\\
&\quad
+
6b_{y_{14}}
+
5b_{2\theta(4,1)3}
+
b_{x_{15}}.
\end{align*}
Here we use $[abc]$ to indicate a relabeling of $1,2,3$ to $a,b,c$. The first few numbers $\nu_n$ are
\begin{gather*}
1, 1, 3, 17, 83, 472, 2813, 17665, 114707, 764440, 5197399, 35908465, 251369939, 1779066001,\\
12708857253, 91513539938, 663545968269, 4840510879326, 35500864393015, 261612229520427, \\ 1936121288343995, 14383966729216408, 107235512264419301, 802003121666073689, 
\\ 6015506924947283547, 45240058661666976530, 341065071104834640043, 2577112539257555579253,
\\ 19513690888446080682345, 148044023180636345992540, 1125200468081183529631187, \\
8566519169400932958346667, 65323168570084702584530155.
\end{gather*}
The (normalized) plot is given in \autoref{fig:a2-plots}.

\begin{figure}[ht]
\centering
\includegraphics[height=6cm]{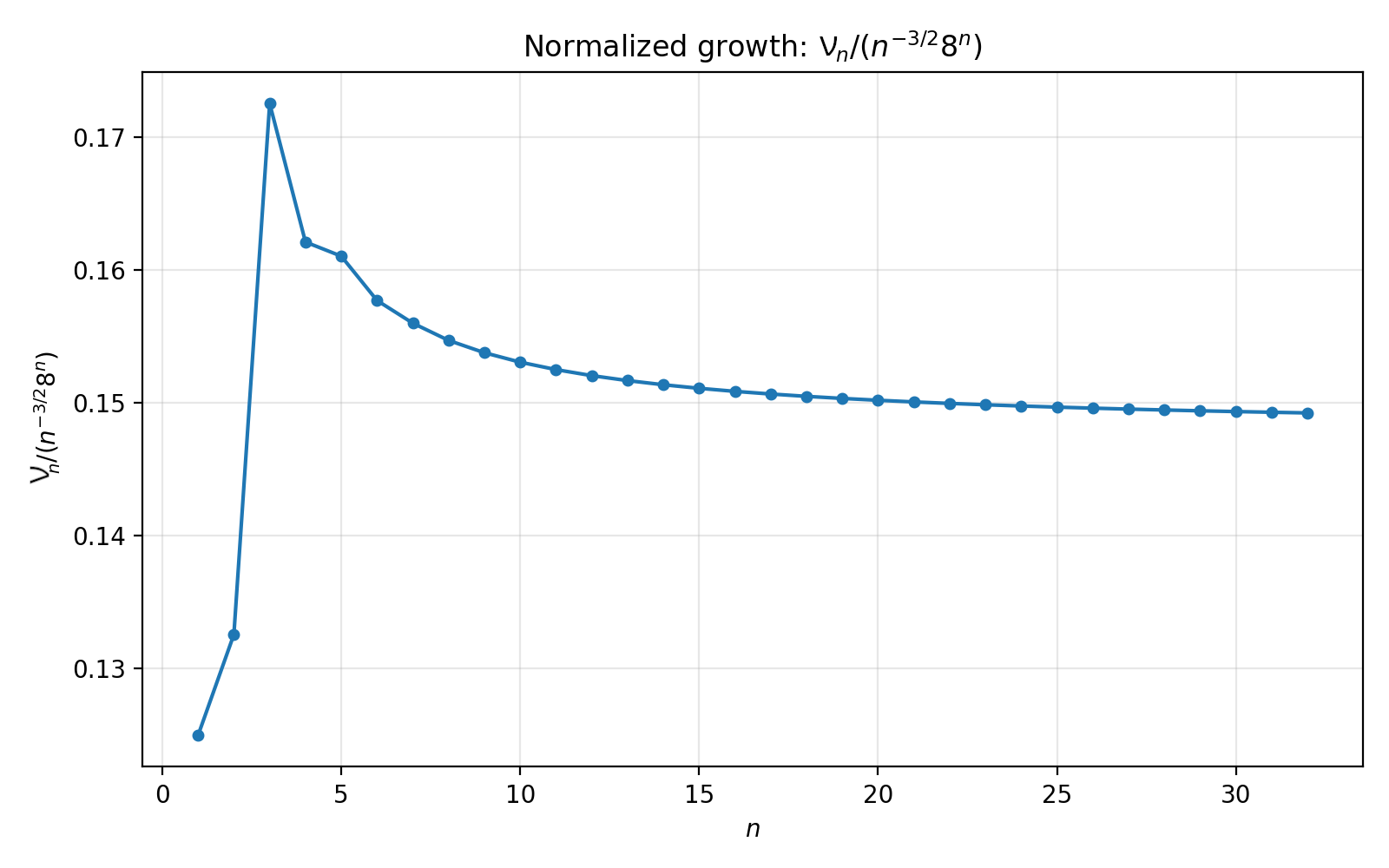}
\caption{The (normalized) sequence $\nu_n$ for $w=123$.}
\label{fig:a2-plots}
\end{figure}

We note two observations:
\begin{enumerate}

\item The powers of $b_{123}$ contain only one summand on the wall. (We will prove this in \autoref{sec:cyclic} in general.)

\item The data suggest that $\nu_n\sim C\cdot n^{-3/2}\cdot 8^n$ (thus, as expected, $\beta=8$). In the computed range, the normalized values $\nu_n/(n^{-3/2}\cdot 8^n)$
lie roughly between \(0.14\) and \(0.15\), so we estimate \(C\) to lie in this range.

\end{enumerate}
It is also worth recording the growth of the standard dimensions along the
wall ray. Specializing the formulas of
\cite[Theorem~1(i), Lemma~1.4]{LiPa} at \(v=1\), one obtains
for \(k\geq 2\),
\[
\nu^\delta(b_{x_{2k+1}})=6k^2+2,
\]
while, for \(k\geq 3\),
\[
\nu^\delta(b_{x_{2k}})=6k^2-6k+4.
\]
Thus the KL polynomials themselves grow quadratically along the wall ray
through the Coxeter element, in contrast with the generic growth in the
projective cell, which is cubic by
\autoref{thm:poly-projective-ray}.

\begin{Remark}
We stress the following distinction between the two asymptotic growth problems. Away from the projective cell, the growth
of KL polynomials along rays can behave quite differently; see also
\autoref{S:C2}. Nevertheless, the expected tensor power exponent
\(\tau=-|\Phi^+|/2\) still appears to persist.
\end{Remark}

\subsection{Growth of polynomials}\label{sec:pola2}
By \autoref{thm:poly-projective-ray}, all regular rays starting at the fundamental alcove have cubic growth. We now treat the cyclic wall family and the two neighboring projective families running in the same wall direction. Up to cyclic relabeling and reversal, these are the families
$x_j,y_j,z_j$
from \autoref{sec:cyclic}.

\begin{Theorem}\label{thm:a2-wall-polynomial-growth}
For the wall rays in affine type \(\widetilde A_2\), one has
\[
\nu^\delta(b_{x_j}), \nu^\delta(b_{y_j}), \nu^\delta(b_{z_j})\in\Theta(j^2).
\]
Regular rays are covered by
\autoref{thm:poly-projective-ray} and have cubic growth.
\end{Theorem}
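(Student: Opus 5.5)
For $x_j$ the plan is to use the closed formulas already recorded in \autoref{sec:cox}. Specialising \cite[Theorem~1(i), Lemma~1.4]{LiPa} at $v=1$ gives $\nu^\delta(b_{x_{2k+1}})=6k^2+2$ and $\nu^\delta(b_{x_{2k}})=6k^2-6k+4$. Both are quadratic in $k\approx j/2$, so $\nu^\delta(b_{x_j})\in\Theta(j^2)$. The finitely many small $j$ excluded there are harmless.

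For $y_j$ and $z_j$ I would squeeze them between cyclic elements using two facts: $\nu^\delta$ is multiplicative (\autoref{nu_del ringhom}), and products of KL basis elements are KL-positive with nonnegative standard expansions.

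\emph{Upper bound.} The alcoves $y_j,z_j$ sit adjacent to the wall, so up to relabeling each is obtained from a cyclic word by a bounded number of extra simple reflections. Concretely, I expect $y_j=x_{j'}u$ (or $u\,x_{j'}$) with $\ell(u)\le c$ for a uniform constant $c$, $|j-j'|\le c$, and lengths adding; I will read this off from \autoref{fig:a2-words} and the $\theta(m,n)$ notation of \cite{LiPa}. Then $b_{y_j}$ is a KL summand of $b_{x_{j'}}b_{s_1}\cdots b_{s_c}$. By the argument in the proof of \autoref{lem:finite factor bound} this gives $\nu^\delta(b_{y_j})\le 2^c\nu^\delta(b_{x_{j'}})=O(j^2)$, and the same for $z_j$.

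\emph{Lower bound.} Conversely, $x_{j''}$ with $j''\ge j-c$ is a prefix of $y_j$, reached by deleting a bounded tail from a reduced expression. I would then use monotonicity of KL polynomials at $v=1$ under Bruhat order in the lower interval, $x\le x_{j''}\le y_j\Rightarrow a_{x,y_j}\ge$ something positive. Since that route is delicate, the cleaner version is this: the Bruhat interval $[e,x_{j''}]$ is contained in $[e,y_j]$, and every KL coefficient at $v=1$ is at least $1$ on the Bruhat interval. Hence
\[
\nu^\delta(b_{y_j})\ge \#[e,y_j]\ge \#[e,x_{j''}].
\]
So it suffices to show $\#[e,x_{j}]\in\Omega(j^2)$. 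I will get this by counting elements below $x_j$: subwords of the cyclic word produce the alcoves in a strip of width proportional to $j$ around the wall, giving quadratically many elements. Alternatively, the counts can be extracted from \cite{LiPa}, since the elements there with KL polynomial supported on quadratically many terms have all coefficients $1$.

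\emph{Main obstacle.} The hardest step is justifying the uniform bounded-distance factorisations of $y_j,z_j$ relative to $x_j$, together with the quadratic lower bound on the Bruhat interval. If the counting becomes messy, the fallback is to specialise the explicit formulas of \cite{LiPa} for $\theta(m,n)$ at $v=1$ along these two families. This should produce explicit quadratics, as for $x_j$. The final sentence of the statement is simply \autoref{thm:poly-projective-ray}.
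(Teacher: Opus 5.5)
Your proof is correct, but it is organised differently from the paper's. The paper applies \cite[Proposition~1.6]{LiPa} to each of $b_{x_j}$, $b_{y_j}$, $b_{z_j}$ separately. It writes each as a uniformly bounded positive sum of Bruhat-interval sums $N_w$ with indices near $j$. It then relies on the assertion, not proved there, that the intervals $[e,x_j]$, $[e,y_j]$, $[e,z_j]$ have size $\Theta(j^2)$.

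You use explicit Kazhdan--Lusztig information only for the wall family, through the closed formulas for $\nu^\delta(b_{x_j})$. You then transfer to the neighbouring families by general principles:
\begin{enumerate}
\item KL-positivity and multiplicativity of $\nu^\delta$ give $\nu^\delta(b_{y_j})\leq 2\nu^\delta(b_{x_{j-2}})$ from the length-additive factorisation $y_j=x_{j-2}\cdot j$. This factorisation is literally the paper's definition, so nothing has to be read off a picture.
\item The facts $a_{u,w}=P_{u,w}(1)\geq 1$ for $u\leq w$ and $[e,x_{j-2}]\subseteq[e,y_j]$ give the lower bound.
\end{enumerate}
This avoids the KL expansions of $b_{y_j}$ and $b_{z_j}$ entirely. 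It also applies verbatim to any family lying a bounded number of length-additive right multiplications beyond a family of known growth.

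The price is that you need $\#[e,x_j]\in\Omega(j^2)$ as a separate input, and you only sketch it. Here is a clean way to supply it. Suppose $2m+4n+4\leq j$. Then $\theta(m,n)$ is a subword of $x_j$: its first $2m+2$ letters sit in positions $1,\dots,2m+2$, and for $k=0,\dots,2n$ its $k$-th descending letter $2m+1-k$ sits in position $2m+4+2k$, which carries the same label modulo $3$. The word $\theta(m,n)$ has a single braid triplet, so it is reduced by \autoref{lem:lipa-criterion}. These elements are pairwise distinct in the parametrisation of \cite{LiPa}. Hence $\#[e,x_j]\geq c\,j^2$ for some $c>0$.

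One caveat about notation. The formula $z_j=x_{j-3}\,j$ as printed ends in a repeated letter, since $x_{j-3}$ already ends in the label $j \bmod 3$. So you have to work with the intended element from \autoref{fig:a2-words}. Your argument only needs that this element is a cyclic word followed by a bounded reduced tail, which is the case for the elements $\theta(m,n)$ with bounded $n$ adjacent to the wall.
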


\begin{proof}
We use the explicit formulas of 
\cite[Lemma~1.4 and Proposition~1.6]{LiPa}. Let \(N_w\) denote the positive
standard sum over the Bruhat interval below \(w\). Then
\[
\nu^\delta(N_w)=|\{u\in W\mid u\leq w\}|.
\]
For \(w=x_j,y_j,z_j\), these Bruhat intervals have quadratic size:
\[
\nu^\delta(N_w)\in\Theta(j^2).
\]
By \cite[Proposition~1.6]{LiPa}, each of
$b_{x_j},b_{y_j},b_{z_j}$
is a uniformly bounded finite positive sum of such \(N\)-terms, with all indices differing from
\(j\) by a bounded amount, and with the leading term \(N_{x_j}\), \(N_{y_j}\),
respectively \(N_{z_j}\), occurring. Hence each
\(\nu^\delta(b_{x_j})\), \(\nu^\delta(b_{y_j})\), and
\(\nu^\delta(b_{z_j})\) is a uniformly bounded finite positive sum of quantities in
\(\Theta(j^2)\). Therefore
\[
\nu^\delta(b_{x_j}), \nu^\delta(b_{y_j}), \nu^\delta(b_{z_j}) \in\Theta(j^2).
\]
The proof is complete.
\end{proof}

\subsection{Growth of tensor powers}

We now study the sequence $\nu_n$.

\subsubsection{The number of cyclic summands is negligible.}\label{sec:cyclic}
Recall from \autoref{sec:a2results}, for $j \geq 4$, the definitions of some projective cell elements;
\[
y_j:=x_{j-2}j,
\quad
z_j:=x_{j-3}j,
\]
where in each case the final letter is the indicated simple reflection written in $\{1,2,3\}$ modulo $3$.

\begin{Lemma}\label{lem:cyclic-mult}
The following multiplication rules hold in $\Hecke({\widetilde A_2})$;
\[
b_{x_j}b_{j+1}=
\begin{cases}
b_{x_{j+1}}+b_{y_j},&\text{if }j\text{ is odd},\\[0.4em]
b_{x_{j+1}}+b_{y_j}+b_{z_j},&\text{if }j\text{ is even}.
\end{cases}
\]
\end{Lemma}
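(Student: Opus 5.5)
We will prove the multiplication rule by the standard Kazhdan--Lusztig multiplication formula for a simple reflection on the right, specialised at $v=1$. Note that $x_j(j+1)=x_{j+1}$ is reduced, since the cyclic word continues with the letter $j+1$ (mod $3$). So $x_j s>x_j$ for $s=j+1$. In this situation the generic formula reads
\[
b_{x_j}b_s=b_{x_js}+\sum_{u<x_j,\ us<u}\mu(u,x_j)\,b_u ,
\]
where $\mu(u,x_j)$ is the coefficient of $v$ in the KL polynomial (normalised as in the paper's conventions). The formula already holds in $\Hecke_v$, and we then specialise at $v=1$. The first term gives $b_{x_{j+1}}$. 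The task is therefore to determine the set of $u<x_j$ with $u s<u$ and $\mu(u,x_j)\neq 0$, and to show that these $\mu$-values equal $1$.

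For this we use the explicit description of the KL basis elements $b_{x_j}$ from Libedinsky--Patimo \cite{LiPa}, namely \cite[Theorem~1, Lemma~1.4, Proposition~1.6]{LiPa}. These express $b_{x_j}$ as a short positive combination of the Bruhat-interval sums $N_w$, with explicit polynomial coefficients. From these formulas we read off the $\mu$-function: $\mu(u,x_j)\neq0$ only for $u$ of length $\ell(x_j)-1$ (codimension one, where $\mu=1$ exactly when $u<x_j$), or for $u$ at small bounded distance coming from the correction terms in \cite{LiPa}.

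Next we check which of these $u$ satisfy $us<u$ with $s=j+1$.

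\emph{Codimension-one elements.} These are obtained by deleting one letter of the cyclic word $x_j$. Deleting the letter in position $j-2$ gives $x_{j-3}\,(j-1)\,j$. In the geometric picture this alcove lies adjacent to the wall, and after a braid move it ends in $j+1$ exactly when it is the element whose multiplication by $s$ drops. Comparing with the definitions $y_j=x_{j-2}j$ and $z_j=x_{j-3}j$, we expect the unique codimension-one survivor to be $y_j$. We verify this by writing the deleted-letter words in alcove form (\autoref{fig:a2-words}) and checking the descent set, which is a finite local computation independent of $j$ up to rotation of labels.

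\emph{Non-codimension-one terms, and the parity split.} For $j$ even, the \cite{LiPa} formula for $b_{x_{2k}}$ carries an extra correction term (reflected in the different formula $6k^2-6k+4$ versus $6k^2+2$). This correction should contribute a $\mu$-value $1$ at the element $z_j$, which has length $\ell(x_j)-2$ and descent $s$. For $j$ odd, no such term exists. This accounts for the case distinction in the statement.

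The main obstacle is extracting $\mu(u,x_j)$ reliably from \cite{LiPa}, in particular confirming that $\mu(z_j,x_j)=1$ exactly when $j$ is even, and that no other $u$ with descent $s$ has nonzero $\mu$. To handle this, we would first verify small cases $j=4,5$ directly against the computed expansion of $a^2=b_{123}^2$ and the data in \cite{COT}. We would then argue uniformly in $j$ using the $3$-fold rotational symmetry, so that only two parity cases remain. As a consistency check, we compare $\nu^\delta$ on both sides: the left-hand side equals $2\nu^\delta(b_{x_j})$, and the right-hand side is computed from the \cite{LiPa} formulas for $x_{j+1},y_j,z_j$.
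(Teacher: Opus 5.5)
\emph{Gap: the parity split.} The paper's proof is only the citation ``Found in \cite{LiPa}''. Your route is a legitimate, more explicit way to derive the lemma. Since $x_j(j+1)=x_{j+1}>x_j$, one has $b_{x_j}b_s=b_{x_{j+1}}+\sum_{u<x_j,\,us<u}\mu(u,x_j)b_u$, so everything reduces to finding the $u$ with $us<u$ and $\mu(u,x_j)\neq 0$. However, your account of the parity split, which is the step carrying the content of the lemma, cannot work. The polynomial $h_{u,x}$ only contains powers $v^k$ with $k\equiv \ell(x)-\ell(u)\pmod 2$. Hence $\mu(u,x_j)=0$ whenever $\ell(x_j)-\ell(u)$ is even, and an element of length $\ell(x_j)-2$ can never occur as a summand of $b_{x_j}b_s$.

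Your length $\ell(x_j)-2$ comes from counting letters in $x_{j-3}j$. That word is not reduced, because $x_{j-3}$ already ends in the letter $j-3\equiv j$. So $z_j$ cannot be read off that formula, and the extra summand must have odd codimension. It does: it sits in codimension $3$. For example, using $b_{x_5}=b_{x_4}b_2-b_{121}$ and $b_{x_6}=b_{x_5}b_3-b_{1232}$, one finds
\[
h_{131,x_6}=v+v^3,\qquad h_{121,x_6}=h_{231,x_6}=v^3 .
\]
Therefore $b_{x_6}b_1=b_{x_7}+b_{12313}+b_{131}$, where $12313=y_6$ and $131=313$ has length $3=\ell(x_6)-3$.

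\emph{Further problems.} Your codimension-one analysis deletes the wrong letter. For $j\geq 5$, removing position $j-2$ gives $x_{j-3}(j-1)j$, which has two adjacent braid triplets and is not reduced (e.g.\ $12323$ for $j=6$). The element $y_j=x_{j-2}j$ comes from deleting position $j-1$. Your conclusion that $y_j$ is the unique codimension-one survivor is nevertheless correct.

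More seriously, the small case you propose to check contradicts the even case as stated. For $j=4$ the only candidates with descent $2$ are $121$ (codimension one) and $2$, and $h_{2,x_4}=v^3$. Hence $b_{x_4}b_2=b_{x_5}+b_{121}$, which is consistent with $a^2=b_{x_4}b_2b_3$ having exactly three summands. So the three-term formula needs $j\geq 6$.

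Finally, the rotational symmetry does not relate different values of $j$, so it cannot reduce the problem to finitely many checks. A uniform proof must actually extract $h_{u,x_j}$ for all $u$ of codimension $1$ and $3$ with descent $s$ from the closed formulas in \cite{LiPa}, or run an induction on $j$ alongside those formulas. As written, this decisive step is only asserted (``should contribute'', ``we expect'').
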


\begin{proof}
Found in \cite{LiPa}.
\end{proof}

Let $\pi_{cyc}$ denote the projection onto the span of cyclic KL basis elements, and accordingly, let $\nu(\pi_{cyc}(b_w^n))$
denote the sum of coefficients of the cyclic summands occurring in $b_w^n$.

\begin{Lemma}\label{lem:cyc0}
If $j=3k$, then $b_{x_j}^n$ contains exactly one cyclic summand, namely $b_{x_{jn}}$, meaning
\[
\nu(\pi_{cyc}(b_{x_j}^n))=1.
\]
\end{Lemma}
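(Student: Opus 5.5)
The plan is to prove by induction on $n$ the sharper statement
\[
b_{x_j}^n \;=\; b_{x_{jn}} + P_n,
\]
where $P_n$ is a KL-positive combination of projective-cell elements. The case $n=1$ is trivial. For the inductive step we write $b_{x_j}^{n+1}=b_{x_j}^n\,b_{x_j}=b_{x_{jn}}b_{x_j}+P_nb_{x_j}$ and treat the two terms separately.

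\textbf{The term $P_nb_{x_j}$.} The span of the projective cell is a two-sided ideal, as used in the proof of \autoref{thm:general-lower-upper}. Hence $P_nb_{x_j}$ is again a KL-positive combination of projective elements, and it contributes no cyclic summands.

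\textbf{The term $b_{x_{jn}}b_{x_j}$.} Here we use that $j=3k$, so the word $x_j=(123)^k$ ends in $3$ and $x_{jn}$ also ends in $3$. The reduced word for $x_j$ is therefore exactly the continuation of the cyclic word $x_{jn}$, i.e.\ $x_{jn}x_j=x_{j(n+1)}$. We bound $b_{x_j}$ from above inside a Bott--Samelson product:
\[
b_1b_2b_3\cdots (j\text{ factors}) \;=\; b_{x_j}+(\text{KL-positive}).
\]
Applying \autoref{lem:cyclic-mult} repeatedly gives
\[
b_{x_{jn}}\,b_1b_2\cdots b_j \;=\; b_{x_{jn+j}} + (\text{terms}).
\]
The remaining terms are products of $b_{y_m}$ or $b_{z_m}$ with further $b_s$'s, and these stay projective by the ideal property. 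So this product has exactly one cyclic summand, $b_{x_{j(n+1)}}$, with coefficient $1$.

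By positivity of the structure constants $p_{x,y}^w$, $b_{x_{jn}}b_{x_j}$ is a KL-positive summand of $b_{x_{jn}}b_1\cdots b_j$. Its cyclic part is therefore at most $b_{x_{j(n+1)}}$. It is also nonzero: a cyclic summand must be present, since $\delta_{x_{j(n+1)}}=\delta_{x_{jn}}\delta_{x_j}$ appears with coefficient $1$ in the standard expansion, and only $b_{x_{j(n+1)}}$ among the summands of the Bott--Samelson product has that top term. This completes the induction.

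\textbf{Main obstacle.} The delicate point is the claim that the only cyclic summand in $b_{x_{jn}}b_1\cdots b_j$ is the top one. Multiplying $b_{y_m}$ or $b_{z_m}$ by $b_s$ cannot return to the wall, because of the ideal property. However, one must also rule out lower-length cyclic summands coming from $b_{x_m}b_s$ with $ms<m$. This does not occur: each successive letter in the chain continues the cyclic word, so \autoref{lem:cyclic-mult} applies at every step, with indices modulo $3$ matching precisely because $3\mid j$. If $3\nmid j$, the juxtaposition $x_{jn}x_j$ would fail to be cyclic, and \autoref{lem:cyclic-mult} would not apply at the junction. This is exactly why the hypothesis $j=3k$ is needed.
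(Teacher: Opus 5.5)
Your proof is correct and uses the same ingredients as the paper's: the bound of $b_{x_j}$ by the Bott--Samelson product $b_1b_2\cdots b_j$, iterated use of \autoref{lem:cyclic-mult} (which works precisely because $3\mid j$), the ideal property of the projective cell, and positivity. The only difference is organizational. The paper bounds all of $b_{x_j}^n$ at once inside $b_{x_j}\prod_{i=j+1}^{jn}b_i$ and inducts on the number of simple factors, whereas you induct on $n$ one factor of $b_{x_j}$ at a time. Your top-length argument also makes explicit the existence of the summand $b_{x_{jn}}$, which the paper simply attributes to ``KL theory''.
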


\begin{proof}
We use induction on the number of $b_i$ we multiply by. Firstly, note that the product $b_{x_j}^n$ occurs inside the longer cyclic product
\[
b_{x_j}\prod_{i=3k+1}^{3kn} b_i,
\]
where the indices are read cyclically modulo $3$. 

Without loss of generality, suppose $j=3k$ is odd. By \autoref{lem:cyclic-mult}, $b_{x_j}b_{j+1} = b_{x_{j+1}} + b_{y_j}$, so only one cyclic summand has appeared, $b_{x_{j+1}}$.

Now, consider the multiplication $b_{x_j} \prod_{i=3k+1}^{m} b_i = \sum_w a_w b_w$. We know by standard KL theory that this cyclic summand is $b_{x_m}$. Suppose that this is the only cyclic summand, and every other $b_w$ in this sum is projective. Then, multiplication by $b_{m+1}$ gives us $b_{x_{m+1}}$ from the cyclic summand, plus a sum of projective elements (since the projective cell is a two-sided ideal). Thus, 
\[b_{x_j} \prod_{i=3k+1}^{m+1} b_i\]
also only has one cyclic summand. So, by induction, 
\[b_{x_j}\prod_{i=3k+1}^{3kn}b_i\]
contains exactly one cyclic summand, which is equal to $b_{x_{3kn}} = b_{x_{nj}}$.

Since $b_{x_j}^n$ appears as a summand of this larger product, it also contains at most one cyclic summand. Since we know it contains $b_{x_{nj}}$ by KL theory,it contains exactly this one cyclic summand. 
\end{proof}

\begin{Lemma}\label{lem:cyc1}
If $j=3k$, then $b_{x_{j+1}}^n$ contains exactly $2^{n-1}$ cyclic summands, all equal to $b_{x_{jn+1}}$. In particular,
\[
\nu(\pi_{cyc}(b_{x_{j+1}}^n))=2^{n-1}.
\]
\end{Lemma}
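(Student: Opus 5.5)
We argue by induction on $n$, in the same spirit as \autoref{lem:cyc0}, tracking the cyclic part of $b_{x_{j+1}}^n$ while using that the projective cell spans a two-sided ideal, so projective summands never produce cyclic ones. The base case $n=1$ is trivial: $b_{x_{j+1}}$ is itself cyclic, with multiplicity $1=2^0$. For the inductive step we assume
\[
b_{x_{j+1}}^{n-1}=2^{n-2}b_{x_{j(n-1)+1}}+(\text{projective}),
\]
and multiply on the right by $b_{x_{j+1}}$. The projective part stays projective. Hence it suffices to prove
\begin{gather}\label{eq:key-cyc1}
\pi_{cyc}\bigl(b_{x_{m}}b_{x_{j+1}}\bigr)=2\,b_{x_{m+j}},\qquad m=j(n-1)+1\equiv 1 \pmod 3 .
\end{gather}
Since $j=3k$, both $x_m$ and $x_{j+1}$ start with $1$. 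Moreover $x_m$ ends with $1$, because $m\equiv 1$ modulo $3$.

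To compute \eqref{eq:key-cyc1}, we write $b_{x_{j+1}}$ in terms of products of $b_i$. By \autoref{lem:cyclic-mult}, applied repeatedly starting from $b_1$, we have
\[
b_1b_2b_3\cdots b_{j+1}=b_{x_{j+1}}+(\text{projective}).
\]
Since $b_{x_{j+1}}$ occurs there with coefficient $1$, and all summands are KL-positive, we can bound $b_{x_m}b_{x_{j+1}}$ from above by $b_{x_m}b_1b_2\cdots b_{j+1}$. The first factor is the crucial one. Because $x_m$ ends in $1$, we have $b_{x_m}b_1=2b_{x_m}$. After that, multiplying successively by $b_2,b_3,\dots,b_{j+1}$ follows the cyclic order. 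By \autoref{lem:cyclic-mult}, each such step sends the unique cyclic summand $b_{x_t}$ to $b_{x_{t+1}}$ plus projective terms. Consequently the cyclic part of $b_{x_m}b_1\cdots b_{j+1}$ is exactly $2b_{x_{m+j}}$, which gives the upper bound ``$\le 2$'' in \eqref{eq:key-cyc1}.

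\textbf{Main obstacle.} The upper bound holds by comparison with the larger product, and the difficult part is the matching lower bound: we must show that $b_{x_m}b_{x_{j+1}}$ really contains $b_{x_{m+j}}$ with multiplicity $2$, not just $1$. My first attempt uses $b_{x_{j+1}}=b_1\cdot b_{x'}$ up to projective corrections, where $x'=23\cdots(j+1)$ is the cyclic word of length $j$ starting in $2$. Indeed, $b_1b_{x'}=b_{x_{j+1}}+(\text{projective})$ by the left-handed version of \autoref{lem:cyclic-mult}, since $1x'$ is a cyclic reduced word. This yields
\[
b_{x_m}b_{x_{j+1}}\equiv b_{x_m}b_1b_{x'}=2\,b_{x_m}b_{x'}
\]
modulo terms whose cyclic part we must control. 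The subtle point is that the correction term $b_{x_m}\cdot(\text{projective})$ is projective, because projective elements form an ideal. Therefore the two products agree modulo projectives, and the cyclic part is exactly $2\pi_{cyc}(b_{x_m}b_{x'})$.

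It remains to compute $\pi_{cyc}(b_{x_m}b_{x'})=b_{x_{m+j}}$. Here we argue as in \autoref{lem:cyc0}: since $x'$ begins with $2$, which continues the cyclic word $x_m$ ending in $1$, and since $b_{x'}$ equals $b_2b_3\cdots b_{j+1}$ modulo projectives, the iterated form of \autoref{lem:cyclic-mult} gives exactly one cyclic summand $b_{x_{m+j}}$. This establishes \eqref{eq:key-cyc1} and completes the induction, giving multiplicity $2\cdot 2^{n-2}=2^{n-1}$.

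The one point we still need to verify against \cite{LiPa} is that $b_{x'}$ is congruent to the product $b_2\cdots b_{j+1}$ modulo the projective ideal. Equivalently, every extra summand generated along the wall is projective, and no lower cyclic terms appear. The proof of \autoref{lem:cyc0} already uses this implicitly.
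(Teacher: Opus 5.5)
Your argument is correct. It uses the same mechanism as the paper: compute modulo the two-sided ideal spanned by the projective cell, where the only departure from \autoref{lem:cyc0} is one doubling at each of the $n-1$ junctions where a letter $1$ meets a letter $1$.

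The bookkeeping differs. The paper peels $b_1$ off on the right, $b_{x_{j+1}}\equiv b_{x_j}b_1$, and regroups the whole power at once:
\[
(b_{x_j}b_1)^n=b_{x_j}(b_1b_{x_j})^{n-1}b_1=2^{n-1}b_{x_j}^n b_1 .
\]
This uses that $1$ is a left descent of $x_j$. The paper then quotes \autoref{lem:cyc0} for $b_{x_j}^n$ and applies \autoref{lem:cyclic-mult} once more.

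You instead induct on $n$ and peel $b_1$ off on the left, $b_{x_{j+1}}\equiv b_1b_{x'}$. You then absorb it into the right descent $1$ of the accumulated word $x_m$. This requires the mirrored and rotated versions of \autoref{lem:cyclic-mult}, obtained via $w\mapsto w^{-1}$ and the diagram automorphisms of $\widetilde A_2$, which preserve the projective cell. In exchange it bypasses \autoref{lem:cyc0} and gives a reusable two-factor junction rule $\pi_{cyc}(b_{x_m}b_{x_{j+1}})=2b_{x_{m+j}}$. The paper's regrouping is shorter but tailored to powers.

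Two small points. First, your separate upper-bound paragraph is unnecessary. The congruences modulo the projective ideal already determine the cyclic part exactly, and no $b_e$ can appear because the span of $\{b_w\mid w\neq e\}$ is an ideal. Second, the item you flag at the end is not open. The congruence $b_2b_3\cdots b_{j+1}\equiv b_{x'}$ is the image, under the rotation $1\mapsto 2\mapsto 3\mapsto 1$, of $b_1b_2\cdots b_j\equiv b_{x_j}$, which you already derived from \autoref{lem:cyclic-mult}.
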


\begin{proof}
From \autoref{lem:cyclic-mult}, we see that $b_{x_{j+1}}^n = (b_{x_j}b_1 - b_{y_j})^n$. Then, the only cyclic element appearing in this expansion is $(b_{x_j}b_1)^n$, and everything else is projective. Then, we see;
\[
(b_{x_j}b_1)^n
= b_{x_j}(b_1b_{x_j})^{n-1}b_1
=2^{n-1}b_{x_j}^n b_1.
\]
Thus, $b_{x_{j+1}}^n$ contains one cyclic summand, $b_{x_{nj+1}}$, with multiplicity $2^{n-1}$.
\end{proof}

\begin{Lemma}\label{lem:cyc2}
If $j=3k$, then $b_{x_{j+2}}^n$ contains exactly one cyclic summand, namely $b_{x_{jn+2}}$. In particular,
\[
\nu(\pi_{cyc}(b_{x_{j+2}}^n))=1.
\]
\end{Lemma}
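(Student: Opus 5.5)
The strategy is to work modulo the projective cell rather than tracking individual summands. Let $P\subset\Hecke$ be the span of the KL basis elements indexed by the projective cell. It is a two-sided ideal, as already used in the proof of \autoref{lem:cyc0}. All KL expansions here have nonnegative coefficients, so the cyclic part $\pi_{cyc}(X)$ of a KL-positive $X$ can be read off from its image in $\Hecke/P$, provided that image is itself a combination of cyclic basis elements. The goal is therefore to show
\[
b_{x_{j+2}}^n\equiv b_{x_{jn+2}}\pmod P .
\]
This gives exactly one cyclic summand, with multiplicity $1$.

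\textbf{Step 1 (cyclic elements as products modulo $P$).} By \autoref{lem:cyclic-mult}, $b_{x_m}b_{m+1}$ equals $b_{x_{m+1}}$ plus the terms $b_{y_m}$ and $b_{z_m}$, which are projective. Inducting on $m$ and reading indices modulo $3$, this gives
\[
b_{x_m}\equiv b_1b_2\cdots b_m\pmod P .
\]
Since $j=3k$, we have $x_{j+2}=x_j12$. Hence
\[
b_{x_{j+2}}^n\equiv (b_1b_2b_3\cdots b_1b_2)^n\pmod P ,
\]
where each factor is the cyclic product of length $j+2$. It starts with $b_1$ and ends with $b_2$.

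\textbf{Step 2 (the junctions).} At each of the $n-1$ junctions between consecutive factors we meet the product $b_1b_2b_1b_2$. A direct computation at $v=1$, using $b_1b_2b_1=b_{121}+b_1$ and $b_{121}b_2=2b_{121}$, gives
\[
b_1b_2b_1b_2=2b_{121}+b_{12}.
\]
The key observation is that $b_{121}\in P$: the longest element of a finite standard parabolic subgroup lies in the lowest two-sided cell. In \autoref{fig:a2-words}, $121=w_0$ sits in the inner hexagon of the green cones. So modulo $P$ we get $b_1b_2b_1b_2\equiv b_1b_2$. Each junction therefore deletes one copy of $b_1b_2$, and the cyclic pattern $\cdots 3\,1\,2\,3\cdots$ continues unbroken across it.

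\textbf{Step 3 (conclusion).} After the $n-1$ collapses we are left with the single cyclic product $b_1b_2\cdots b_{m}$ of length
\[
m=n(j+2)-2(n-1)=jn+2 .
\]
By Step 1 this is congruent to $b_{x_{jn+2}}$ modulo $P$. Since $\Hecke\to\Hecke/P$ is a ring map, all of these manipulations are legitimate in the quotient. It follows that $b_{x_{j+2}}^n$ has exactly one cyclic summand, namely $b_{x_{jn+2}}$, with coefficient $1$.

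The step I expect to need the most care is justifying $b_{121}\in P$ and making sure no other non-projective summands appear. The trivial cell cannot contribute, because every summand of the power has length at least $jn+2$. The parabolic elements $1,2,12,21$ lie outside the projective cell, but they are absorbed into the cyclic products as shown above. If the inclusion $121\in P$ causes trouble, I would instead verify directly, via \autoref{lem:cyclic-mult} or \cite{LiPa}, that $b_{x_m}b_{121}\in P$ whenever $x_m$ ends in $3$, since this is all that is used at the junctions.
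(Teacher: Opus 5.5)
Your proof is correct and is essentially the paper's argument made explicit. The paper regroups $(b_{x_j}b_{x_2})^n=b_{x_j}(b_{x_2}b_{x_j})^{n-1}b_{x_2}$ and ``rewrites the middle factor'' modulo projective terms, which is exactly your junction identity $b_1b_2b_1b_2=2b_{121}+b_{12}\equiv b_{12}$, using that $121=\theta(0,0)$ is projective; passing to $\Hecke/P$ is a clean way to see that the multiplicity is exactly $1$.

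Two side remarks are wrong, though the proof does not rely on either. First, not every longest element of a finite parabolic subgroup is projective (e.g.\ $w_{\{1\}}=1$ lies on the wall); you only need this for $w_0=121$. Second, summands of the power can be much shorter than $jn+2$ (e.g.\ $b_{12312}^2$ contains $b_{121312}$, of length $6<8$). The vanishing of the trivial coefficient already follows from the congruence $b_{x_{j+2}}^n\equiv b_{x_{jn+2}}$ in $\Hecke/P$.
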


\begin{proof}
One argues exactly as above, now starting from
\[
(b_{x_j}b_{x_2})^n=b_{x_j}(b_{x_2}b_{x_j})^{n-1}b_{x_2}.
\]
After rewriting the middle factor, the same argument shows that at most one cyclic summand survives, and the cyclic term is $b_{x_{jn+2}}$. Hence this is the unique cyclic summand.
\end{proof}

\begin{Lemma}\label{cor:cyc-negl}
For cyclic elements in type $\widetilde A_2$, the cyclic part is asymptotically negligible:
\[
\frac{\nu(\pi_{cyc}(b_w^n))}{\nuop(b_w^n)}\longrightarrow 0.
\]
\end{Lemma}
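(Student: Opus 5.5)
By \autoref{lem:cyc0}, \autoref{lem:cyc1} and \autoref{lem:cyc2}, together with the cyclic relabeling and reversal symmetry, every cyclic element $w=x_j$ with $j\geq 3$ satisfies one of two bounds on the numerator. Either it has $\nu(\pi_{cyc}(b_w^n))=1$, or, when $j\equiv 1 \pmod 3$, it has $\nu(\pi_{cyc}(b_w^n))=2^{n-1}$. The cases $j\leq 2$ lie in a proper parabolic subgroup and are excluded by the standing assumption. So in every case the numerator is at most $2^{n-1}$, and the plan is to show that the denominator $\nu(b_w^n)$ grows exponentially faster than $2^n$.

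For the denominator I would aim for a lower bound of the form $\nu(b_w^n)\geq c\cdot n^{-N}\cdot\beta^n$ with $\beta=\nu^\delta(b_{x_j})\geq 8$. The plan is to apply \autoref{lem:lower-by-standard-sum} to $X=b_w^n$. This gives
\[
\nu(b_w^n)\geq \frac{\beta^n}{\max_{z}\nu^\delta(b_z)},
\]
where the maximum runs over the KL-support of $b_w^n$. By \autoref{lem:length-support}, every such $z$ satisfies $\ell(z)\leq n\ell(w)$. It then remains to bound $\nu^\delta(b_z)$ polynomially in $\ell(z)$, uniformly over all $z\in W$ and not just over the projective cell.

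For $z$ in the projective cell, \autoref{lem:singular rays} gives $\nu^\delta(b_z)\leq B\,\ell(z)^{3}$. The only other $z$ occurring are cyclic ones (the identity and finite elements have bounded $\nu^\delta$). For those I would use \autoref{thm:a2-wall-polynomial-growth} or the explicit formulas $\nu^\delta(b_{x_{2k+1}})=6k^2+2$ and $\nu^\delta(b_{x_{2k}})=6k^2-6k+4$, which give a quadratic bound. Alternatively, one can use the crude bound $\nu^\delta(b_z)\leq 2^{\ell(z)}$ for finitely many short exceptions. Combining, $\max_z\nu^\delta(b_z)\leq B'(n\ell(w))^3$, hence $\nu(b_w^n)\geq c_w n^{-3}\beta^n$.

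Then
\[
\frac{\nu(\pi_{cyc}(b_w^n))}{\nu(b_w^n)}\leq \frac{2^{n-1}n^3}{c_w\beta^n}\leq C\, n^3 (1/4)^n\to 0 .
\]
The only step needing care is ensuring $\beta\geq 8>2$ for all relevant $w$. This holds because $\nu^\delta$ is monotone along the wall: $b_{x_{j+1}}$ is a KL summand of $b_{x_j}b_{j+1}$, so $\nu^\delta(b_{x_{j+1}})\geq 1\cdot\nu^\delta(b_{x_j})$ plus the positive contribution of the $y_j$ term, and $\beta(x_3)=8$. Even $\beta>2$ alone suffices.

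I expect the main obstacle to be bookkeeping rather than depth. The recorded lemmas only cover $j=3k, 3k+1, 3k+2$ starting from a multiple of $3$. Under the standing assumption every generator appears, so $j\geq 3$, and these three residue classes exhaust all cyclic elements. The identifications under relabeling and reversal also need to be checked against those lemmas.
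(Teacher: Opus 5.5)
Your proposal is correct and is essentially the paper's argument. The numerator is $1$ or $2^{n-1}$ by \autoref{lem:cyc0}, \autoref{lem:cyc1} and \autoref{lem:cyc2}. The denominator is bounded below through \autoref{lem:lower-by-standard-sum} and \autoref{lem:length-support}, combined with the polynomial bounds of \autoref{lem:singular rays} and \autoref{thm:a2-wall-polynomial-growth}. You make the polynomial loss $n^{-3}$ explicit, whereas the paper just asserts $\nu(b_{x_j}^n)\ge (j-1)^n$ for $n\gg 0$.

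One step needs repair: your justification of $\beta\ge 8$ runs the wrong way. The relation $b_{x_j}b_{j+1}=b_{x_{j+1}}+b_{y_j}+\cdots$ only gives the upper bound $\nu^\delta(b_{x_{j+1}})\le 2\nu^\delta(b_{x_j})$. Use instead that every standard coefficient $a_{z,x_j}$ with $z\le x_j$ is at least $1$, since KL polynomials have constant term $1$. Because $x_3$ is a prefix of $x_j$, this gives $\nu^\delta(b_{x_j})\ge |\{z\mid z\le x_j\}|\ge |\{z\mid z\le x_3\}|=8$.
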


\begin{proof}
Combine \autoref{lem:cyc0}, \autoref{lem:cyc1}, and
\autoref{lem:cyc2} with the general lower bound from
\autoref{thm:general-lower-upper}; for wall elements, the same lower bound
follows by \autoref{lem:lower-by-standard-sum}, using
\autoref{thm:a2-wall-polynomial-growth} in place of the projective-cell
polynomial bound. For $j\geq 3$ one has
\[
\nuop(b_{x_j}^n)\geq (j-1)^n,\quad n\gg 0,
\]
whereas the cyclic contribution is either $1$ or $2^{n-1}$. Since $j>2$, the ratio tends to zero.
\end{proof}

\begin{Lemma}\label{prop:a2reduction}
Let $w$ be an element in type $\widetilde A_2$, not contained in a parabolic subgroup, and let
$\pi_{proj}$
denote the projection onto the span of KL basis elements in the projective cell.
Then
\[
\nuop(b_w^n)\sim \nuop\bigl(\pi_{proj}(b_w^n)\bigr).
\]
In particular, the full asymptotic growth of $b_w^n$ is determined by its projective cell part.
\end{Lemma}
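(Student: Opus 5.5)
The plan is to use the three-region decomposition of $\widetilde A_2$ into the trivial cell, the cyclic wall and the projective cell. For any $x\in\Hecke$ with nonnegative KL expansion,
\[
\nuop(x)=\nuop(\pi_{triv}(x))+\nuop(\pi_{cyc}(x))+\nuop(\pi_{proj}(x)),
\]
where $\pi_{triv}$ is the projection onto the span of $b_{id}$ and the elements of the finite parabolic subgroups. Strictly, the cells other than the trivial and projective ones are the cyclic wall together with its rotations and reversals; we treat all of these as ``cyclic''. It then suffices to show that the first two terms are $o(\nuop(b_w^n))$ when $x=b_w^n$.

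\emph{Trivial part.} Since $w$ is not contained in a parabolic subgroup, $w$ involves all of $1,2,3$. Hence $w$ lies in a cell strictly below the trivial cell. Because cells are compatible with the ideal structure, every KL summand of $b_w^n$ for $n\geq 1$ lies in the two-sided ideal spanned by the cell of $w$ and the cells below it. So the trivial contribution vanishes, and only the cyclic contribution remains.

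\emph{Case split on the cell of $w$.} If $w$ is projective, then $b_w^n$ lies in the projective ideal, the cyclic part is zero, and equality holds exactly. Otherwise $w$ lies on the wall, so up to cyclic relabeling and reversal $w=x_j$ with $j\geq 3$, since containing all three generators forces $j\geq 3$. This is exactly the setting of \autoref{cor:cyc-negl}. Combining \autoref{lem:cyc0}, \autoref{lem:cyc1} and \autoref{lem:cyc2}, the cyclic contribution is at most $2^{n-1}$. On the other hand, $\nuop(b_{x_j}^n)$ grows at rate $\beta^n$ up to a polynomial factor, and the relevant $\beta=\nu^\delta(b_{x_j})$ is at least $8$.

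\emph{Lower bound in the wall case.} I would obtain it from \autoref{lem:lower-by-standard-sum}: the KL support of $b_{x_j}^n$ has length at most $nj$ by \autoref{lem:length-support}. Its $\nu^\delta$-values are polynomially bounded: cubically in the projective cell by \autoref{lem:singular rays}, and quadratically on the wall by \autoref{thm:a2-wall-polynomial-growth}. Hence $\nuop(b_{x_j}^n)\geq c\,n^{-3}\beta^n$, and therefore
\[
\frac{\nuop(\pi_{cyc}(b_w^n))}{\nuop(b_w^n)}\leq \frac{2^{n-1}n^3}{c\,8^n}\to 0.
\]
Writing $\nuop(b_w^n)=\nuop(\pi_{proj}(b_w^n))+o(\nuop(b_w^n))$ then gives the claimed $\sim$.

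\emph{Main obstacle.} The delicate point is that \autoref{lem:cyc0}--\autoref{lem:cyc2} cover only $x_j$ themselves, not their rotations and reversals. These are handled by the symmetry of the Coxeter diagram permuting $1,2,3$ together with the anti-involution $w\mapsto w^{-1}$, both of which preserve KL bases and $\nuop$. One must also check that the product rule \autoref{lem:cyclic-mult} really prevents cyclic summands of other orientations from appearing. I would verify this by noting that the induction in \autoref{lem:cyc0} already shows that all non-$x_m$ summands are projective.
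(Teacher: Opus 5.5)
Your route is the paper's: if $w$ is projective the claim is immediate because the projective cell spans a two-sided ideal. If $w$ is on the wall, \autoref{lem:cyc0}--\autoref{lem:cyc2} bound the cyclic part by $2^{n-1}$, and this is swamped by the lower bound coming from \autoref{lem:lower-by-standard-sum}. Your explicit bound $\nu(b_{x_j}^n)\geq c\,n^{-3}\beta^n$ with $\beta\geq 8$ is a more careful version of the paper's estimate $\nu(b_{x_j}^n)\geq (j-1)^n$.

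One step is false as you formulated it. You let $\pi_{triv}$ project onto $b_{id}$ \emph{and all elements of finite parabolic subgroups}. But the trivial two-sided cell is $\{id\}$ alone. The other parabolic elements lie either in the wall cell ($1$, $12$, etc., i.e.\ $x_1,x_2$ up to relabeling) or in the projective cell ($121$, $232$, $131$). Your cell argument only shows that $b_{id}$ cannot occur, and the stronger claim is wrong. By \autoref{lem:satake}, $b_{1213121}\mapsto 6[L(\alpha_1+\alpha_2)]$ and $b_{121}\mapsto 6[L(0)]$. The tensor square of the adjoint representation of $PGL_3$ contains the trivial module once, so $b_{1213121}^2$ contains $6b_{121}$, even though $1213121$ has full support. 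Moreover, $121$ lies in the projective cell, so your three-term decomposition double counts it against the $\pi_{proj}$ of the statement.

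The repair is easy. Take $\pi_{triv}$ to be the projection onto $b_{id}$ only; it vanishes because $\{b_z\mid z\neq id\}$ spans a two-sided ideal. Treat the wall-cell parabolic elements as cyclic; the inductions behind \autoref{lem:cyc0}--\autoref{lem:cyc2} already show that every summand other than the single cyclic $b_{x_m}$ is projective, so they never occur. Leave the $121$-type elements inside $\pi_{proj}$. With this correction your argument is complete and agrees with the paper's, which simply asserts that all summands of $b_w^n$ lie in the cyclic or projective cells.
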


\begin{proof}
If $w$ lies in the projective cell, there is nothing to prove, since the span of the projective cell is a two-sided ideal (by the definition of cells).

The summands of $b_w^n$ are all contained within the projective, or cyclic cells. Let $\pi_{proj},\pi_{cyc}$ denote the projection onto these cells respectively.
Assume now that $w$ is cyclic on the wall. Decompose
\[
b_w^n=\pi_{cyc}(b_w^n)+\pi_{proj}(b_w^n).
\]
Since KL structure constants are nonnegative, both terms have nonnegative coefficients, and hence
\[
\nuop(b_w^n)
=
\nuop\bigl(\pi_{cyc}(b_w^n)\bigr)
+
\nuop\bigl(\pi_{proj}(b_w^n)\bigr).
\]
By \autoref{cor:cyc-negl}, we have that \[
\nuop\bigl(\pi_{cyc}(b_w^n)\bigr)
\in
o\bigl(\nuop(b_w^n)\bigr).
\]
Then, since
\[
\nuop(b_w^n)
=
\nuop\bigl(\pi_{proj}(b_w^n)\bigr)
+
\nuop\bigl(\pi_{cyc}(b_w^n)\bigr),
\]
it follows that
\[
\nuop(b_w^n)\sim \nuop\bigl(\pi_{proj}(b_w^n)\bigr).
\]
This proves the claim.
\end{proof}

\begin{Remark}\label{rem:a2reduction-pf}
In the language of, for example, \cite{LaTuVa-growth-pfdim,LaTuVa-growth-pfdim-inf}, the content of \autoref{prop:a2reduction} is really a Perron--Frobenius statement.

Indeed, multiplication by a fixed cyclic on-the-wall element $b_w$ defines a nonnegative operator on the span of the three relevant cells: the trivial cell, the cyclic wall cell, and the projective cell. With respect to this decomposition, the operator is block upper triangular, because the span of the projective cell is a two-sided ideal and hence a final class.

The diagonal blocks corresponding to the trivial cell and the cyclic wall cell have spectral radius at most $2$; in the cyclic case this is exactly what underlies \autoref{cor:cyc-negl}. By contrast, the block on the projective cell has spectral radius strictly larger than $2$, by \autoref{thm:general-lower-upper}. Therefore the projective block is the unique final class and the unique basic class.

Standard Perron--Frobenius theory for nonnegative matrices then implies that the contribution of all other classes is $o(\rho^n)$ relative to the projective part, where $\rho>2$ is the spectral radius of the projective block. In particular,
\[
\nuop(b_w^n)\sim \nuop\bigl(\pi_{proj}(b_w^n)\bigr).
\]

So the proof above is simply the concrete coefficient-sum version of this general Perron--Frobenius mechanism.
\end{Remark}

\subsubsection{Type affine \texorpdfstring{$A_2$}{A2}: One-sided and Two-sided Longest Elements}\label{sec:a2results}

We now record the ``uniform bound'' computation beyond the wall. We briefly recall the key combinatorial input is the following criterion, from \cite{LiPa}.

We briefly recall the definition of a braid distance below, and an important lemma, as in \cite{LiPa}.

\begin{Definition}\label{def:braid-triplets} If $w = r_1r_2...r_n$ is a (not necessarily reduced) expression for $w$, where $r_i$'s are simple reflections, we say that there is a braid triplet in position $i$ $(1 < i < n)$ if $r_{i-1} = r_{i+1}$, and $r_i \neq r_{i-1}$. 

We define the distance between a braid triplet in position $i$ and position $j > i$ to be the number $i-j-1$.
\end{Definition}

\begin{Lemma}\label{lem:lipa-criterion}
Assume a word has no adjacent equal simple reflections.
Then it is reduced if and only if the distance between any two braid triplets is odd.
\end{Lemma}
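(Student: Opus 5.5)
We plan to prove both directions using Tits' solution of the word problem for Coxeter groups. A word is non-reduced if and only if some finite sequence of braid moves turns it into a word containing two adjacent equal letters, since such a pair can then be deleted. In $\widetilde A_2$ every pair of distinct generators satisfies a braid relation of length three, so the only braid moves are $aba\leftrightarrow bab$. We work with words $r_1\cdots r_n$ having no adjacent equal letters. At each interior position $i$, exactly one of two things happens: either $r_{i+1}=r_{i-1}$, giving a braid triplet at $i$, or $r_{i+1}$ is the third letter different from $r_{i-1}$ and $r_i$, a \emph{cyclic step}. So the word is determined by $r_1r_2$ together with the set of triplet positions. Throughout we read the distance between triplets at $i<j$ as $j-i-1$, the number of cyclic steps strictly between them.

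\textbf{Reducing to consecutive triplets.} If triplets sit at $i<j<k$, then $(k-i-1)=(j-i-1)+(k-j-1)+1$. Hence if all consecutive distances are odd, all distances are odd, and conversely an even distance forces some consecutive pair at even distance. It therefore suffices to work with consecutive triplets.

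\textbf{Necessity.} Suppose consecutive triplets at $i<j$ have even distance $2m$. We show by induction on $m$ that the subword $r_{i-1}\cdots r_{j+1}$ can be braided into a word with an adjacent repetition. For $m=0$ the subword is $abab$; applying $aba\to bab$ gives $babb$. For $m\geq 1$, apply the braid move at the second triplet. This pulls the triplet one step to the left and creates a new triplet adjacent to it, so two consecutive triplets now have distance $2m-2$. An explicit check: $abacbc\to ababcb$ contains $abab$, i.e. the $m=0$ case. Since braid moves on a subword extend to the whole word, the word is not reduced.

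\textbf{Sufficiency.} Let $\mathcal P$ be the property ``no adjacent equal letters and all triplet distances odd''. We claim $\mathcal P$ is invariant under braid moves. A braid move $aba\to bab$ at the triplet at position $i$ changes only $r_{i-1},r_i,r_{i+1}$. We would tabulate the local configurations $r_{i-2}\,aba\,r_{i+2}$, where $r_{i\pm2}\in\{b,c\}$ because there are no adjacent equal letters.
\begin{itemize}
\item If $r_{i-2}=b$ or $r_{i+2}=b$, the new word has an adjacent repetition. But then $i-1$ or $i+1$ was itself a triplet of the old word, at distance $0$ from $i$, which $\mathcal P$ forbids.
\item Otherwise $r_{i-2}=r_{i+2}=c$, so $c\,aba\,c$ becomes $c\,bab\,c$. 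The triplet at $i$ survives, and the cyclic/triplet status of positions $i\pm1$ and $i\pm2$ is unchanged.
\end{itemize}
So the triplet positions, and hence all distance parities, are preserved. Since a braid move needs a braid triplet, this covers all moves. By Tits' theorem no word satisfying $\mathcal P$ can be braided into one containing $ss$, hence it is reduced.

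\textbf{Main obstacle.} The delicate step is the local case analysis in the invariance argument. We must check carefully that the statuses at positions $i\pm1$ and $i\pm2$ really do not change, including when $i$ is near the ends of the word, where fewer neighbours exist. If the bookkeeping fails, the fallback is the alcove picture: cyclic steps are straight walks along a strip, and braid triplets are turns around a vertex. Odd distances mean the turns alternate sides, giving a monotone zigzag that crosses each hyperplane at most once, which is reducedness.
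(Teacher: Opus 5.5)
Your overall strategy is sound. The paper itself gives no argument here and simply cites \cite[Lemma~1.1]{LiPa}. Three parts of your proposal are correct:
\begin{itemize}
\item The reduction to consecutive triplets.
\item The necessity induction. Stated precisely: the braid move at the second triplet $j$ keeps $j$ a triplet and turns the cyclic position $j-2$ into a triplet. So $i$ and $j-2$ become consecutive triplets at distance $2m-2$.
\item The use of Tits' word property for sufficiency.
\end{itemize}
The gap is in the invariance step, exactly where you suspected. The claim that a braid move at a triplet $i$ leaves positions $i\pm2$ unchanged is false. In your configuration $c\,aba\,c\mapsto c\,bab\,c$, position $i-2$ is a triplet before the move iff $r_{i-3}=a$, and after it iff $r_{i-3}=b$. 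Since $r_{i-3}\neq r_{i-2}=c$, exactly one of these holds. So the status at $i-2$ \emph{always} flips, and likewise at $i+2$, whenever these are interior positions.

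For instance, $bcaba$ has its only triplet at position $4$. The move there gives $bcbab$, with triplets at positions $2$ and $4$. Your own necessity example $abacbc\mapsto ababcb$ shows the same phenomenon: a new triplet appears at position $3$, two steps from the move. So the sentence ``the triplet positions, and hence all distance parities, are preserved'' is not true as stated.

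The repair is short. Since $j-i-1$ is odd iff $i\equiv j\pmod 2$, your property $\mathcal P$ says precisely: no adjacent equal letters, and all triplet positions have the same parity. Your local analysis shows that, under $\mathcal P$, the move has the form $c\,aba\,c\mapsto c\,bab\,c$, or a truncation of this at the ends of the word. Such a move:
\begin{itemize}
\item creates no adjacent repetition;
\item keeps $i$ a triplet and $i\pm1$ non-triplets;
\item cannot affect any position outside $\{i-2,\ldots,i+2\}$, because the status at $k$ depends only on $r_{k-1}$ and $r_{k+1}$.
\end{itemize}
Together with the flip computation, the new triplet set is $T\,\triangle\,\bigl(\{i-2,i+2\}\cap\{2,\ldots,n-1\}\bigr)$. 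As $i\in T$ and $i\pm2\equiv i\pmod 2$, all new triplet positions still share one parity. Hence $\mathcal P$ is invariant under braid moves, and Tits' theorem finishes the argument.

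With this correction your proof is complete and self-contained, and the alcove fallback is not needed.
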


\begin{proof}
See \cite[Lemma~1.1]{LiPa}.
\end{proof}

Now, by the definition of $\theta(m,n)$, multiplication on the right by the simple reflection $2m-2n-1$, written modulo 3, induces an even distance between braid relations, thus reducing the word by \autoref{lem:lipa-criterion}. Of course, multiplication on the right by the simple reflection $2m-2n+1$ induces a quadratic relation, and so also reduces the word. Thus, we have the easy multiplication rules
\begin{gather}\label{eq:theta4}
\begin{aligned}
b_{\theta(m,n)}b_{{2m-2n+1}}&=2b_{\theta(m,n)},\\
b_{\theta(m,n)}b_{{2m-2n-1}}&=2b_{\theta(m,n)}.
\end{aligned}
\end{gather}
In other words, multiplying on the right by a simple reflection which induces either a quadratic relation
or a braid relation simply doubles the element.

We also use the following beyond-the-wall multiplication rules.

\begin{Lemma}
We have
\begin{gather}\label{eq:theta5}
\begin{aligned}
b_{\theta(m,n)}b_{{2m-2n}}&=b_{\theta(m,n){(2m-2n})},\\
b_{3}b_{\theta(m,n)}&=b_{3\theta(m,n)},\\
b_{3}b_{\theta(m,n)}b_{({2m-2n})}&=b_{3\theta(m,n)(2m-2n)},\\
b_{\theta(m,n)}b_{(2m-2n)}b_{(2m-2n-1)}
&=b_{\theta(m,n+1)}+b_{\theta(m,n)}+b_{\theta(m+1,n-1)}+b_{\theta(m-1,n)},
\end{aligned}
\end{gather}
where terms $\theta(x,y)$ with $x$, or $y$ negative are set to zero.
\end{Lemma}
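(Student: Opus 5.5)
The plan is to derive all four identities from one principle. If $xs>x$ for a simple reflection $s$, then
\[
b_xb_s=b_{xs}+\sum_{z<x,\ zs<z}\mu(z,x)\,b_z ,
\]
where $\mu(z,x)$ is the leading coefficient of the KL polynomial. So each product equals the expected top term plus lower correction terms. The input for identifying these terms is the explicit combinatorics of Libedinsky--Patimo \cite{LiPa}: their closed formulas for $b_{\theta(m,n)}$ and the $\mu$-values between beyond-the-wall elements. The reducedness test throughout is \autoref{lem:lipa-criterion}.

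\textbf{First identity.} Appending $2m-2n$ to $\theta(m,n)$ creates no new braid triplet at even distance from an existing one, so by \autoref{lem:lipa-criterion} the word $\theta(m,n)(2m-2n)$ is reduced.
\begin{itemize}
\item The correction terms are the $b_z$ with $z<\theta(m,n)$, $z\,(2m-2n)<z$ and $\mu(z,\theta(m,n))\neq 0$.
\item I would show no such $z$ exists. The $\mu$-nonzero elements below $\theta(m,n)$ are listed in \cite{LiPa}; they are essentially the $\theta(m',n')$ for nearby $(m',n')$, together with wall elements.
\item For each of these one checks, again via the braid-triplet parity criterion, that its right descent set does not contain $2m-2n$. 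Equivalently, one can quote the formula of \cite{LiPa} expressing $b_{\theta(m,n)(2m-2n)}$ as the corresponding sum of $N$-terms and multiply directly.
\end{itemize}

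\textbf{Second and third identities.} The second identity is the left-handed version of the first. Prepending $3$ keeps the word reduced, and no $\mu$-neighbour of $\theta(m,n)$ has $3$ as a left descent, since all words $\theta(m',n')$ begin $12\cdots$. The third identity then follows by composing the first two. Right-multiply the second identity by $b_{2m-2n}$, and apply the first-identity argument with $3\theta(m,n)$ in place of $\theta(m,n)$. The $\mu$-neighbours of $3\theta(m,n)$ are obtained from those of $\theta(m,n)$ by prepending $3$ (again from \cite{LiPa}), and their right descents are unchanged.

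\textbf{Fourth identity, the main obstacle.} By the first identity, the left side equals $b_{\theta(m,n)(2m-2n)}\,b_{2m-2n-1}$. The top term is $b_{\theta(m,n)(2m-2n)(2m-2n-1)}=b_{\theta(m,n+1)}$, directly from the definition of $\theta$. The remaining terms are the $\mu$-neighbours $z$ of $\theta(m,n)(2m-2n)$ that have $2m-2n-1$ as a right descent. I expect these to be exactly:
\begin{itemize}
\item $\theta(m,n)$, whose right descent $2m-2n-1$ is given by \autoref{eq:theta4};
\item $\theta(m+1,n-1)$, which has the same length as $\theta(m,n)$;
\item $\theta(m-1,n)$.
\end{itemize}
Each should appear with $\mu=1$. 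I would verify this by reading off the degree-$(\ell-1)/2$ coefficients from the LiPa formula $b_{\theta}=\sum N_{\theta'}$ restricted to the relevant elements, and checking descents with \autoref{lem:lipa-criterion}.

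The boundary conventions (terms $\theta(x,y)$ with $x<0$ or $y<0$ set to zero) come out of the same bookkeeping: when $m=0$ or $n=0$ the corresponding neighbour simply does not exist. A sanity check is to apply $\nu^\delta$ to both sides and compare with the quadratic-in-$m,n$ closed form for $\nu^\delta(b_{\theta(m,n)})$ from \cite{LiPa}. The hardest part is ensuring that no other $\mu$-neighbour, such as a wall element or a relabelled $\theta$, contributes. For this I would rely on the completeness of the LiPa list of $\mu$-nonzero pairs in the projective region.
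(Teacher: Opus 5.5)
\textbf{There is a genuine gap.} Your strategy is a reasonable way to re-derive what the paper simply cites as \cite[Proposition~1.8]{LiPa}: use the rule $b_xb_s=b_{xs}+\sum_{z<x,\,zs<z}\mu(z,x)b_z$ together with the closed formula $b_{\theta(m,n)}=\sum_i v^{2i}N_{\theta(m-i,n-i)}$. As written, however, the $\mu$-bookkeeping is wrong, and the fourth identity, the only substantial one, is guessed rather than proved.

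\textbf{Identities one to three.} From the closed formula, the coefficient of $\delta_z$ in $b_{\theta(m,n)}$ is $\sum_{i:\,z\le\theta(m-i,n-i)}v^{\ell(\theta(m,n))-\ell(z)-2i}$. So $\mu(z,\theta(m,n))\neq0$ exactly when $z$ is a Bruhat coatom, i.e.\ one of the (at most four) words from \cite[Corollary~1.3]{LiPa} listed in Case~3 of the proof of \autoref{lem:theta-w0-bound}. None of these is of the form $\theta(m',n')$, since they have even length. With this list the first identity does follow from \autoref{lem:lipa-criterion}. For the second identity, the coatom obtained by deleting the first letter begins $23\cdots$ when $m\geq1$. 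Prepending $3$ therefore \emph{does} create a braid triplet, and you need the parity count (distance $2m-1$, odd), not ``all words begin with $12$''. For the third identity, the $\mu$-neighbours of $3\theta(m,n)$ are not just those of $\theta(m,n)$ with $3$ prepended. They include $\theta(m,n)$ itself. Because $b_3N_x\neq N_{3x}$, they also include every $w\le\theta(m,n)$ with $\ell(w)=\ell(\theta(m,n))-2$ and $3w<w$ (e.g.\ $w=323$ for $(m,n)=(0,1)$). These also have to be checked against the right descent $2m-2n$.

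\textbf{Fourth identity.} Put $s'=s_{2m-2n}$ and $t'=s_{2m-2n-1}$. What you need is $\mu(\,\cdot\,,\theta(m,n)s')$, and the $\theta$-formula does not give it. Expand $b_{\theta(m,n)s'}=\sum_iv^{2i}N_{\theta(m-i,n-i)}b_{s'}$ using $N_xb_s=N_{xs}+\sum_{y<ys\le x}\bigl(v^{a-1}\delta_y+v^{a-2}\delta_{ys}\bigr)$ with $a=\ell(x)-\ell(y)$ (for $xs>x$). This shows the $\mu$-neighbours of $\theta(m,n)s'$ are its coatoms together with all $u\le\theta(m,n)$ of length $\ell(\theta(m,n))-2$ with $us'<u$. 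The term $\theta(m-1,n)$ is of this second kind, which is why it appears at codistance three. Other elements are of this kind too, e.g.\ $u=121$ for $(m,n)=(0,1)$, which drops out only because $t'=s_3$ is not a right descent of it. So the actual content to be proved is:
\begin{enumerate}
\item the Bruhat relations $\theta(m+1,n-1)<\theta(m,n)s'$ and $\theta(m-1,n)<\theta(m,n)$;
\item a complete enumeration showing that, among all coatoms of $\theta(m,n)s'$ and all such codistance-three elements, only $\theta(m,n)$, $\theta(m+1,n-1)$ and $\theta(m-1,n)$ have $t'$ as a right descent.
\end{enumerate}
Appealing to ``the completeness of the LiPa list of $\mu$-nonzero pairs'' does not supply (b). Producing that enumeration is exactly what the cited proposition does; compare the two-letter-deletion check in Case~3 of the proof of \autoref{lem:theta-w0-bound}.
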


\begin{proof}
See \cite[Proposition~1.8]{LiPa}.
\end{proof}

Recall also that in finite type $A_2$,
\[
b_{w_0}=b_1b_2b_1-b_1=b_2b_1b_2-b_2.
\]

\begin{Lemma}\label{prop:uniform6}
For beyond-the-wall elements $x$ in type $\wt A_2$ one has the uniform estimate
\[
\nuop\bigl(b_xb_{w_0}\bigr)\le 12.
\]
\end{Lemma}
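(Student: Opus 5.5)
We plan to compute $b_xb_{w_0}$ directly from the factorization $b_{w_0}=b_1b_2b_1-b_1$ (or the symmetric one with $2$). We then control each step using the beyond-the-wall multiplication rules \eqref{eq:theta4} and \eqref{eq:theta5}. Since $\nuop$ is additive on KL-positive elements, it suffices to bound $\nuop(b_xb_1b_2b_1)$. The subtracted term $b_xb_1$ is a KL summand of $b_xb_1b_2b_1$: one has $b_1b_2b_1=b_{w_0}+b_1$ with both terms KL-positive, so $b_xb_{w_0}$ is KL-positive, and $\nuop(b_xb_{w_0})=\nuop(b_xb_1b_2b_1)-\nuop(b_xb_1)\le \nuop(b_xb_1b_2b_1)-1$.

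\textbf{Step 1: normal forms.} We use the Libedinsky--Patimo description: every beyond-the-wall element is of the form $\theta(m,n)$, up to cyclic relabeling of $\{1,2,3\}$, reversal, and a bounded prefix or suffix. The possible decorations are a left factor $3$, and a right factor $(2m-2n)$ or $(2m-2n)(2m-2n-1)$, as in \eqref{eq:theta5}. Since the product is on the right, we may reduce to $x=\theta(m,n)$ or $x=3\theta(m,n)$ together with a right tail of length at most $2$. The left factor $3$ commutes past the computation by the second and third lines of \eqref{eq:theta5}, so we may essentially ignore it.

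\textbf{Step 2: multiply by generators one at a time.} For $x=\theta(m,n)$, right multiplication by a generator $s$ has three outcomes:
\begin{itemize}
\item[(i)] if $s$ equals $2m-2n\pm1$, it doubles $b_x$ by \eqref{eq:theta4};
\item[(ii)] if $s=2m-2n$, it gives a single KL element by the first line of \eqref{eq:theta5};
\item[(iii)] if we then multiply by the next letter, the last line of \eqref{eq:theta5} gives at most four summands, each again of $\theta$-type.
\end{itemize}
Since $\{1,2\}$ together with $\{2m-2n\pm1,2m-2n\}$ (mod $3$) exhaust all residues, we distinguish cases according to which of $1,2$ equals $2m-2n$ mod $3$. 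We choose the factorization $b_1b_2b_1$ or $b_2b_1b_2$ so that the first generator is a doubling one whenever possible.

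In a typical case, the first letter doubles, giving a factor $2$. The second letter either doubles again or produces one element. The third letter then produces at most four terms via the four-term rule, or doubles. This gives at most $2\cdot 2\cdot 4=16$ or fewer. A sharper tally is obtained by applying the four-term rule only once: the bound is $2\cdot(4+\text{small})$ after subtracting $\nuop(b_xb_1)\ge 1$. For each new $\theta(m',n')$ appearing, we recompute which generators double it, since the residue $2m'-2n'$ shifts by $\pm2$. This bookkeeping yields at most $12$.

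\textbf{Main obstacle.} The main difficulty is the finite but fiddly case analysis. It must cover the boundary cases where $m$ or $n$ is small, where the convention that negative-index $\theta$ terms vanish only helps. It must also cover elements adjacent to the wall, namely $y_j$ and $z_j$, for which the rules of \eqref{eq:theta5} may not apply verbatim. For those we would instead use \autoref{lem:cyclic-mult} to express $b_{y_j}$ as $b_{x_{j-1}}b_j-b_{x_j}$ and compute with $b_{w_0}$ explicitly. Independently of this case analysis, the key structural fact making the bound uniform is that all rules in \eqref{eq:theta4}--\eqref{eq:theta5} have at most four terms, with coefficients independent of $m$ and $n$.
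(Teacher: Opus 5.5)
Your reduction $\nuop(b_xb_{w_0})=\nuop(b_xb_1b_2b_1)-\nuop(b_xb_1)$ and your case split by the residue of $2m-2n$ follow the paper's strategy. For $x=\theta(m,n)$ they settle two of the three cases:
\begin{enumerate}
\item If $s_{2m-2n}$ is the affine generator, both finite generators double, and $b_xb_{w_0}=6b_x$.
\item If $s_{2m-2n}$ is finite and the other finite generator is $s_{2m-2n-1}$, start the word for $w_0$ with $s_{2m-2n-1}$. This gives $2b_xb_{s_{2m-2n}}b_{s_{2m-2n-1}}-2b_x$, and the last line of \eqref{eq:theta5} leaves six summands.
\end{enumerate}

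The gap is the third case, where the affine generator is $s_{2m-2n-1}$ and the finite generators are $s_{2m-2n}$ and $s_{2m-2n+1}$. With either reduced word for $w_0$ you must compute $b_{\theta(m,n)}b_{s_{2m-2n}}b_{s_{2m-2n+1}}$, either immediately or after one doubling by $s_{2m-2n+1}$. Your item (iii) tacitly assumes the letter after $s_{2m-2n}$ is $s_{2m-2n-1}$, but here it is $s_{2m-2n+1}$. So the four-term rule does not apply. Nor is there a doubling, because $\theta(m,n)s_{2m-2n}s_{2m-2n+1}$ is still reduced (\autoref{lem:lipa-criterion}). The KL expansion of $b_{\theta(m,n)s_{2m-2n}}b_{s_{2m-2n+1}}$ involves the $\mu$-coefficients $\mu(z,\theta(m,n)s_{2m-2n})$, which \eqref{eq:theta4}--\eqref{eq:theta5} do not encode. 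Hence ``this bookkeeping yields at most $12$'' is unsupported exactly here.

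This is the one genuinely new computation in the paper's proof. In \autoref{S:Details} it passes to $\Hecke_v(\wt A_2)$ and uses the Libedinsky--Patimo formula $b_{\theta(m,n)}=\sum_i v^{2i}N_{\theta(m-i,n-i)}$. It then determines which elements one and two lengths below $\theta(m,n)$ contribute $v$-free terms. The result is a second four-term rule (\autoref{rem:extra-beyond-wall-rule}) whose summands include $b_{\theta(m,n)}$. With it, $b_xb_{w_0}=2(\text{four terms})-2b_x$ again has at most six summands.

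The same product also controls the decorated elements, so reducing to $\theta(m,n)$ with a short right tail does not bypass it.
\begin{enumerate}
\item If $x=\theta(m,n)a$ with $a=s_{2m-2n}$ finite, then $b_xb_{w_0}=b_{\theta(m,n)}b_ab_{w_0}=2b_{\theta(m,n)}b_{w_0}$, which needs the bad case above.
\item If $a=s_{2m-2n}$ is affine, starting $w_0$ with $s_{2m-2n-1}$ lets you apply the four-term rule first. But the three new terms $\theta(m',n')\neq\theta(m,n)$ all satisfy $2m'-2n'\equiv 2m-2n+1 \pmod 3$. So the two remaining letters are $s_{2m'-2n'}$ followed by $s_{2m'-2n'+1}$, which is exactly the missing product. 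Starting with $s_{2m-2n+1}$ hits it at once.
\end{enumerate}
Once that extra rule is available, the affine-decoration case gives at most $3+3\cdot 3=12$ summands, matching the stated bound. Without it, neither the undecorated nor the decorated case is proved.
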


\begin{proof}
There are three cases, depending on which simple reflection $\theta(m,n)$ ends in.
In the first case one expands
\[
b_{\theta(m,n)}b_{w_0}=b_{\theta(m,n)}(b_2b_1b_2-b_2),
\]
uses \autoref{eq:theta4} to multiply by $b_2b_1$, and then applies \autoref{eq:theta5}
to the remaining right multiplication by $b_2$.
This produces at most six summands.

In the second case one chooses the simple reflections so that successive right multiplications by
$b_1$, $b_2$, and $b_1$ all act by doubling, which again yields at most six summands after subtracting the final $b_1$ term.

The third case is similar: one first multiplies by $b_1$, then by $b_2$, and then analyses the final multiplication by $b_1$ using the same local rules.
The same bound follows.

For elements beyond-the-wall not equal to $\theta(m,n)$ i.e. those of the form $\theta(m,n)(2m-2n)$, working through each case we tend to get double the amount of summands, giving us 12. We postpone the technical computation deriving this to \autoref{S:Details}.
\end{proof}

Recall $\beta=\nu^\delta(b_w)$.

\begin{Theorem}\label{thm:a2-longest}
Let $w \in \widetilde A_2$.
\begin{enumerate}
\item
If $w \in \widetilde A_2$ is two-sided longest with respect to a parabolic subgroup $A_2 < \widetilde{A_2}$, then
\[
\nuop(b_w^n)\sim C_w\cdot n^{-3/2}\cdot\beta^n
\]
for some constant $C_w>0$.

\item
If $w \in \widetilde A_2$ is one-sided longest, then there exist constants $c_1(w), c_2(w)>0$ such that
\[
c_1(w)\cdot n^{-3/2}\cdot\beta^n
\;\le\;
\nuop(b_w^n)
\;\le\;
c_2(w)\cdot n^{-3/2}\cdot\beta^n
\]
for all $n\gg 1$.
Equivalently,
\[
\nuop(b_w^n)\in\Theta(n^{-3/2}\cdot \beta^n).
\]
\end{enumerate}
\end{Theorem}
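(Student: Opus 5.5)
Part (a) follows from \autoref{prop:satake-two-sided}. Since $w$ is two-sided longest, $b_w\in\Hecke_{\mathrm{sph}}$ by \autoref{L:TwoSided}. For $\widetilde A_2$ we have $G=PGL3$ and $|\Phi^+|=3$. We must exclude $w=w_0$, which would give the trivial representation and is not covered by the statement anyway. Then $\nu(b_w^n)=\nu(V_w^{\otimes n})$ with $V_w$ faithful of dimension $\beta$, and \cite[Theorem 2.5]{CEO19} gives $\nu(b_w^n)\sim C_w\cdot n^{-3/2}\cdot\beta^n$.

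For part (b), assume $b_we=b_w$; the case $eb_w=b_w$ is symmetric. If $w$ lies in a proper parabolic subgroup we are in the finite situation of \autoref{ex:finite}, so we exclude that case, as the section does throughout. The wall elements $x_j$ are not one-sided longest for $j$ large, so $w$ lies in the projective cell. The upper bound $\nu(b_w^n)\le C_2\, n^{-3/2}\beta^n$ is exactly \autoref{prop:satake-bounds}, via the comparison \autoref{eq:one-sided-satake-comparison}.

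The real content is the matching lower bound. I would reverse the comparison \autoref{eq:one-sided-satake-comparison} using \autoref{prop:uniform6}. Write $b_w^n=\sum_x a_x^{(n)}b_x$; every $x$ with $a_x^{(n)}\neq0$ is beyond the wall, since the projective cell is a two-sided ideal. Then
\[
|W_f|\,\nu(eb_w^n)=\nu(b_{w_0}b_w^n)=\sum_x a_x^{(n)}\,\nu(b_{w_0}b_x)\le 12\,\nu(b_w^n).
\]
Here I need the left-handed version of \autoref{prop:uniform6}, namely $\nu(b_{w_0}b_x)\le 12$. This follows from the right-handed version via the anti-involution $\delta_x\mapsto\delta_{x^{-1}}$, which preserves the KL basis, the projective cell, $w_0$, and $\nu$. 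Combining this with \autoref{lem:satake-one-sided} gives
\[
\nu(b_w^n)\ge \tfrac{|W_f|}{12}\,\nu\bigl((eb_w)^n\bigr)=\tfrac{1}{2}\,\nu(V_w^{\otimes n}).
\]

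It remains to check that $V_w$ is a nonzero, nontrivial, hence faithful, $PGL3$-representation of dimension $\beta$. It is nonzero because $\dim V_w=\beta\ge 1$. If it were trivial, $\beta$ would equal $1$, which forces $w=\mathrm{id}$. Then \cite[Theorem 2.5]{CEO19} gives $\nu(V_w^{\otimes n})\sim C\cdot n^{-3/2}\beta^n$, which supplies $c_1(w)$ for $n\gg1$.

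The main obstacle is \autoref{prop:uniform6} itself, i.e. the uniform bound over all beyond-the-wall $x$, including the non-$\theta$ forms deferred to \autoref{S:Details}. A second point to check is that the anti-involution transfers this bound to left multiplication by $b_{w_0}$. One also has to handle finitely many small elements near the wall separately. This is harmless, since these contribute only finitely many $x$, and we can take the maximum of $\nu(b_{w_0}b_x)$ over them as the constant.
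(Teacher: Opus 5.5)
Your proposal is correct and is essentially the paper's proof. Part (a) is \autoref{prop:satake-two-sided}, and part (b) sandwiches $\nu(b_w^n)$ between $\tfrac12\nu(eb_w^n)$ and $6\,\nu(eb_w^n)$, using $\nu(b_{w_0}b_x)\geq 1$ and the uniform bound of \autoref{prop:uniform6}, then applies \cite[Theorem 2.5]{CEO19} to $V_w$; your anti-involution $\delta_x\mapsto\delta_{x^{-1}}$ makes explicit the left/right switch that the paper passes over silently. One small slip: a representation whose constituents are all trivial need not be one-dimensional (for $w=w_0$ one gets $V_{w}\cong L(0)^{\oplus 6}$), so faithfulness of $V_w$ should instead come from $w\notin W_f$, which makes $b_{w_0}b_w$ not a multiple of $b_{w_0}$ and hence gives $V_w$ a constituent $L(\lambda)$ with $\lambda\neq 0$.
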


\begin{proof}
Part (a) is \autoref{prop:satake-two-sided}, since $|\Phi^+|=3$ in type $A_2$.

Now suppose that $w$ is one-sided longest on the right, so that $b_we=b_w$. Note that $w$ being one-sided longest implies;
\[\nu((eb_we)^n) = \nu((eb_w)^n) = \nu(eb_w^n).\]
Since $\nu(eb_w^n) \sim C_w \cdot n^{\frac{-3}{2}}\cdot \beta^n$ by \autoref{prop:satake-two-sided}, this implies that there exist constants $A_w, B_w$ such that \[A_w \cdot n^{\frac{-3}{2}}\cdot\beta^n \leq \nu(eb_w^n) \leq B_w \cdot n^{\frac{-3}{2}}\cdot\beta^n.\]
Utilising \autoref{prop:uniform6}, we note that $\frac{1}{2}\nu(eb_w^n) \leq \nu(b_w^n) \leq 6\nu(eb_w^n)$. Thus, \[\frac{1}{2}A_w \cdot n^{\frac{-3}{2}}\cdot\beta^n \leq \nu(b_w^n) \leq 6 B_w \cdot n^{\frac{-3}{2}}\cdot\beta^n.\]
The theorem is proved.
\end{proof}

\begin{Remark}
The argument for the one-sided longest case uses only one input beyond Satake: a uniform bound on $\nuop(b_xb_{w_0})$ for $x$ in that big cell.
Consequently, in any affine Weyl group, if one knows that
\[
\nuop(b_xb_{w_0})\le C
\]
for all $x$ in the big cell, then the same proof gives the analogue of the one-sided part of \autoref{thm:a2-longest}, with exponent $n^{-|\Phi^+|/2}$.
\end{Remark}

\begin{Remark}
At this point, in type $\wt A_2$ the expected $n^{-|\Phi^+|/2}=n^{-3/2}$ growth is established for all longest elements:
two-sided longest elements are covered directly by Satake, while one-sided longest elements are controlled by the one-sided Satake sandwich together with the reduction to the projective cell and \autoref{prop:uniform6}. This makes roughly $1/3$ of all elements.

What remains outside this picture are the elements that are not one-sided longest from either side.
Among the cyclic on-the-wall families, this unresolved case is exactly one of the three residue classes modulo $3$, so asymptotically it accounts for one third of the wall elements, up to the usual lower-order boundary effects.
\end{Remark}

\section{The notorious cell in affine type C2}\label{S:C2}

Let us look at affine type \(C_2\), and in particular at its notorious wall. Let the affine Coxeter generators be \(1,2,3\), so that
\[
\widetilde C_2
=
\langle 1,2,3
\mid
1212=2121,\ 1313=3131,\ 23=32,\ 1^2=2^2=3^2=e
\rangle.
\]
The picture to keep in mind is \autoref{fig:b2-alcoves}.

\begin{figure}[ht]
\centering
\includegraphics[height=6cm]{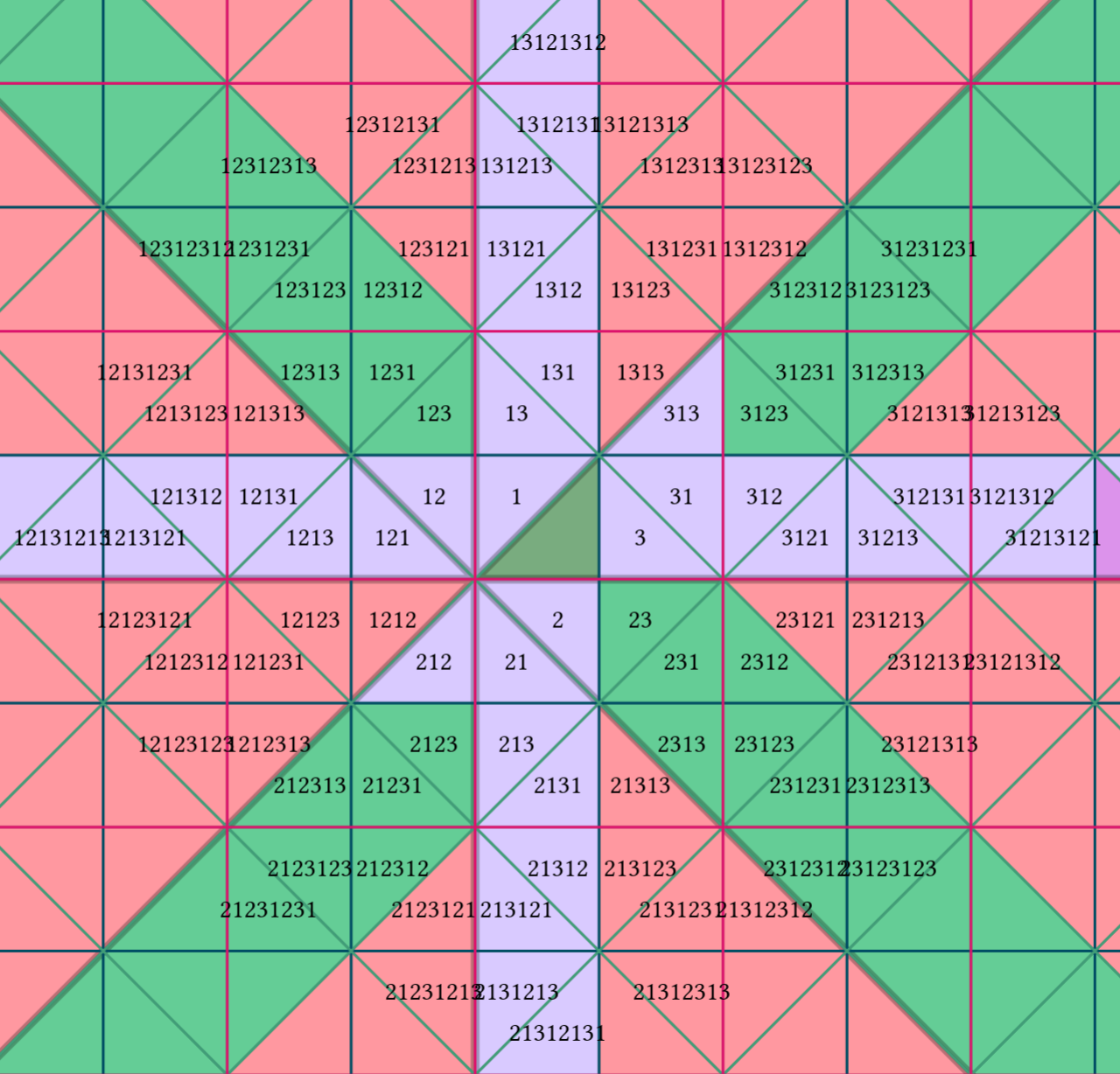}
\includegraphics[height=6cm]{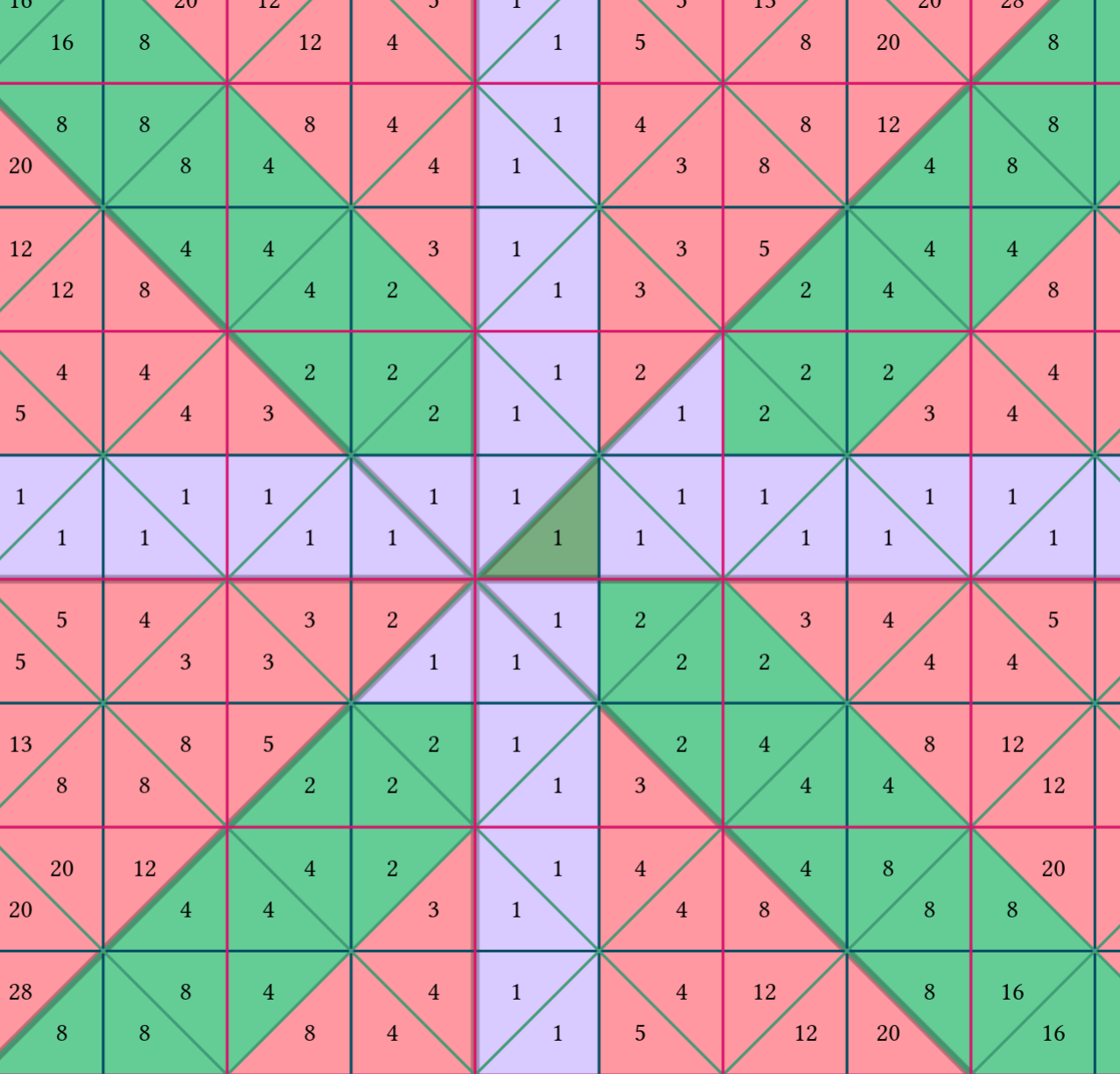}
\includegraphics[height=6cm]{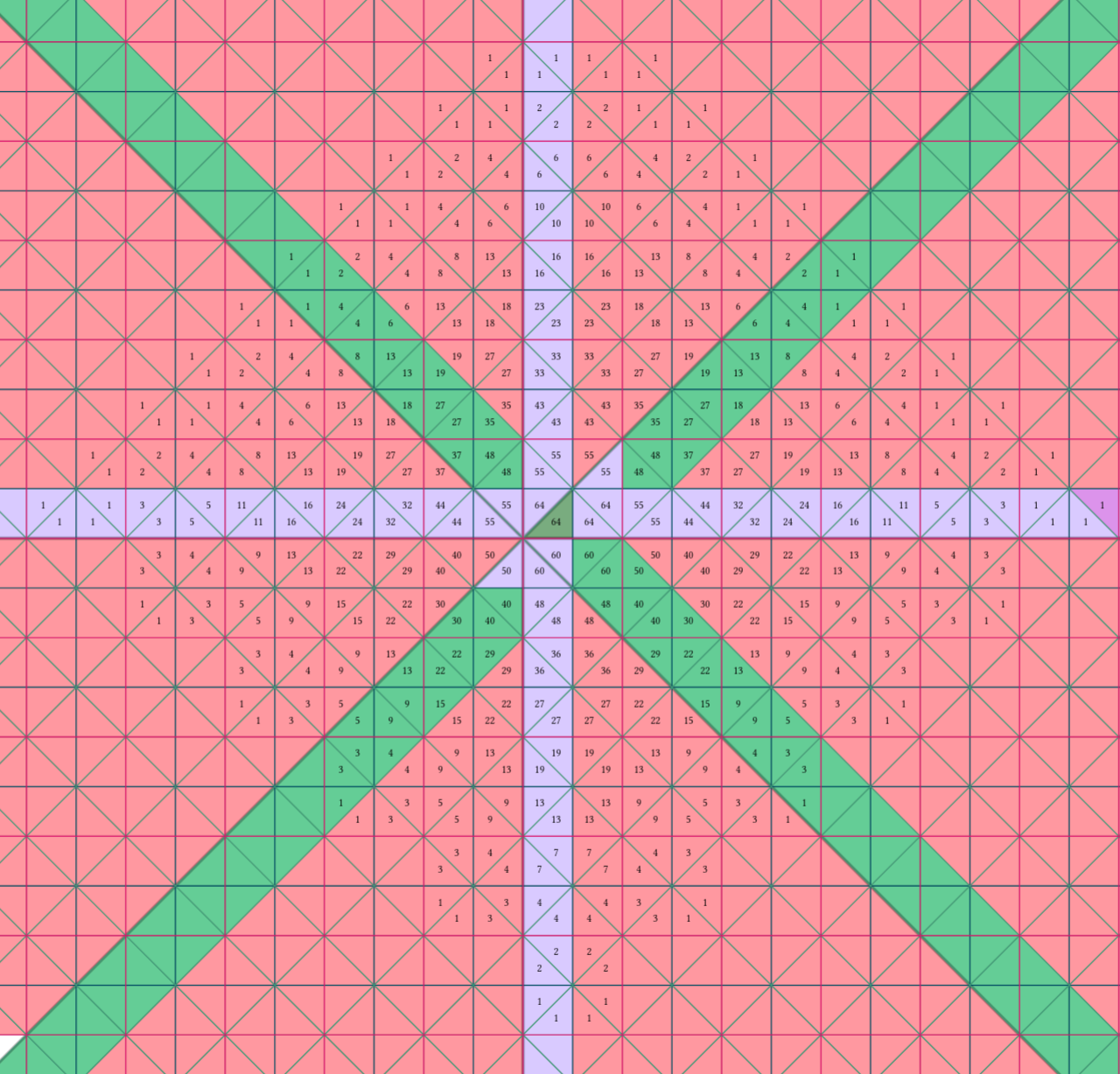}
\includegraphics[height=6cm]{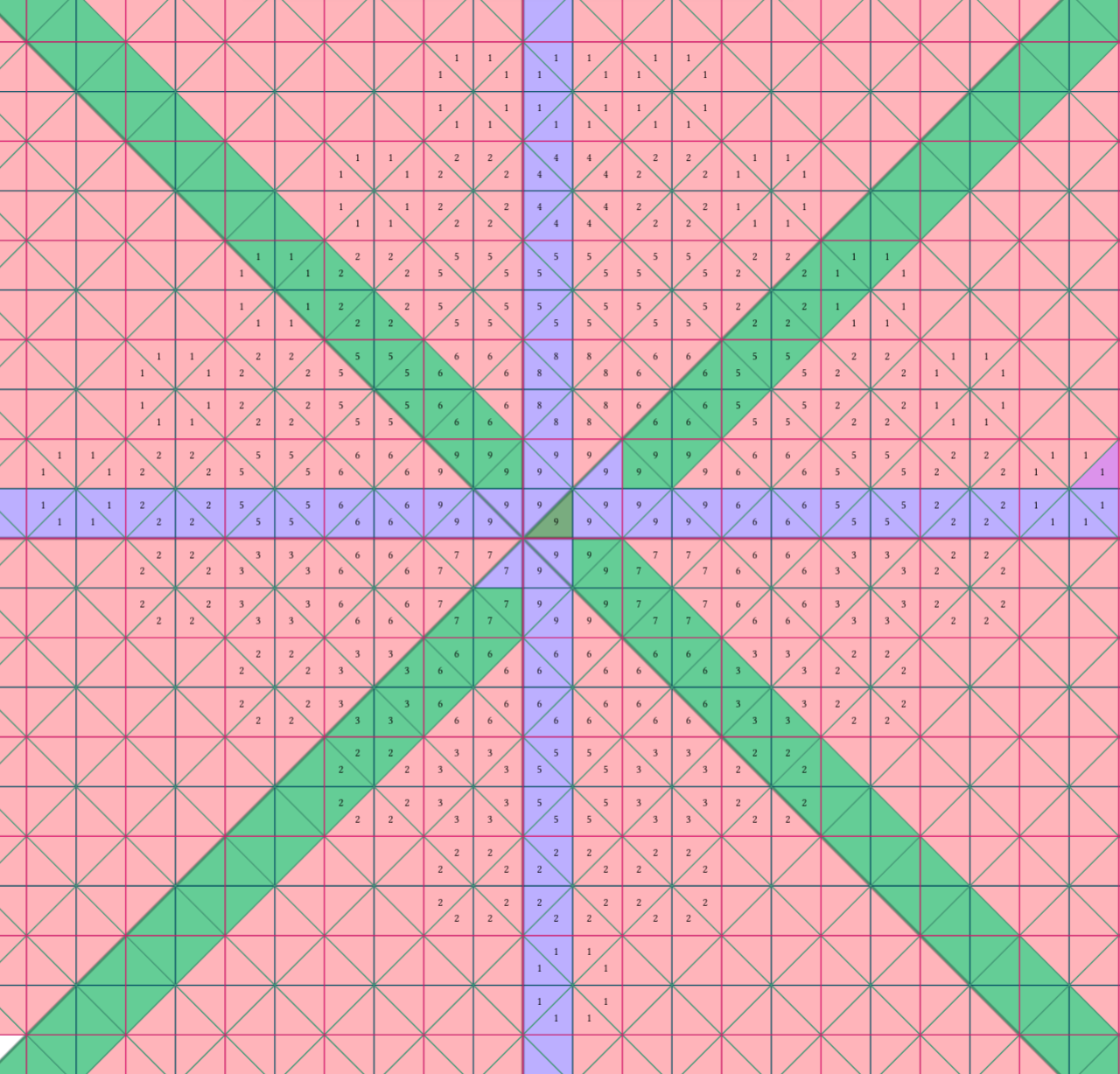}
\caption{The affine \(\widetilde C_2\) alcove picture, with hyperplane colors
\(1=\) green, \(2=\) red, and \(3=\) blue. In addition to the alcoves, the
picture shows the two-sided KL cells: the shaded central triangle is the trivial
cell, i.e. the fundamental alcove; the purple belt is the reduced expression cell, aka the wall cell; the
red cones form the projective cell; and there is one additional green cell.
Each alcove is labeled by information about the corresponding element \(w\in W\):
northwest gives a reduced expression, northeast the number of reduced expressions,
and southwest and southeast the KL polynomials for the shaded alcove at \(v=1\).}
\label{fig:b2-alcoves}
\end{figure}

\begin{Remark}
The calculations in this section were obtained with the code in \cite{COT}.
Some of them can be justified using the formulas of
\cite{BaBiPl-KL-B2}: the formulas for the neighboring rays in the
projective cell are unconditional, while the formulas along the thin wall
are conditional on \cite[Conjecture~6.1]{BaBiPl-KL-B2}. The tensor power
calculation at the end of the section remains experimental.
\end{Remark}

The elements along the wall Ignoring 313) are
\[
3,\ 31,\ 312,\ 3121,\ 31213,\dots,(3121)^m,
(3121)^m3,
(3121)^m31,
(3121)^m312,
(3121)^{m+1},
\dots.
\]
We write \(w_i\) for the wall element of length \(i\), with \(w_0=id\).
As one can see in \autoref{fig:b2-alcoves}, their KL polynomials are already
very large when compared with those of nearby alcoves. Indeed, one gets
\[
\big(\nu^\delta(b_{w_i})\big)_{i\in\Z_{\geq 0}}
=
(1,2,4,8,12,24,36,72,108,216,252,488,532,1016,988,1832,1716,3144,\dots).
\]

Let us compare this with \cite{BaBiPl-KL-B2}. Their generators
\(s_0,s_1,s_2\) correspond to our generators \(3,2,1\), respectively.
Let \(d_j\) be the length-\(j\) prefix of
\[
s_2s_1s_2s_0s_2s_1s_2s_0\cdots,
\qquad
\overline d_j=s_0d_j.
\]
Then
\[
w_i=\overline d_{i-1}
\qquad (i\geq2).
\]
We write \(\vartheta(a,b)\) for the element denoted by \(\theta(a,b)\)
in \cite{BaBiPl-KL-B2}, to avoid confusion with the notation used in
\autoref{sec:sl3}.

The computed values fall into a period-four pattern. More precisely,
assuming \cite[Conjecture~6.1]{BaBiPl-KL-B2}, for \(m\geq1\) one has
\[
\begin{aligned}
\nu^\delta(b_{w_{4m+1}})
&=
4(3m^4+m^2+2),
\\
\nu^\delta(b_{w_{4m+2}})
&=
4(2m^2+1)(m^2+m+1),\\
\nu^\delta(b_{w_{4m+3}})
&=
4(3m^4+6m^3+5m^2+2m+2),\\
\nu^\delta(b_{w_{4m+4}})
&=
4(m^2+m+1)(2m^2+4m+3).
\end{aligned}
\]

Here is how these formulas follow from the conjecture. At \(v=1\), summing
the coefficients in the standard basis is the augmentation map
\[
\varepsilon\colon\Hecke=\R[W]\longrightarrow\R,
\qquad
\varepsilon\big(\sum_{x\in W}a_x\delta_x\big)=\sum_{x\in W}a_x,
\]
which is an algebra homomorphism. Thus
\(\nu^\delta(h)=\varepsilon(h)\). By
\cite[Theorem~1.1 and Corollary~2.5]{BaBiPl-KL-B2},
\[
A_0(j):=\nu^\delta(b_{\vartheta(0,j)})
=
\tfrac{4}{3}(j+1)(j+2)(2j+3)
\]
and
\[
A_1(j):=\nu^\delta(b_{\vartheta(1,j)})
=
\tfrac{16}{3}(j+1)(j+2)(j+3).
\]
Applying \(\varepsilon\) to the four identities in
\cite[Conjecture~6.1(2)]{BaBiPl-KL-B2} gives recurrences for
\[
D_j:=\nu^\delta(b_{d_j}).
\]
For example,
\[
2D_{4m+1}
=
D_{4m+2}
+
\sum_{j=0}^{m-2}A_0(j)
+
3\sum_{j=0}^{m-3}A_0(j).
\]
Together with the other three identities and the initial values
\(D_3=6\) and \(D_4=12\), these determine all \(D_j\).
Applying \(\varepsilon\) to
\cite[Conjecture~6.1(3)]{BaBiPl-KL-B2} then determines
\(\nu^\delta(b_{\overline d_j})\). Substituting the formulas for
\(A_0(j)\) and \(A_1(j)\) and summing gives exactly the four formulas above.

In fact, the same calculation gives quartic quasi-polynomial formulas for
the elements \(d_j\). The other two arms of the thin cell are obtained from
\(d_j\) and \(\overline d_j\) by the diagram automorphism and have the same
values. Thus, conditional on \cite[Conjecture~6.1]{BaBiPl-KL-B2}, every
infinite arm of the thin cell has quartic quasi-polynomial growth.

It is instructive to compare this with two neighboring rays in the projective cell, one above and one below the wall in \autoref{fig:b2-alcoves}. Above the wall,
start at
$3121313$
and then extend on the right by \(2,1,3,1\), repeatedly. Thus we obtain
\[
3121313,\quad
31213132,\quad
312131321,\quad
3121313213,\quad
31213132131,\quad\dots.
\]
Let \(u_i\) denote the resulting element of length \(i\). Then
\[
\big(\nu^\delta(b_{u_i})\big)_{i\in\Z_{\geq 7}}
=
(32,64,96,128,160,320,480,640,448,896,1344,1792,\dots),
\]
and, for \(m\geq 0\) and \(r\in\{0,1,2,3\}\),
\[
\nu^\delta(b_{u_{7+4m+r}})
=
\tfrac{16(r+1)}{3}\cdot(m+1)(m+2)(2m+3).
\]
Below the wall, start at $23121$
and then extend on the right by \(3,1,2,1\), repeatedly. This gives
\[
23121,\quad
231213,\quad
2312131,\quad
23121312,\quad
231213121,\quad\dots.
\]
Let \(v_i\) denote the resulting element of length \(i\). Then
\[
\big(\nu^\delta(b_{v_i})\big)_{i\in\Z_{\geq 5}}
=
(16,32,48,64,80,160,240,320,224,448,672,896,\dots),
\]
and, for \(m\geq 0\) and \(r\in\{0,1,2,3\}\),
\[
\nu^\delta(b_{v_{5+4m+r}})
=
\tfrac{8(r+1)}{3}\cdot(m+1)(m+2)(2m+3).
\]

These two formulas follow unconditionally from
\cite[Theorem~1.1]{BaBiPl-KL-B2}. Indeed, set
\[
y_0=1,\qquad
y_1=s_0,\qquad
y_2=s_0s_2,\qquad
y_3=s_0s_2s_1,
\]
and let \(y'_r\) be obtained by interchanging \(s_0\) and \(s_1\). In the
notation of \cite{BaBiPl-KL-B2}, one has
\[
v_{5+4m+r}=s_0\vartheta(0,m)y_r
\]
and
\[
u_{7+4m+r}
=
s_0s_2s_1\vartheta'(0,m)y'_r.
\]
Moreover,
\[
\begin{aligned}
\nu^\delta(b_{\vartheta(0,m)})
&=
\sum_{\substack{0\leq j\leq m\\j\equiv m\!\!\!\pmod 2}}
8(2j^2+2j+1)\\
&=
\tfrac{4}{3}(m+1)(m+2)(2m+3).
\end{aligned}
\]
After applying the augmentation map, the left factors contribute \(2\)
and \(4\), respectively, while the right factor contributes \(r+1\).
This gives the two displayed formulas. In particular, both neighboring
rays in the projective cell grow cubically, whereas, conditional on
\cite[Conjecture~6.1]{BaBiPl-KL-B2}, the thin wall grows quartically.

\begin{Remark}
Assuming \cite[Conjecture~6.1]{BaBiPl-KL-B2}, the analog of
\autoref{lem:singular rays} is therefore not true outside of the projective
cell in general.
\end{Remark}

Finally, let us return from polynomial growth along rays to tensor powers.
The element \(312\) is a natural affine \(\widetilde C_2\) analog of the
cyclic wall element \(123\) in type \(\widetilde A_2\). Notice that
\(312=\overline d_2\) in the notation above. However, the formulas of
\cite{BaBiPl-KL-B2} describe the standard-basis expansions of individual
KL basis elements and do not give a closed recurrence for multiplication
by \(b_{\overline d_2}\). Thus they do not directly settle the following
tensor-power problem.

Computations give
\begin{gather*}
\bigl(\nuop(b_{312}^n)\bigr)_{n\geq 1}
=
(1,4,18,94,537,3204,19678,123706,793667,5185460,
34435639,\\
232019728,1583495458,
10930233768,76204662146,
\dots).
\end{gather*}
The normalized values
$\nuop(b_{312}^n)/(n^{-2}8^n)$
appear to stabilize rapidly, as shown in \autoref{fig:c2-312-normalized}.
This is good numerical evidence for
\[
\nuop(b_{312}^n)\sim C\cdot n^{-2}\cdot 8^n
\]
for some constant \(C>0\).

\begin{figure}[ht]
\centering
\includegraphics[height=6cm]{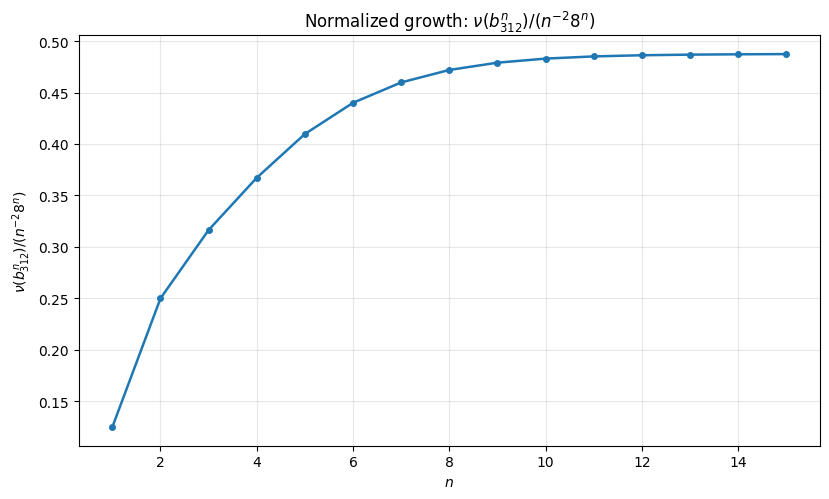}
\caption{The (normalized) sequence $\nu_n$ for $w=312$.}
\label{fig:c2-312-normalized}
\end{figure}

\begin{Remark}
The exponent is the expected one, but we do not attempt to prove this here.
\end{Remark}

\section{Details for Proof of \autoref{prop:uniform6}}\label{S:Details}
\setcounter{subsection}{1}

We now give the bookkeeping details behind \autoref{prop:uniform6}. Below, at one point in the proof of \autoref{lem:theta-w0-bound}, we will return to the $\Z[v,v^{-1}]$-algebra $\Hecke_v(W)$. Recall that this is the unital associative algebra generated by symbols $\{\delta_x : x\in S\}$, subject to the quadratic relation;
\begin{align*}
\delta_x^2 &= (v^{-1}-v)\delta_x + 1,
\end{align*}
and the braid relation;
\begin{align*}
\delta_x\delta_y\delta_x... &= \delta_y\delta_x\delta_y...,
\end{align*}
where there are $m_{x,y}$ terms on each side (here, $m_{x,y}$ is the smallest positive integer such that $(xy)^{m_{x,y}}=id$ in $W$).

Throughout
this section, contrary to \autoref{sec:sl3}, all labels of simple reflections are read modulo \(3\), and we use $s_j$ instead of $j$ for the simple reflections. Furthermore, we use $s,t$ as the generators of $A_2$, and $u$ as the affine reflection. We fix $s = s_1$, $t = s_2$, and $u = s_3$, so that $w_0=sts$. We use the multiplication rules from \cite{LiPa}, recalled in
\autoref{eq:theta4} and \autoref{eq:theta5}. 

\begin{Remark}
Note in the following proofs, if we were to swap the roles of $s$ and $t$ in any given expression, we would get the same results in each case by working with the respective other expression for $b_{w_0}$. Thus, the below cases actually cover all possible $\theta$'s.
\end{Remark}

\begin{Lemma}\label{lem:theta-w0-bound}
In \(\Hecke(\wt A_2)\), one has
\[
\nuop\bigl(b_{\theta(m,n)}b_{w_0}\bigr)\leq 6
\]
for all beyond-the-wall words \(\theta(m,n)\).
\end{Lemma}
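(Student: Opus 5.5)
The plan is a case analysis on the last letter of \(\theta(m,n)\) relative to the finite generators \(s,t\). Write \(b_{w_0}=b_sb_tb_s-b_s=b_tb_sb_t-b_t\) and choose, in each case, the expression that lets us use the doubling rules \autoref{eq:theta4} as often as possible. The word \(\theta(m,n)\) ends in the letter \(2m-2n\). By \autoref{eq:theta4}, right multiplication by \(b_{2m-2n+1}\) or by \(b_{2m-2n-1}\) just doubles \(b_{\theta(m,n)}\). By \autoref{eq:theta5}, right multiplication by \(b_{2m-2n}\) produces the single element \(b_{\theta(m,n)(2m-2n)}\), and a further multiplication by \(b_{2m-2n-1}\) gives the four-term sum. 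Modulo \(3\), the two letters of \(\{s,t\}=\{s_1,s_2\}\) occupy two of the three residues \(2m-2n-1,\,2m-2n,\,2m-2n+1\). So there are three cases according to which residue is the affine letter \(u=s_3\).

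\emph{Case \(u=s_{2m-2n}\).} Then both \(s\) and \(t\) are doubling letters. Here \(b_{\theta(m,n)}b_sb_tb_s=8b_{\theta(m,n)}\) and \(b_{\theta(m,n)}b_s=2b_{\theta(m,n)}\). Hence \(b_{\theta(m,n)}b_{w_0}=6b_{\theta(m,n)}\), and \(\nu=6\).

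\emph{Cases \(u=s_{2m-2n\pm1}\).} Now one finite letter, call it \(x\), equals \(s_{2m-2n}\), and the other, \(y\), is a doubling letter. Use \(b_{w_0}=b_yb_xb_y-b_y\), so that
\[
b_{\theta(m,n)}b_{w_0}=2\,b_{\theta(m,n)}b_xb_y-2\,b_{\theta(m,n)}.
\]
If \(y=s_{2m-2n-1}\), the last rule of \autoref{eq:theta5} gives \(b_{\theta(m,n)}b_xb_y\) as a sum of at most four KL elements, one of which is \(b_{\theta(m,n)}\). The result is therefore
\[
2\bigl(b_{\theta(m,n+1)}+b_{\theta(m+1,n-1)}+b_{\theta(m-1,n)}\bigr),
\]
with \(\nu\le 6\), and this bound holds even when boundary terms vanish.

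If instead \(y=s_{2m-2n+1}\), I would either apply the symmetry remark (swapping the roles of \(s\) and \(t\), and reversing the orientation of the cyclic labelling) to reduce to the previous subcase, or rederive the analogue of the four-term rule. For the latter, reinterpret \(\theta(m,n)\,s_{2m-2n}\) as a \(\theta\)-type word ending in \(s_{2m-2n}\) and apply \autoref{eq:theta5} in mirrored form.

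The main obstacle is this last subcase. The four-term rule in \autoref{eq:theta5} is only stated for the letter \(2m-2n-1\), so one must check that the mirrored product \(b_{\theta(m,n)}b_{2m-2n}b_{2m-2n+1}\) still has at most four terms and contains \(b_{\theta(m,n)}\), so that the subtraction cancels it. To do this I would pass to \(\Hecke_v(W)\) and compute the product with the braid-triplet criterion \autoref{lem:lipa-criterion}. The procedure is to determine which subwords are reduced, identify the top term \(b_{\theta(m,n)(2m-2n)(2m-2n+1)}\), and read off the lower terms from the \(\mu\)-coefficients, using the explicit formulas of \cite{LiPa}. Specialising at \(v=1\) and using nonnegativity then gives \(\nu\le 6\).

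Boundary cases with small \(m\) or \(n\), where some \(\theta\) terms are zero, only decrease the count.
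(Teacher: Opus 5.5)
Your cases $u=s_{2m-2n}$ and $u=s_{2m-2n+1}$ are correct and match the paper's Cases~2 and~1. In the latter, your ordering $b_{w_0}=b_yb_xb_y-b_y$ (double first, then apply the four-term rule) is slightly cleaner than the paper's $b_tb_sb_t-b_t$. The paper extends first, and then has to check with \autoref{lem:lipa-criterion} that the three new $\theta$'s are doubled by the final $b_t$. One small slip: $\theta(m,n)$ ends in $2m-2n+1$, and $2m-2n$ is the letter that extends it. You apply \autoref{eq:theta4} and \autoref{eq:theta5} correctly regardless.

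The gap is the case $u=s_{2m-2n-1}$. This is where the real content of the lemma sits, and you only flag it as the main obstacle.

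\emph{The symmetry shortcut is not available.} The only nontrivial diagram automorphism fixing $w_0$ swaps $s,t$ and fixes $u$. It therefore preserves the position of $u$ among the last three letters, so it maps this case to itself (for mirrored words). Rotations of labels change $w_0$, and inversion turns right multiplication into left multiplication.

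\emph{Reinterpretation does not help either.} \autoref{eq:theta5} cannot be reused on $\theta(m,n)s_{2m-2n}$, because that element is not a $\theta$-word but one of the other beyond-the-wall elements.

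What must be proved, and what the paper proves in its Case~3 (recorded as \autoref{rem:extra-beyond-wall-rule}), is the identity
\[
b_{\theta(m,n)}b_xb_y=b_{\theta(m,n)xy}+b_{\theta(m,n)}+b_{\theta(m,n-1)}+b_{\theta(m-1,n)y},
\qquad x=s_{2m-2n},\ y=s_{2m-2n+1}.
\]
Your fallback plan is the right method, and it is exactly the paper's, but it has to be carried out. The paper works in $\Hecke_v(W)$ with $b_{\theta(m,n)}=\sum_i v^{2i}N_{\theta(m-i,n-i)}$ from \cite{LiPa}, and locates the degree-zero terms coming from $h_{z,\theta(m,n)}\delta_zb_xb_y$. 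There are two sources:
\begin{enumerate}
\item a $v$-coefficient at colength one, which by \cite[Corollary~1.3]{LiPa} and \autoref{lem:lipa-criterion} contributes only $\theta(m-1,n)y$;
\item a $v^2$-coefficient at colength two, which contributes only $\theta(m,n-1)$.
\end{enumerate}
This count is essential, not a formality. Since $b_{\theta(m,n)}b_{w_0}=2b_{\theta(m,n)}b_xb_y-2b_{\theta(m,n)}$, a single further summand in $b_{\theta(m,n)}b_xb_y$ would already push $\nu$ to $8$.
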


\begin{proof}
We work in $\Hecke(\widetilde{A_2})$. Recall that
\[
\theta(m,n)
=
123\cdots(2m+1)(2m+2)(2m+1)2m\cdots(2m-2n+1),
\]
and that, in finite type \(A_2\),
\[
b_{w_0}=b_tb_sb_t-b_t=b_sb_tb_s-b_s.
\]
We consider three cases, depending on the final simple reflection of
\(\theta(m,n)\).

\smallskip

\noindent\emph{Case 1.}
Suppose \(\theta(m,n)\) ends in \(u\), i.e. 
\[
u=s_{2m-2n+1} = s_3,
\]
so $2m-2n+1$ is a multiple of three. 
Hence,
\[
s= s_1 = s_{2m-2n-1},
\quad
t=s_2=s_{2m-2n}.
\]
Then, $\theta(m,n) = ...tsu$. Now,
\[
b_{\theta(m,n)}b_{w_0}
=
b_{\theta(m,n)}(b_tb_sb_t-b_t).
\]
By \autoref{eq:theta5},
\[
b_{\theta(m,n)}b_tb_s
=
b_{\theta(m,n+1)}
+
b_{\theta(m,n)}
+
b_{\theta(m+1,n-1)}
+
b_{\theta(m-1,n)}.
\]
We now multiply this expression on the right by \(b_t=b_{s_{2m-2n}}\). The three
terms different from \(\theta(m,n)\) all end in \(s\), because
\[
\begin{aligned}
2m-2(n+1)+1 &\equiv 2m-2n-1,\\
2(m+1)-2(n-1)+1 &\equiv 2m-2n-1,\\
2(m-1)-2n+1 &\equiv 2m-2n-1.
\end{aligned}
\]
Moreover, in each case the final \(s\) is preceded by \(t\). By definition, any $\theta(x,y)$ has exactly one braid triplet in it. Multiplication on the RHS by $t$ generates a second braid triplet, at position $2m + 2n + 3$. The induced braid distance is then $2m + 2n + 3 - (2m + 2) - 1$, which is even. Thus, by \autoref{lem:lipa-criterion}, $\theta(x,y)t$ is non-reduced, so $\theta(x,y)$ is a right-longest representative of $W/\{1,t\}$. Hence
\[
b_{\theta(x,y)}b_t=2b_{\theta(x,y)}
\]
for
\[
(x,y)\in\{(m,n+1),(m+1,n-1),(m-1,n)\}.
\]
Therefore
\[
\begin{aligned}
b_{\theta(m,n)}(b_tb_sb_t-b_t)
&=
\bigl(
b_{\theta(m,n+1)}
+
b_{\theta(m+1,n-1)}
+
b_{\theta(m-1,n)}
+
b_{\theta(m,n)}
\bigr)b_t
-
b_{\theta(m,n)}b_t
\\
&=
2\bigl(
b_{\theta(m,n+1)}
+
b_{\theta(m+1,n-1)}
+
b_{\theta(m-1,n)}
\bigr).
\end{aligned}
\]
Thus
\[
\nuop\bigl(b_{\theta(m,n)}b_{w_0}\bigr)\leq 6.
\]

\smallskip

\noindent\emph{Case 2.}
Suppose now that $2m-2n$ is a multiple of three, so that $u = s_{2m-2n}$. Then, $s = s_1 = s_{2m-2n+1}$, and $t=s_2=s_{2m-2n-1}$. Then, $\theta(m,n) = ...uts$. Following the multiplication rules;
\[
b_{\theta(m,n)}b_s=2b_{\theta(m,n)}.
\]
Also, multiplying \(\theta(m,n)\) on the right by \(t\) creates a braid relation
of the form \(tst\). The corresponding braid distance is
\[
2m+2n+3-(2m+2)-1,
\]
which is even. Hence \(\theta(m,n)t<\theta(m,n)\), and therefore
\[
b_{\theta(m,n)}b_t=2b_{\theta(m,n)}.
\]
It follows that
\[
\begin{aligned}
b_{\theta(m,n)}(b_sb_tb_s-b_s)
&=
8b_{\theta(m,n)}-2b_{\theta(m,n)}
\\
&=
6b_{\theta(m,n)}.
\end{aligned}
\]
Thus again
\[
\nuop\bigl(b_{\theta(m,n)}b_{w_0}\bigr)\leq 6.
\]

\smallskip

\noindent\emph{Case 3.}
Suppose now that $2m-2n-1$ is a multiple of three, so that $u = s_{2m-2n-1}$, $s = s_1 = s_{2m-2n}$, and $t = s_2 = s_{2m-2n+1}$. Then, $\theta(m,n) = ...sut$. 
First,
\[
b_{\theta(m,n)}b_t=2b_{\theta(m,n)}.
\]
Moreover, by \autoref{eq:theta5},
\[
b_{\theta(m,n)}b_s= b_{\theta(m,n)s}.
\]
So, we have \[b_{\theta(m,n)}b_tb_s = 2b_{\theta(m,n)s}.\] 
It remains to analyze multiplication by \(b_t\). This is the only bookkeeping
heavy case, and here we will change to working in $\Hecke_v(\widetilde{A_2})$ for a bit.

We claim that
\[
b_{\theta(m,n)}b_sb_t
=
b_{\theta(m,n)st}
+
b_{\theta(m,n)}
+
b_{\theta(m,n-1)}
+
b_{\theta(m-1,n)t}.
\]
The first summand occurs because \(\theta(m,n)st\) is reduced: the two braid
triplets occur at positions \(2m+2\) and \(2m+2n+4\), and their distance is thus $2m + 2n + 4 - (2m + 2) - 1$, which is
odd.

To find the remaining summands, write
\[
b_{\theta(m,n)}
=
\delta_{\theta(m,n)}
+
\sum_{z<\theta(m,n)}h_{z,\theta(m,n)}\delta_z,
\]
where $h_{x,\theta(m,n)}$ are polynomials in $v$. Then
\[
\begin{aligned}
b_{\theta(m,n)}b_s
&=
\delta_{\theta(m,n)s}
+
v\delta_{\theta(m,n)}
+
\sum_{z<\theta(m,n)}h_{z,\theta(m,n)}\delta_z b_s,
\\
b_{\theta(m,n)}b_sb_t
&=
\delta_{\theta(m,n)st}
+ 
v\delta_{\theta(m,n)s} 
+
v(v^{-1}-v)\delta_{\theta(m,n)}
+
v\delta_{\theta(m,n)t}\\&+
\sum_{z<\theta(m,n)}h_{z,\theta(m,n)}\delta_z b_sb_t 
\end{aligned}
\]
Thus \(b_{\theta(m,n)}\) is also a summand.

The remaining possibilities require us to observe when we can get a constant (no $v$-coefficient) in $h_{z,\theta(m,n)}\delta_zb_sb_t$. This leads us to the following conditions:
\[
v \text{ occurs in } h_{ys,\theta(m,n)}
\quad\text{and}\quad
ys<y>yt,
\]
or
\[
v^2 \text{ occurs in } h_{y,\theta(m,n)}
\quad\text{and}\quad
yst<ys<y.
\]

For the first condition, use the formula \(A_{m,n}\) from \cite{LiPa}:
\[
b_{\theta(m,n)}
=
\sum_{i=0}^{\min(m,n)}v^{2i}N_{\theta(m-i,n-i)},
\quad
N_x=\sum_{z\leq x}v^{\ell(x)-\ell(z)}\delta_z.
\]
For \(v\) to occur in \(h_{ys,\theta(m,n)}\), we need \(i=0\) and
\[
\ell(\theta(m,n))-\ell(ys)=1.
\]
By \cite[Corollary~1.3]{LiPa}, the relevant elements are obtained by removing one letter from $\theta(m,n)$ to obtain one of the following words:
\[
\begin{aligned}
&23\cdots(2m+1)(2m+2)(2m+1)\cdots(2m-2n+1),\\
&123\cdots 2m(2m+2)(2m+1)\cdots(2m-2n+1),\\
&123\cdots(2m+1)(2m+2)2m\cdots(2m-2n+1),\\
&123\cdots(2m+1)(2m+2)(2m+1)\cdots(2m-2n+2).
\end{aligned}
\]
Among these, only the second possibility contributes: it is the one for which
right multiplication by \(t\) is nonreduced. The corresponding summand is
\[
b_{\theta(m-1,n)t}.
\]

For the second condition, we look for \(v^2\) in \(h_{y,\theta(m,n)}\). Thus
either \(i=1\) and \(\ell(y)=\ell(\theta(m,n))\), or \(i=0\) and
\[
\ell(y)=\ell(\theta(m,n))-2.
\]
The first alternative does not contribute: the candidate \(\theta(m-1,n-1)\)
ends in \(t\), and right multiplication by \(s\) is reduced.

For the second alternative, one obtains \(y\) by deleting two simple reflections
from \(\theta(m,n)\). If neither of the final two letters is removed, then right
multiplication by \(s\) is reduced, so there is no contribution. Thus one of the
final two letters must be removed. A direct check using
\autoref{lem:lipa-criterion} shows that the only contributing case is obtained
by removing the final \(t\) and then the final \(u\). This gives
\[
y=\theta(m,n-1),
\]
and one has
\[
y>ys>yst.
\]
Thus \(b_{\theta(m,n-1)}\) is the final extra summand.

Combining the four summands gives
\[
b_{\theta(m,n)}b_sb_t
=
b_{\theta(m,n)st}
+
b_{\theta(m,n)}
+
b_{\theta(m,n-1)}
+
b_{\theta(m-1,n)t}.
\]
We now work in $\Hecke(\widetilde{A_2})$ again. Since \(b_{\theta(m,n)}b_t=2b_{\theta(m,n)}\), we obtain
\[
\begin{aligned}
b_{\theta(m,n)}b_tb_sb_t
&=
2b_{\theta(m,n)}b_sb_t
\\
&=
2\bigl(
b_{\theta(m,n)st}
+
b_{\theta(m,n)}
+
b_{\theta(m,n-1)}
+
b_{\theta(m-1,n)t}
\bigr).
\end{aligned}
\]
Subtracting \(b_{\theta(m,n)}b_t=2b_{\theta(m,n)}\), we get
\[
b_{\theta(m,n)}(b_tb_sb_t-b_t)
=
2b_{\theta(m,n)st}
+
2b_{\theta(m,n-1)}
+
2b_{\theta(m-1,n)t}.
\]
Hence
\[
\nuop\bigl(b_{\theta(m,n)}b_{w_0}\bigr)\leq 6.
\]
This completes the proof.
\end{proof}

The calculation in the third case also gives the following multiplication rule.

\begin{Remark}\label{rem:extra-beyond-wall-rule}
One has
\[
\begin{aligned}
b_{\theta(m,n)}b_{s_{2m-2n}}b_{s_{2m-2n+1}}
&=
b_{\theta(m,n)s_{2m-2n}s_{2m-2n+1}}
+
b_{\theta(m,n)}
\\
&\quad
+
b_{\theta(m,n-1)}
+
b_{\theta(m-1,n)s_{2m-2n+1}},
\end{aligned}
\]
as one can check.
\end{Remark}

\begin{Lemma}\label{cor:beyond-wall-w0-bound}
For every beyond-the-wall element \(w\) in type \(\wt A_2\), one has
\[
1\leq \nuop\bigl(b_wb_{w_0}\bigr)\leq 12.
\]
\end{Lemma}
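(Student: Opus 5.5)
The lower bound is the easy half. Since $b_w$ and $b_{w_0}$ are KL basis elements and the structure constants $p_{w,w_0}^z$ are nonnegative integers, it suffices that $b_wb_{w_0}\neq 0$. This holds because $\nu^\delta(b_wb_{w_0})=\nu^\delta(b_w)\cdot 6>0$ by \autoref{nu_del ringhom}. Hence at least one KL summand occurs, and $\nuop(b_wb_{w_0})\geq 1$.

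For the upper bound I would use the description of the beyond-the-wall region from \cite{LiPa}: every beyond-the-wall element is, up to the symmetries of relabeling the generators cyclically and reversing words, either some $\theta(m,n)$ or of the form $\theta(m,n)s_{2m-2n}$. The finitely many exceptional small elements near the wall (such as $y_j,z_j$ at small $j$) would be checked directly, or absorbed into these families. For $w=\theta(m,n)$, \autoref{lem:theta-w0-bound} already gives $\nuop(b_wb_{w_0})\le 6\le 12$.

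For $w=\theta(m,n)s_{2m-2n}$, I would use \autoref{eq:theta5} to write $b_w=b_{\theta(m,n)}b_{s_{2m-2n}}$, so that
\[
b_wb_{w_0}=b_{\theta(m,n)}\bigl(b_{s_{2m-2n}}b_{w_0}\bigr).
\]
There are two cases. If $s_{2m-2n}\in\{s,t\}$, then $b_{s_{2m-2n}}b_{w_0}=2b_{w_0}$, so $b_wb_{w_0}=2b_{\theta(m,n)}b_{w_0}$. \autoref{lem:theta-w0-bound} then gives at most $2\cdot 6=12$ summands. This is the source of the doubling mentioned in the proof of \autoref{prop:uniform6}. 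The remaining case is $s_{2m-2n}=u$, which is the main obstacle. Here I would not factor through $b_u$. Instead I would expand $b_{w_0}=b_sb_tb_s-b_s$ (or its $s\leftrightarrow t$ variant) and multiply $b_w$ on the right letter by letter. At each step I would apply the rules \autoref{eq:theta4}, \autoref{eq:theta5} and \autoref{rem:extra-beyond-wall-rule}, since every intermediate term is again of the form $\theta(x,y)$ or $\theta(x,y)s_{2x-2y}$. As in Case 3 of \autoref{lem:theta-w0-bound}, I would track which letters act by doubling: these are the letters giving a quadratic relation, or a braid triplet at even distance by \autoref{lem:lipa-criterion}. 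I would then subtract the $b_wb_s$ contribution, and count that at most $12$ terms survive, each with coefficient at most $2$ on at most $6$ distinct elements.

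If the four-term rule in \autoref{eq:theta5} and \autoref{rem:extra-beyond-wall-rule} do not directly cover some intermediate product, I would first try the same $v$-degree bookkeeping used in Case 3. This means identifying the extra summands by locating $v^1$ and $v^2$ terms in the \cite{LiPa} formula $b_{\theta(m,n)}=\sum_i v^{2i}N_{\theta(m-i,n-i)}$ and checking them with \autoref{lem:lipa-criterion}.
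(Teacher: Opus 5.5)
Your overall route is the paper's. You get the lower bound from positivity and treat $\theta(m,n)$ by \autoref{lem:theta-w0-bound}. When $s_{2m-2n}\in\{s,t\}$ you use $b_{\theta(m,n)s_{2m-2n}}b_{w_0}=2b_{\theta(m,n)}b_{w_0}$, which is the paper's Cases 1 and 2, stated more cleanly. When $s_{2m-2n}=u$ you expand $b_{w_0}$ and apply the Libedinsky--Patimo local rules. Two steps need repair.

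\textbf{Reversal is not a symmetry here.} Reversing words turns $b_wb_{w_0}$ into the left product $b_{w_0}b_{w^{-1}}$. (Cyclic rotation also moves $W_f$; it is harmless only because your case split is by which final letters are finite.) So $3\theta(m,n)$ (e.g.\ $3121$) is reached only through an invalid reduction, and $3\theta(m,n)s_{2m-2n}$ (e.g.\ $31213$) is not reached at all. Both are easy to handle. By \autoref{eq:theta5}, $b_{3x}=b_3b_x$ for $x=\theta(m,n)$ and for $x=\theta(m,n)s_{2m-2n}$, so $b_{3x}b_{w_0}=b_3(b_xb_{w_0})$. Left multiplication by $b_3$ sends each summand $b_{\theta(x',y')}$ (or $b_{\theta(x',y')s_{2x'-2y'}}$) to a single KL basis element, so the bounds $6$ and $12$ carry over. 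The paper also glosses over these elements.

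\textbf{The count in the case $s_{2m-2n}=u$.} Your predicted shape, ``coefficient at most $2$ on at most $6$ distinct elements'', is wrong, so the count cannot rest on it. Already $b_{1213}b_{w_0}=b_{121}b_3b_{121}=b_{1213121}+4b_{121}$: this product lies in $\Hecke_{\mathrm{sph}}$, and comparing $\nu^\delta$ gives $72=48+24$. For $m,n\geq 2$ one gets $6b_{\theta(m,n)}$ plus six further $\theta$'s with coefficient $1$, so the bound $12$ is attained.

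Count by linearity instead. Let $a,c$ be the penultimate and last letters of $\theta(m,n)$, and write $b_{w_0}=b_ab_cb_a-b_a$.
\begin{enumerate}
\item $b_wb_a=b_{\theta(m,n)}b_ub_a$ has $k\leq 4$ terms, by the four-term rule in \autoref{eq:theta5}.
\item Any letter acts on a $b_{\theta(x,y)}$ either by doubling or by extension (\autoref{eq:theta4}, \autoref{eq:theta5}).
\item A further letter on an extended element gives at most four terms, by the four-term rule or \autoref{rem:extra-beyond-wall-rule}.
\end{enumerate}
Hence each of the $k$ terms contributes at most $4$ to $\nuop(b_wb_ab_cb_a)$, and $\nuop(b_wb_{w_0})\leq 4k-k\leq 12$.

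This is the paper's $16-4$ count. The paper instead starts with $b_ub_c$ and applies \autoref{rem:extra-beyond-wall-rule} first, which requires all four of its outputs to be $\theta$'s. In your order only the number of terms in that rule is used. This matters because, as printed, the rule's last term $\theta(m-1,n)s_{2m-2n+1}$ should be $\theta(m-1,n+1)$. In Case 3 of \autoref{lem:theta-w0-bound} the printed element is not right $W_f$-longest, so it cannot occur in $b_{\theta(m,n)}b_{w_0}$.
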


\begin{proof}
By \autoref{lem:theta-w0-bound}, the estimate is already proved for
\(w=\theta(m,n)\). It remains to consider the other beyond-the-wall basis
elements, which are obtained by multiplying \(\theta(m,n)\) on the right by one
simple reflection.

\smallskip

\noindent\emph{Case 1.}
Suppose that \(\theta(m,n)\) ends in \(u\), thus $\theta(m,n) = ...stu$. From this,
\(\theta(m,n)s\) is reduced. Then
\[
\begin{aligned}
b_{\theta(m,n)s}b_{w_0}
&=
b_{\theta(m,n)}b_s(b_sb_tb_s-b_s)
\\
&=
2b_{\theta(m,n)}b_{w_0}.
\end{aligned}
\]
The same argument applies after interchanging \(t\) and \(s\).

\smallskip

\noindent\emph{Case 2.}
Suppose that $u = s_{2m-2n}$, and so $\theta(m,n) = ...tus$. Then
\[
\begin{aligned}
b_{\theta(m,n)t}b_{w_0}
&=
b_{\theta(m,n)}b_s(b_sb_tb_s-b_s)
\\
&=
2b_{\theta(m,n)}b_{w_0}.
\end{aligned}
\]

\smallskip

\noindent\emph{Case 3.}
Suppose that \(\theta(m,n)\) ends in \(st\). We need to estimate
\(b_{\theta(m,n)u}b_{w_0}\). Using \autoref{eq:theta5} and
\autoref{rem:extra-beyond-wall-rule}, we get
\[
\begin{aligned}
b_{\theta(m,n)u}b_{w_0}
&=
b_{\theta(m,n)}b_u(b_sb_tb_s-b_s)
\\
&=
b_{\theta(m,n)}b_{s_{2m-2n}}b_{s_{2m-2n+1}}(b_tb_s-1)
\\
&=
\bigl(
b_{\theta(m,n)s_{2m-2n}s_{2m-2n+1}}
+
b_{\theta(m,n)}
+
b_{\theta(m,n-1)}
+
b_{\theta(m-1,n)s_{2m-2n+1}}
\bigr)(b_tb_s-1).
\end{aligned}
\]
For each \(\theta(x,y)\neq\theta(m,n)\) appearing in this expression, the word
\(\theta(x,y)\) ends in \(us\). Hence
\[
t=s_{2x-2y},
\quad
s=s_{2x-2y+1},
\]
and \autoref{rem:extra-beyond-wall-rule} gives at most four summands after
multiplication by \(b_tb_s\). The term \(b_{\theta(m,n)}b_tb_s\) contributes at
most four summands as well. After subtracting the original four summands, we get
at most twelve summands in total.

The remaining cases are obtained from these by rotating the labels. Hence
\[
\nuop\bigl(b_wb_{w_0}\bigr)\leq 12
\]
for all beyond-the-wall elements \(w\). The lower bound is immediate.
\end{proof}

\end{document}